\author{Axel Parmentier$^1$, Victor Cohen$^1$, Vincent Leclère$^1$, Guillaume Obozinski$^2$, Joseph Salmon$^3$}
\title{Integer programming on the junction tree polytope for influence diagrams}
\date{%
    $^1$Universit\'e Paris-Est, CERMICS (ENPC), F-77455 Marne-la-Vall\'ee, France\\%
    $^2$ Swiss Data Science Center, EPFL \& ETH Zürich, Switzerland \\%
    $^3$ IMAG, Univ Montpellier, CNRS, Montpellier, France \\[2ex]%
    \today}
\begin{document}
\maketitle

\begin{abstract}
 % In this paper we address stochastic optimization problems with structured information and discrete decision variables, through mixed integer linear (MILP) reformulations.
 % The framework includes in particular decomposable and partially observable Markov decision processes.
 %   % The maximization of expected utility (MEU) problems over influence diagrams (ID) are a class of stochastic optimization problems that include coupled (partially observable) Markov decision processes.
 %   These problems are difficult to optimize and the usual approach consists in finding a local optimum, without bounds guarantees.
 %   To derive our MILP formulation we extend the notion of junction tree from probabilistic graphical model theory.
 %   Further, we introduce efficient valid cuts obtained from independance condition between decisions.
 %   We show that the so called soluble influence diagrams are such that the linear relaxation of our MILP formulation is tight.
 %   We end with numerical results showing the efficiency of the proposed valid cuts.

{{\bf Keywords}: {\em Influence diagrams, Partially Observed Markov Decision Processes, Probabilistic graphical models, Linear Programming}.}\\[2em]

Influence Diagrams (ID) are a flexible tool to represent discrete stochastic optimization problems, including Markov Decision Process (MDP) and Partially Observable MDP as standard examples.
More precisely, given random variables considered as vertices of an acyclic digraph, a probabilistic graphical model defines a joint distribution via the conditional distributions of vertices given their parents.
In ID, the random variables are represented by a probabilistic graphical model whose vertices are partitioned into three types : chance, decision and utility vertices.
The user chooses the distribution of the decision vertices conditionally to their parents in order to maximize the expected utility.
Leveraging the notion of rooted junction tree, we present a mixed integer linear formulation for solving an ID, as well as valid inequalities, which lead to a computationally efficient algorithm.
We also show that the linear relaxation yields an optimal integer solution for instances that can be solved by the ``single policy update'', the default algorithm for addressing IDs.
\end{abstract}

% \tableofcontents

%!TEX root=../main_MP.tex

% \inlAP{To DO: rename graph $H$ the augmented (causal) graph, which might be written $\hat{G}$}

% \inlAP{TO DO: rename LIMID -> influence diagram}

% \vlil{Réduire le free de C free}

% \inlVC{TO DO :
% \begin{itemize}
% \item Vincent writes the first draft of the introduction
% \item Victor writes a first draft of the literature review
% \item Guillaume reviews section 2 : notations, RJT presentation, proposition 4 and 5 (proofs) :algorithm and proposition.
% \item Add a running example : consider two small examples, write the NLP, the MILP. Consider a soluble case where a LP solves the \limid and a non-soluble example where
% \item Section 3 needs to be read
% \item Axel adds the link between soluble and soluble relaxations and the proofs.
% \item Show that the set of measures on the PGM is convex iff the graph is soluble
% \end{itemize}
% }

\section{Introduction} % (fold)
\label{sec:introduction}

In this paper we want to address stochastic optimization problems with structured information and discrete decision variables, via mixed integer linear reformulations.
We start by recalling the framework of influence diagrams (more details can be found in~\cite[Chapter 23]{koller2009probabilistic}), and present the classical linear formulation for some special cases.

% \begin{itemize}
% 	\item Literature
% 	\item Contributions :
% 	\begin{itemize}{}
% 		\item Exact MILP formulation for LIMIDS (we can solve exactly the Examples)
% 	 	\item Cases where MILP is LP
% 	 	\item Exact cuts for the MILP obtained from probabilistic graphical model theory
% 	 \end{itemize}
% 	 \item Outline
% 	 \item Notation
% \end{itemize}

\subsection{The framework of parametrized influence diagram}
\label{subsec:framework}
Let $G=(V,E)$ be a directed graph, and, for each vertex $v$ in $V$, let $X_v$ be a random variable taking value in a finite state space $\calX_v$. For any $C \subset V,$ let $X_C$ denote $(X_v)_{v \in C}$ and $\calX_C$ be the cartesian product $\calX_C=\prod_{v \in C} \calX_v.$
We say that the distribution of the random vector $X_V$ factorizes as a \emph{directed graphical model} on $G$ if, for all $x_V \in \calX_V$, we have
\begin{equation}\label{eq:directedGraphicalModel}
	\bbP(X_V = x_V) = \prod_{v\in V} p_{v|\prt{v}}(x_v|x_{\prt{v}}),
\end{equation}
where $\prt{v}$ is the set of parents of $v$, that is, the set of vertices $u$ such that $(u,v)$ belongs to $E$, and $p_{v|\prt{v}}(x_v|x_{\prt{v}}) = \bbP(X_v = x_v|X_{\prt{v}} = x_{\prt{v}})$.
Further, given an arbitrary collection of conditional distributions
$\left\{p_{v|\prt{v}}\right\}_{v\in V}$,
Equation~\eqref{eq:directedGraphicalModel} uniquely defines a probability distribution on $\calX_V$.

Let $(\Va, \Vc, \Vl)$
%\jo{now the letters used seems meaningless for the exponents: ``a'' should be ``d'',r could be u? potentional clash with $\detpol$ to solve though} 
be a partition of $V$ where $\Vc$ is the set of \emph{chances vertices}, $\Va$ is the set of \emph{decision vertices}, and $\Vl$ is the set of \emph{utility vertices} (the ones with no descendants). For ease of notation we denote $\Vs = \Vc \cup \Vl$.
Letters $\mathrm{a}$, $\mathrm{r}$, and $\mathrm{s}$ respectively stand for action, reward, and state in $\Va$, $\Vr$, and $\Vs$. %, and are used to avoid conflict of notations.
We say that $G=(\Vs, \Va, E)$ is an \emph{Influence Diagram} (ID).
Consider a set of conditional distributions $\mathfrak{p} = \left\{ p_{v|\prt{v}}\right\}_{v\in\Vc \cup \Vl}$, and a collection of \emph{reward functions} $r = \{r_v\}_{v\in \Vl}$, with $r_v : \calX_v \rightarrow \bbR$.
Then we call $(G,\calX_V,\mathfrak{p},r)$ a \emph{Parametrized Influence Diagram} (PID).
We will sometimes refer to the parameters $(\calX_V,\mathfrak{p},r)$ by $\rho$ for conciseness.

Let $\Delta_v$ denote the set of conditional distributions $\delta_{v|\prt{v}}$ on $\calX_v$ given $\calX_{\prt{v}}$.
Given the set of conditional distributions $\mathfrak{p}
%= \left\{ p(x_v | x_{\prt{v}})\right\}_{\Vs \cup \Vl}
$,
a \emph{policy} $\delta$ in $\Delta = \prod_{v \in \Va} \Delta_v$,
%defined by $\delta = (\delta_{v|\prt{v}})_{v\in \Va}$,
uniquely defines a distribution $\bbP_\delta$ on $\calX_V$ through
\begin{equation}\label{eq:probabilityDistributionGivenPolicy}
	\bbP_{\delta}(X_V = x_V) = \prod_{v\in \Vs }p_{v|\prt{v}}(x_v|x_{\prt{v}}) \prod_{v\in \Va} \delta_{v|\prt{v}}(x_v|x_{\prt{v}}).
\end{equation}
Let $\bbE_{\delta}$ denote the corresponding expectation.
The \emph{Maximum Expected Utility} (MEU) problem associated to the
PID $(G,\calX_V,\mathfrak{p},r)$ is the maximization problem
\begin{equation}\label{pb:LIMID}
	\max_{\delta \in \Delta} \quad \bbE_{\delta}\Bgp{\sum_{v\in \Vl} r_v(X_v)} .
	% = \min_{\delta_v\in \Delta_v} \sum_{u \in \Vl} \sum_{x_{V\backslash u}} \ell_u(x_u) \prod_{v\in \Vs \cup \Vl}p_{v|\prt{v}}(x_{\fa{v}}) \prod_{v\in \Va} \delta_{v|\prt{v}}(x_{\fa{v}}),
\end{equation}
A \emph{deterministic policy} $\delta \in \detpol \subset \Delta$, is such that for every $v \in \Va$, and any $x_v,x_{\prt{v}} \in \calX_{v} \times \calX_{\prt{v}}$, $\delta_{v|\prt{v}}(x_v | x_{\prt{v}})$ is a Dirac measure.
It is well known that there always exists an optimal solution to MEU~\eqref{pb:LIMID} that is deterministic (see \eg \citep[Lemma C.1]{liu2014reasoning} for a proof).
% in the set $\detpol \subset \Delta$ of deterministic policies,
% \begin{equation}\label{eq:deterministic}
% 	\min_{\delta \in \Delta} \bbE_{\delta}\Bgp{\sum_{v\in \Vl} \ell_v(X_v)} = \min_{\delta \in \detpol} \bbE_{\delta}\Bgp{\sum_{v\in \Vl} \ell_v(X_v)}
% \end{equation}
%  .

We conclude this section with some classical examples of IDs, shown in~\Cref{fig:LIMIDexamples}.
\begin{ex}
\label{ex:POMDP}
    Consider a maintenance problem where at time $t$ a machine is in state $s_t$.
    The action $a_t$ taken by the decision maker according to the current state is typically maintaining it (which is costly) or not (which increases the probability of failure).
    State and decision together lead to a new (random) state $s_{t+1}$, and the triple $(s_{t},a_t,s_{t+1})$ induces a reward $r_t$.
    This is an example of a Markov decision process (MDP) which is probably the simplest ID,
    represented in \Cref{fig:MDP_ex}.
% In this example, we consider a four stage problem. At each stage $t$ we have a (known) state $s_t$, according to which we take an action $a_t$, it transits to the next state $s_{t+1}$ and gives a reward $r_t$.

    In practice, the actual state $s_t$ of the machine is often not known, but we only have some observation $o_t$ carrying partial information about the state,
    which leads to a more complex ID known as a partially observed Markov decision process (POMDP).
    In theory, an optimal decision should be taken knowing all past observations and decisions (which is the \emph{perfect recall} case).
    However, this would lead to policies living in spaces of exponentially large dimension and untractable MEU problems.
     % to untractable decision strategies \comGO{Do we mean untractable decision strategies here or untractable MEU problems?} which require long memory\comGO{It is not really the memory which is a problem here, right?}.
    It is thus common to restrict the decision $a_t$ to be made only based on observation $o_t$, 
     as illustrated in~\Cref{fig:POMDP_ex}.
    % In a POMDP, we do not have anymore access to the state $s_t$ when taking the decision $a_t$, but only to the historic of partial observations $o_t$ obtained from actions $a_t$.
\end{ex}
\begin{ex}
\label{ex:Chess}
    Consider two chess players : Bob and Alice.
They are used to play chess and for each game they bet a symbolic coin.
However, they can refuse to play.
Suppose that Alice wants to play chess every day.
On the day $t$, she has a current confidence level $s_t$.
The day of the game, her current mental fitness is denoted $o_t$.
When Bob meets with Alice, he makes the decision to play depending on her demeanor, denoted $u_t$. % attitude and her appearance of the day\comGO{demeanor?}, denoted $u_t$.
Then Bob can accept or decline the challenge, and his decision is denoted $a_t$.
Let $v_t$ denote the winner (getting a reward $r_t$). If Bob declines the challenge, there is no winner and no reward. Then, Alice's next confidence level is affected by the result of the game and her previous confidence level.
This stochastic decision problem can be modeled by an influence diagram as shown in \Cref{fig:example_chess}.

\end{ex}
% \jo{I would rather see a paragraph for the first exemple and one for chess, being clearly visible in the text (env example may be?) }

%\jo{``stories'' describing the three cases would be helpful to better understand the concept.}
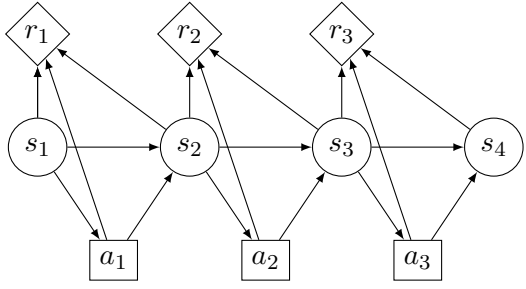
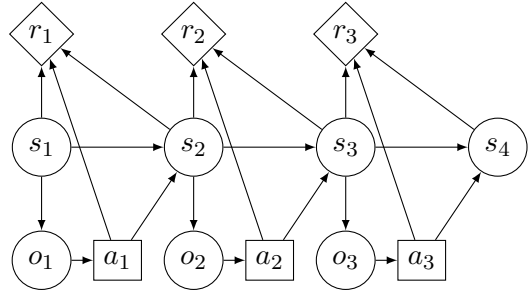
\begin{figure}
\begin{subfigure}{0.43\textwidth}
\centering
\begin{tikzpicture}

\def\h{1.5}

\node[sta] (s1) at (0,2*\h) {$s_1$};
% \node[sta] (o1) at (0,1*\h) {$o_1$};
\node[uti] (c1) at (0,3*\h) {$r_1$};
\node[act] (a1) at (1,1*\h) {$a_1$};

\node[sta] (s2) at (2,2*\h) {$s_2$};
% \node[sta] (o2) at (2,1*\h) {$o_2$};
\node[uti] (c2) at (2,3*\h) {$r_2$};
\node[act] (a2) at (3,1*\h) {$a_2$};

\node[sta] (s3) at (4,2*\h) {$s_3$};
% \node[sta] (o3) at (4,1*\h) {$o_3$};
\node[uti] (c3) at (4,3*\h) {$r_3$};
\node[act] (a3) at (5,1*\h) {$a_3$};

\node[sta] (s4) at (6,2*\h) {$s_4$};
% \node[uti] (c4) at (6,3*\h) {$r_4$};
% \node[sta] (o4) at (6,1*\h) {$o_4$};

\node (tar) at (3,0.6*\h) {};

\draw[arc] (s1) -- (s2);
\draw[arc] (s1) -- (c1);
\draw[arc] (s2) -- (c1);
\draw[arc] (a1) -- (c1);
\draw[arc] (s1) -- (a1);
\draw[arc] (a1) -- (s2);
\draw[arc] (s1) -- (c1);

\draw[arc] (s2) -- (c2);
\draw[arc] (s2) -- (a2);
\draw[arc] (s3) -- (c2);
\draw[arc] (a2) -- (c2);
\draw[arc] (a2) -- (s3);
\draw[arc] (s2) -- (s3);

\draw[arc] (s3) -- (c3);
\draw[arc] (s4) -- (c3);
\draw[arc] (s3) -- (a3);
\draw[arc] (a3) -- (c3);
\draw[arc] (a3) -- (s4);

\draw[arc] (s3) -- (s4);
% \draw[arc] (s4) -- (c4);
\end{tikzpicture}
\caption{A Markov decision process (MDP)}
\label{fig:MDP_ex}
\end{subfigure}
~\hspace{1cm}
\begin{subfigure}{0.45\textwidth}
\centering
\begin{tikzpicture}

\def\h{1.5}

\node[sta] (s1) at (0,2*\h) {$s_1$};
\node[sta] (o1) at (0,1*\h) {$o_1$};
\node[uti] (c1) at (0,3*\h) {$r_1$};
\node[act] (a1) at (1,1*\h) {$a_1$};

\node[sta] (s2) at (2,2*\h) {$s_2$};
\node[sta] (o2) at (2,1*\h) {$o_2$};
\node[uti] (c2) at (2,3*\h) {$r_2$};
\node[act] (a2) at (3,1*\h) {$a_2$};

\node[sta] (s3) at (4,2*\h) {$s_3$};
\node[sta] (o3) at (4,1*\h) {$o_3$};
\node[uti] (c3) at (4,3*\h) {$r_3$};
\node[act] (a3) at (5,1*\h) {$a_3$};

\node[sta] (s4) at (6,2*\h) {$s_4$};
% \node[uti] (c4) at (6,3*\h) {$r_4$};

\node (tar) at (3,0.6*\h) {};

\draw[arc] (s1) -- (s2);
\draw[arc] (s1) -- (o1);
\draw[arc] (a1) -- (c1);
\draw[arc] (o1) -- (a1);
\draw[arc] (a1) -- (s2);
\draw[arc] (s1) -- (c1);
\draw[arc] (s2) -- (c1);

\draw[arc] (s2) -- (o2);
% \draw[arc] (o1) to [bend right=40] (a2);
% \draw[arc] (a1) to [bend right=40] (a2);
\draw[arc] (o2) -- (a2);
\draw[arc] (s2) -- (c2);
\draw[arc] (a2) -- (c2);
\draw[arc] (a2) -- (s3);
\draw[arc] (s2) -- (s3);

\draw[arc] (s3) -- (c2);
\draw[arc] (s3) -- (c3);
\draw[arc] (s3) -- (o3);
\draw[arc] (o3) -- (a3);
% \draw[arc] (o1) to [bend right=40] (a3);
% \draw[arc] (a1) to [bend right=30] (a3);
% \draw[arc] (o2) to [bend right=50] (a3);
% \draw[arc] (a2) to [bend right=40] (a3);
\draw[arc] (a3) -- (c3);
\draw[arc] (a3) -- (s4);
\draw[arc] (s4) -- (c3);

\draw[arc] (s3) -- (s4);
% \draw[arc] (s4) -- (c4);

\end{tikzpicture}
\caption{A Partially Observed Markov Decision Process (POMDP) with limited memory}
\label{fig:POMDP_ex}
\end{subfigure}
\caption{ID examples, where we represent chance vertices ($\Vs$) in circles, decision vertices ($\Va$) in rectangles, and utility vertices ($\Vl$) in diamonds.}
\label{fig:LIMIDexamples}
\end{figure}
% Markov decision processes (MDP) are probably the most simple example of \limids. \inlAP{give a figure}. In that

% %%%%%%%%%%%%%%%%%%%%%%%%%%%%%%%%%%%%%%%%%%%%%%%%%%%%%%%%%%%%%%%%%%%%%%%%%%%%%%%
% \subsection{Bob and Alice daily chess game}
% \label{sub:bob_and_alice_daily_chess_game}
% %%%%%%%%%%%%%%%%%%%%%%%%%%%%%%%%%%%%%%%%%%%%%%%%%%%%%%%%%%%%%%%%%%%%%%%%%%%%%%%

\begin{figure}[!ht]
    \begin{center}
        \begin{tikzpicture}
        \def\l{1.5}
        \def\h{1.2}

        \node[sta] (s1) at (-0.5*\l,2*\h) {$s_1$};
        \node[sta] (o1) at (0*\l,1*\h) {$o_1$};
        \node[sta] (u1) at (0*\l,0*\h) {$u_1$};
        \node[act] (a1) at (1*\l,0*\h) {$a_1$};
        \node[sta] (v1) at (1*\l,1*\h) {$v_1$};
        \node[uti] (r1) at (1.5*\l,-0.5*\h) {$r_1$};

        \node[sta] (s2) at (1.5*\l,2*\h) {$s_2$};
        \node[sta] (o2) at (2*\l,1*\h) {$o_2$};
        \node[sta] (u2) at (2*\l,0*\h) {$u_2$};
        \node[act] (a2) at (3*\l,0*\h) {$a_2$};
        \node[sta] (v2) at (3*\l,1*\h) {$v_2$};
        \node[uti] (r2) at (3.5*\l,-0.5*\h) {$r_2$};

        \node[sta] (s3) at (3.5*\l,2*\h) {$s_3$};
        \node[sta] (o3) at (4*\l,1*\h) {$o_3$};
        \node[sta] (u3) at (4*\l,0*\h) {$u_3$};
        \node[act] (a3) at (5*\l,0*\h) {$a_3$};
        \node[sta] (v3) at (5*\l,1*\h) {$v_3$};
        \node[uti] (r3) at (5.5*\l,-0.5*\h) {$r_3$};

        \node[sta] (s4) at (5.5*\l,2*\h) {$s_4$};

        \node (tar) at (3,0.6*\h) {};

        \draw[arc] (s1) -- (s2);
        \draw[arc] (s2) -- (s3);
        \draw[arc] (s3) -- (s4);
        % \draw[arc] (s1) -- (s2);

        \draw[arc] (s1) -- (o1);
        \draw[arc] (s2) -- (o2);
        \draw[arc] (s3) -- (o3);

        \draw[arc] (o1) -- (v1);
        \draw[arc] (o2) -- (v2);
        \draw[arc] (o3) -- (v3);

        \draw[arc] (o1) -- (u1);
        \draw[arc] (o2) -- (u2);
        \draw[arc] (o3) -- (u3);

        \draw[arc] (u1) -- (a1);
        \draw[arc] (u2) -- (a2);
        \draw[arc] (u3) -- (a3);

        \draw[arc] (v1) -- (s2);
        \draw[arc] (v2) -- (s3);
        \draw[arc] (v3) -- (s4);

        \draw[arc] (a1) -- (v1);
        \draw[arc] (a2) -- (v2);
        \draw[arc] (a3) -- (v3);

        \draw[arc] (v1) to [bend left=20] (r1);
        \draw[arc] (v2) to [bend left=20] (r2);
        \draw[arc] (v3) to [bend left=20] (r3);

        \draw[arc] (s1) -- (s2);

        \end{tikzpicture}
    \end{center}
    \caption{Bob and Alice chessgame}
    \label{fig:example_chess}
\end{figure}
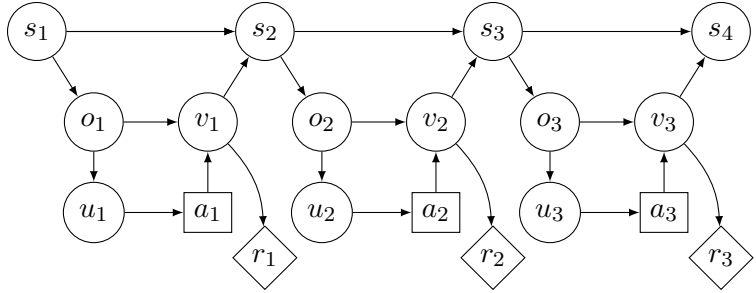

\subsection{Solving MDP through linear programs}

% \comAP{Say that we will use sometimes the vectorial notation $\mu$ and sometimes the probabilistic notation $\bbP_{\mu}$}

We recall here a well known linear programming formulation for MDP (see \eg \cite{puterman2014markov}), which is a special case of the Mixed Integer Linear Program (MILP) formulation introduced in the paper.
We denote by $p(s'|s,a)$ the probability of transiting from state $s$ to state $s'$ if action $a$ is taken, and $r(s,a,s')$ the reward associated to this transition.
For $t \in [T]:=\{1, \ldots, T\}$, let $\mu_s^t$ represent the probability of being in state $s$ at time $t$, let $\mu_{sa}^t$ represent the probability of being in state $s$ and taking action $a$ at time $t$, and let $\mu_{sas'}^t$ represent the probability of being in state $s$ and taking action $a$ at time $t$, while transiting to state $s'$ at time $t+1$.
This leads to the following mixed integer linear program
% \begin{subequations}\label{pb:MDP_LP}
% 	\begin{align}
% 		\min_{\mu} \quad &{} \sum_t  \sum_{x_r \in \calX_r^t}  x_r\mu_r^t(x_r)
% 		\label{eq:MDP_obj} \\
% 		\mathrm{s.t.} \quad & \mu_r^t(x_r) = \sum_{x_s,x_a} \mu_{sas'}^t(x_s,x_a, x_{s'}) p(x_r|x_s,x_a, x_{s'}) & \forall t,c,x_c \label{eq:MDP_cost_law}\\
% 		& \mu_{s}^{t+1}(x_{s}'') = \sum_{x_s,x_a} \mu_{sas'}^t(x_s,x_a,x_{s}') & \forall t,s,x'_s
% 		\label{eq:MDP_state_dynamic}\\
% 		& \sum_{x_s,x_a} \mu_{sa}^t(x_s,x_a) = 1 & \forall t \label{eq:MDP_mu_prob1}\\
% 		& \mu\geq 0 \label{eq:MDP_mu_prob2}
% 	\end{align}
% \end{subequations}
\begin{subequations}\label{pb:MDP_LP}
    \begin{alignat}{2}
            \max_{\boldsymbol{\mu}} \quad &{} \sum_{t=1}^{T-1} \sum_{s, a,s'} \mu_{sas'}^t \, r(s, a,s') \label{eq:MDP_obj}\\
            \mathrm{s.t.} \quad & \displaystyle \mu_{sas'}^t =  p(s'|s,a)\,\mu_{sa}^{t}, & \forall t,s,s',\label{eq:MDP_state_dynamic}\\
            & \mu_{s_0}^0 = 1,  \label{eq:MDP_initial_state}\\
            & \displaystyle \sum_{s,a} \mu_{sas'}^t = \mu_{s'}^{t+1},  & \forall t, s', \label{eq:MDP_mu_consistency}\\
            & \displaystyle \sum_{s} \mu_{s}^t = 1,  & \forall t ,\label{eq:MDP_mu_normalization}\\
            & \mu_s^t, \mu_{sa}^t , \mu_{sas'}^t \in\{0,1\}, &  \forall t,a,s,s',\label{eq:MDP_mu_nonnegativity}
            % & \mu_s^t, \mu_{sa}^t , \mu_{sas'}^t \geq 0, &  \forall t,a,s,s',\label{eq:MDP_mu_nonnegativity}
    \end{alignat}
\end{subequations}
where the objective~\eqref{eq:MDP_obj} is simply the expected reward, Constraints~\eqref{eq:MDP_state_dynamic} represent the state dynamics, Constraints~\eqref{eq:MDP_initial_state} set the initial state of the system to $s_0$, and Constraints~\eqref{eq:MDP_mu_consistency}-\eqref{eq:MDP_mu_nonnegativity} ensure that $\mu$ represent marginals laws of a joint distribution. Integrity constraints~\eqref{eq:MDP_mu_nonnegativity} ensure that the policy chosen is deterministic.
% Recall that an MDP always admits a deterministic optimal solution, that is, knowing current state $s_t$ at time $t$, the optimal action $a_t$ can be chosen deterministically, which justifies integrity constraints~\eqref{eq:MDP_mu_nonnegativity}.
In the MDP case, we can drop these integrity constraints and still obtain an optimal solution. In \Cref{sec:MILP}, the integrity constraints will come out to be useful in the general case.
% Consequently, in this linear program, there exists an optimal solution such that, for any state $s$, action $a$ and time $t$, $\mu_{sa}^t \in \{0,1\}$.
% Adding this integrity constraints has no impact here, but this idea is used in  \Cref{sec:MILP} to ensure that McCormick's relaxations of non-linear constraints are tight.\comGO{this sentence is a bit strange because it anticipates a lot and the reader not familiar with McCormick is just lost.}
% \vl{I think it gives some idea of what is to come later on.}

% Using a $\limid$ representation of the MDP, and graph notations, we can reformulate Problem~\eqref{pb:MDP_LP} as
% \vlil{might not be very interesting}
% \begin{subequations}
% 	\begin{align}
% 		\min_{\mu} \quad &  \sum_{r \in \Vl} \sum_{r \in \calX_r}  U(x_r)\mu_r(x_r)\\
% 		\mathrm{s.t.} \quad & \mu_v(x_v) = \sum_{m \in \prt{v}} \sum_{x_m \in \calX_m}   \\
% 		& \sum_{x_s,x_a} \mu_{sa}^t(x_s,x_a) = 1 & \forall t \\
% 		& \mu\geq 0
% 	\end{align}
% \end{subequations}

%%%%%%%%%%%%%%%%%%%%%%%%%%%%%%%%%%%%%%%%%%%%%%%%%%%%%%%%%%%%%%%%%%%%%%%%%%%%%%%
\subsection{Literature}
\label{sub:literature}
%%%%%%%%%%%%%%%%%%%%%%%%%%%%%%%%%%%%%%%%%%%%%%%%%%%%%%%%%%%%%%%%%%%%%%%%%%%%%%%

Influence diagrams were introduced by \citet{howard1984readings} \citep[see also][]{howard2005influence}
to model stochastic optimization problems using a probabilistic graphical model framework.
Originally, the decision makers were assumed to have \emph{perfect recall} \citep{shenoy1992valuation,Shachter86EvalutingID, jensen1994influence} of the past actions family.

 \citet{lauritzen2001representing} relaxed this assumption\footnote{These authors used the name \emph{limited memory influence diagrams} when relaxing the perfect recall assumption, but we follow the convention of \citet{koller2009probabilistic} who still call them \emph{influence diagrams} (ID).}
and provided a simple (coordinate descent) algorithm to find a good policy: the Single Policy Update (SPU) algorithm.
The same authors also introduced the notion of \emph{soluble} ID as a sufficient condition for SPU to converge to an optimal solution.
This notion has been generalized by \citet{KOLLER&MILCH} to obtain a necessary and sufficient condition.
In general, SPU only finds a locally optimal policy, and requires to perform exact inference, so that it is therefore limited by the \emph{treewidth}~\citep{chandrasekaran2008complexity}.
More recently, \citet{maua2011solving} and \citet{maua2016fast} have introduced a new algorithm, \emph{Multiple Policy Update}, which has both an exact and a heuristic version and relies on dominance to discard partial solutions. It can be interpreted as a generalization of SPU where several decisions are considered simultaneously.
Later on, \citet{khaled2013solving} %\comAP{See if we can improve by using our bounds in their algorithm}
proposed a similar approach, with a Branch-and-Bound flavor, while \citet{liu2014reasoning} introduced heuristics based on approximate variational inference.
Finally, \citet{maua2016equivalences} has recently shown that the problem of solving an ID can be polynomially transformed into a maximum a posteriori (MAP) problem, and hence can be solved using popular MAP solvers such as \texttt{toulbar2} \citep{hurley2016multi}. %\jo{may be some examples of Map solvers should be given here}

Finding an optimal policy for an ID has been shown to be \NP-hard even when restricted to IDs of treewidth non-greater than two, or to trees with binary variables \citet{maua2012complexity,maua2013complexity}.
Note that even obtaining an approximate solution is also \NP-hard \citet{maua2012complexity}.

\emph{Credal networks} are generalizations of probabilistic graphical models where the parameters of the model are not known exactly. MILP formulations for credal networks that could be applied to IDs have been introduced by \citet{de2007inference,de2012strategy}.
% Yet, the size of the optimization program is exponential in the pathwidth of the associated digraph.
% The main difference with our formulation, is that we use junction trees where they rely on variable elimination.
However, the number of variables they require is exponential in the pathwidth, which is non-smaller and can be arbitrarily larger than the width of the tree we are using (follows from \citep[Theorem 4]{scheffler1990linear}),
and the linear relaxation of their MILP is not as good as the one of the MILP we propose, and does not yield an integer solution on soluble IDs.
Our approach %based on the concept of \emph{rooted junction tree} (RJT) which we introduce in this paper,
 can naturally be extended to credal networks.
\subsection{Contributions}
\label{sub:contributions}
%%%%%%%%%%%%%%%%%%%%%%%%%%%%%%%%%%%%%%%%%%%%%%%%%%%%%%%%%%%%%%%%%%%%%%%%%%%%%%%

The contributions of the paper are as follows.

\begin{itemize}
    \item We introduce a non-linear program and a mixed integer linear program for the MEU problem on influence diagrams.
    \item These mathematical programs rely on a variant of the concept of a \emph{strong junction tree} which we introduce and call a \emph{rooted junction tree}.
    We provide algorithms to build rooted junction trees that lead to ``good'' mathematical programs for influence diagrams.
    \item We introduce a particular form of valid cuts for the obtained mixed integer linear program. 
    These valid cuts leverage conditional independence properties in the influence diagram.
    We show that our cuts are the strongest ones in a certain sense.
    % To the best of our knowledge, such ``independence cuts'' have not been proposed in combinatorial optimization.
    We believe that this idea of leveraging conditional independence to obtain valid cuts is fairly general and could be extended to other contexts.
    \item We establish a link between the linear relaxation of our MILP and the concept of \emph{soluble relaxation} previously introduced in the literature on influence diagrams.
    In fact, our relaxation provides a better bound than those relaxations.
    % We provide interpretations of the bounds provided by the linear relaxation in terms of ``relaxed'' influence diagrams.
    % As a corollary, we can show that the linear relaxation of our mixed integer linear program provides a bound that is better than those available in the literature, which rely on soluble influence diagrams, i.e., influence diagrams that can be solved to optimality using SPU, the usual heuristic for influence diagrams.
    \item We provide two new characterizations of soluble influence diagrams.
    First, as the only influence diagrams that can be solved to optimality using the linear relaxation of our mixed integer linear program.
    Second, and more importantly, as the influence diagrams for which there exists a rooted junction tree such that the set of collections of moments of distributions that are induced by the different policies is convex.
    \item We illustrate our mathematical programs and their properties on some simple numerical examples.
\end{itemize}

\subsection{Organization of the paper} % (fold)
\label{sub:contributions}

% subsection contributions (end)
% \vl{add contrast with literature}\comGO{This is more the structure of the rest of the paper than a section on contributions. If we would like to present the contribution is should be done for real...}
% \vlil{adequate ?}
% In this paper we show how to extend this type of approach to generic IDs.
In \Cref{sec:tools}, we recall some definitions for graphical models, that are used
to extend the notion of \emph{junction tree} to \emph{rooted junction tree} in \Cref{sec:rooted_junection_trees}.
With these tools, \Cref{sec:MILP} introduces a bilinear formulation that can be rewritten as a mixed integer linear programming (MILP) formulation to the MEU Problem~\eqref{pb:LIMID}.
In \Cref{sec:validCuts} we give efficient valid cuts%\comGO{these are equalities, and I don't understand "independance inequalities"} 
for the MILP formulation, and interpret them in terms of graph relaxations.
\Cref{sec:soluble_} studies the polynomial case of soluble ID, showing that the ID that can be solved to optimality
by SPU can be solved by (continuous) linear programming using our formulation.
Finally \Cref{sec:numerical} summarizes our numerical experiments.

%!TEX root=../main_MP.tex

\section{Tools from Probabilistic graphical model theory}
\label{sec:tools}

In this section we present notations and tools used in the following sections to refomulate the MEU Problem~\ref{pb:LIMID}.

%%%%%%%%%%%%%%%%%%%%%%%%%%%%%%%%%%%%%%%%%%%%%%%%%%%%%%%%%%%%%%%%%%%%%%%%%%%%%%%
\subsection{Graph notation}
\label{sub:graph_notation}
% Please check https://english.stackexchange.com/questions/186434/is-notations-a-proper-english-word
%%%%%%%%%%%%%%%%%%%%%%%%%%%%%%%%%%%%%%%%%%%%%%%%%%%%%%%%%%%%%%%%%%%%%%%%%%%%%%%

This section introduces our notations for graphs, which are for the most part the ones commonly used in the combinatorial optimization community \citep{schrijver2003combinatorial}.
A directed graph $G$ is a pair $(V,E)$ where $V$ is the set of vertices and $E \subseteq V^2$ the set of arcs.
We write $u \rightarrow v$ when $(u,v) \in E$. Let  $[k]:=\{1, \ldots, k\}$.
A \emph{path} is a sequence of vertices $v_1,\ldots,v_k$ such that $v_i \rightarrow v_{i+1}$, for any $i \in [k-1]$. %\vl{after rewritting intro check that notation $[k]$ is defined}
A path between two vertices $u$ and $v$ is called a $u$-$v$ path.
We write
$u \rcarrow[G] v$ to denote the existence of a $u$-$v$ path in $G$, or simply $u \rcarrow v$ when $G$ is clear from context.
We write $u \rightleftharpoons v$ if there is an arc $u \rightarrow v$ or $v\rightarrow u$.
A \emph{trail} is a sequence of vertices $v_1,\ldots,v_k$ such that $v_i \rightleftharpoons v_{i+1},$ for all $i \in [k-1]$.
% \comAP{Note that, to be consistent with Schrijver, we should use chain instead of trail. Maybe not, because in the notion of trail, the orientation is kept, and it is a well established terminology}

A \emph{parent} (resp.~\emph{child}) of a vertex $v$ is a vertex $u$ such that $(u,v)$ (resp.~$(v,u)$) belongs to~$E$; we denote by $\prt{v}$ the set of parents vertices (resp.~$\cld{v}$ the set of children vertices).
%%[GO: The notion of coparent is not used in the paper]
%%A vertex $u$ is a \emph{coparent} of a vertex $v$ if $v$ and $u$ have a child in common, and write $\cpt{v}$ for this set.

% We denote respectively $\prt{v}$, $\cld{v}$, and $\cpt{v}$ the set of parents, children, and coparents of $v$.
The \emph{family} of $v$, denoted by $\fa{v}$, is the set $\{v\}\cup\prt{v}$.
A vertex $u$ is an \emph{ascendant} (resp.~a \emph{descendant}) of $v$ if there exists a $u$-$v$~path. %\vl{technically not defined} (resp.~a $v$-$u$~path).
We denote respectively $\asc{v}$ and $\dsc{v}$ the set of ascendants and descendants of $v$.
Finally, let $\casc{v} = \{v\}\cup \asc{v}$, and $\cdsc{v} = \{v\} \cup \dsc{v}$.
For a set of vertices $C$, the parent set of $C$, again denoted by $\prt{C}$, is the set of vertices $u$ that are parents of a vertex $v\in C$.
We define similarly $\fa{C}$, $\cld{C}$, $\asc{C}$, and $\dsc{C}$.
Note that we sometimes indicate in subscript the graph according to which the parents, children, etc.,~are taken.
For instance, $\prt[G]{v}$ denotes the parents of $v$ in $G$.
We drop the subscript when the graph is clear from the context.

% A parent, children, coparent, ascendant, or descendant of a set of vertices are the set of vertices $C$ is a vertex $u$ that is a parent, children, coparent, ascendant, or descendant of a vertex $v$ in $C$.

% A \emph{trail} in a directed graph is a path in the undirected graph underlying a directed graph.

% \comAP{If needed, define all with $G$ as subscript, and say that we drop the subscript when the reference in non-ambiguous. }

  % We denote $\cpt{v}$ the set of coparents of $v$
A \emph{cycle} is a path $v_1,\ldots,v_k$ such that $v_1 = v_k$.
A graph is \emph{connected} if there exists a path between any pair of vertices.
An undirected graph is a \emph{tree} if it is connected and has no cycles.
A directed graph is a \emph{directed tree} if its underlying undirected graph is a tree.
A \emph{rooted tree} is a directed tree such that all vertices have a common ascendant referred to as the \emph{root} of the tree\footnote{The probabilistic graphical model community sometimes calls a \emph{directed tree} what we call here a \emph{rooted tree}, and a \emph{polytree} what we call here a \emph{directed tree}.}.
In a rooted tree, all vertices but the root have exactly one parent.
 % our notion of rooted tree is sometimes called directed tree in the probabilistic graphical model community, and our notion of directed tree is sometimes called a polytree

% $\fa{v}$

% $\asc{v}$

% $\dsc{v}$

% $\cdsc{v}$

% $\casc{v}$

% $\prt{v}$

% $\cld{v}$

% \inlAP{Note that our definition of \rjt is stronger than the definition of \emph{strong junction tree} by \citet{jensen1994influence}, but a little stronger}.

% Graph / Directed Graph

% Tree / Directed tree (Polytree) / Rooted tree

%%%%%%%%%%%%%%%%%%%%%%%%%%%%%%%%%%%%%%%%%%%%%%%%%%%%%%%%%%%%%%%%%%%%%%%%%%%%%%%
\subsection{Directed graphical model}
\label{sub:directed_graphical_model}
%%%%%%%%%%%%%%%%%%%%%%%%%%%%%%%%%%%%%%%%%%%%%%%%%%%%%%%%%%%%%%%%%%%%%%%%%%%%%%%

% \vl{La sous-section fait très vide maintenant}

In this paper, we manipulate several distributions on the same random variables. Given three random variables $X$, $Y$, $Z$, the notation
$ \big(X \indep Y | Z\big)_\mu $
stands for ``$X$ is independent from $Y$ given $Z$ according to $\mu$''. The parenthesis $(\cdot)_\mu$ are dropped when $\mu$ is clear from context.
The same notation is used for independence of events.

A well-known sufficient condition for a distribution to factorize as a probabilistic graphical model is that each vertex is independent from its non-descendants given its parents.
% \jo{this could be briefly describe with words, e.g. with one/two sentences.}
\begin{prop}\label{prop:factorizationDirectedGraph}\citep[Theorem 3.1, p.~62]{koller2009probabilistic}
Let $\bbP_{\mu}$ be a distribution on $\calX_V$.
% Theorem 3.1 p.~62 in \citep[p.~62]{koller2009probabilistic} ensures that
Then $\bbP_{\mu}$ factorizes as a directed graphical model on $G$, that is
$$\bbP_{\mu}(X_V = x_V) = \prod_{v \in V} \bbP_{\mu}(X_v = x_v | X_{\prt{v}} = x_{\prt{v}}),$$
if and only if %according to $\mu$,
\begin{equation}\label{eq:indepFromDesc}
	 \left(X_v \indep X_{V \backslash \cdsc[G]{v}} | X_{\prt{v}}\right)_\mu \quad \text{for all $v$ in }V.
\end{equation}
\end{prop}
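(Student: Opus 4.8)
The statement is the classical equivalence between the factorization of a distribution along a DAG and the local Markov property (each vertex independent of its non-descendants given its parents). Since the excerpt attributes this to \citep[Theorem 3.1]{koller2009probabilistic}, I expect the paper either to cite it directly or to include a short self-contained argument; I sketch the latter. The plan is to prove the two implications separately, working with a fixed topological ordering of $G$.

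\emph{From factorization to the local Markov property.} Suppose $\bbP_\mu$ factorizes as $\prod_{v\in V}\bbP_\mu(X_v=x_v\mid X_{\prt{v}}=x_{\prt{v}})$. Fix a topological order $v_1,\dots,v_n$ of $V$ compatible with $E$, so that $\prt{v_i}\subseteq\{v_1,\dots,v_{i-1}\}$. First I would show by induction on $i$ that the marginal of $\bbP_\mu$ on $\{v_1,\dots,v_i\}$ equals $\prod_{j\le i}\bbP_\mu(X_{v_j}\mid X_{\prt{v_j}})$ — this follows by summing the full factorization over $x_{v_{i+1}},\dots,x_{v_n}$ and telescoping, using that $\sum_{x_v}\bbP_\mu(X_v=x_v\mid X_{\prt{v}})=1$. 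Consequently $\bbP_\mu(X_{v_i}\mid X_{v_1},\dots,X_{v_{i-1}})=\bbP_\mu(X_{v_i}\mid X_{\prt{v_i}})$, i.e.\ $X_{v_i}$ is independent of its predecessors in the order given its parents. Because the non-descendants of $v_i$ can always be completed to a set of predecessors under a suitable topological order (any ordering can be permuted so that $V\setminus\cdsc{v_i}$ precedes $v_i$), a standard semi-graphoid argument (decomposition/weak union applied to ``$X_{v_i}\indep$ predecessors $\mid X_{\prt{v_i}}$'') yields $\left(X_{v}\indep X_{V\setminus\cdsc{v}}\mid X_{\prt{v}}\right)_\mu$ for every $v$.

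\emph{From the local Markov property to factorization.} Conversely, assume~\eqref{eq:indepFromDesc}. Fix again a topological order $v_1,\dots,v_n$. For each $i$, the predecessors $\{v_1,\dots,v_{i-1}\}$ are contained in $V\setminus\cdsc{v_i}$ (a descendant of $v_i$ cannot precede $v_i$), so by~\eqref{eq:indepFromDesc} together with decomposition we get $\bbP_\mu(X_{v_i}\mid X_{v_1},\dots,X_{v_{i-1}})=\bbP_\mu(X_{v_i}\mid X_{\prt{v_i}})$. Then the chain rule gives
\[
\bbP_\mu(X_V=x_V)=\prod_{i=1}^n\bbP_\mu(X_{v_i}=x_{v_i}\mid X_{v_1}=x_{v_1},\dots,X_{v_{i-1}}=x_{v_{i-1}})=\prod_{i=1}^n\bbP_\mu(X_{v_i}=x_{v_i}\mid X_{\prt{v_i}}=x_{\prt{v_i}}),
\]
which is the desired factorization.

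\emph{Main obstacle.} The routine direction is the chain-rule computation; the delicate point is the passage from ``independence of $X_v$ from its \emph{predecessors in one fixed order} given $X_{\prt v}$'' to ``independence from \emph{all} non-descendants given $X_{\prt v}$'', which requires either invoking the semi-graphoid axioms (decomposition and weak union) carefully or, more cleanly, observing that the argument can be run with \emph{any} topological order and that for each $v$ one may choose an order in which every non-descendant of $v$ precedes $v$ — then decomposition alone suffices. I would make that choice-of-ordering step explicit, since it is where an otherwise short proof can become circular if handled loosely. Given that the result is standard and already cited, the cleanest option in the paper is simply to reference \citep[Theorem 3.1, p.~62]{koller2009probabilistic} and omit the proof.
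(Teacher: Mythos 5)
Your sketch is the standard and correct argument (chain rule along a topological order for the factorization direction, and for the converse the key observation you flag explicitly, namely that for each $v$ the non-descendants $V\setminus\cdsc[G]{v}$ form an ancestrally closed set, so a topological order can be chosen in which they all precede $v$). The paper itself gives no proof of this proposition — it only cites \citep[Theorem 3.1, p.~62]{koller2009probabilistic} — so your closing recommendation to reference the textbook and omit the argument is exactly what the authors do; the only detail glossed over in your sketch, as in most textbook statements, is the handling of conditioning events of probability zero.
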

% \jo{the notation $(\cdot)_{\mu}$ above is unclear to me}

%%%%%%%%%%%%%%%%%%%%%%%%%%%%%%%%%%%%%%%%%%%%%%%%%%%%%%%%%%%%%%%%%%%%%%%%%%%%%%%
\subsection{Junction trees}
\label{sub:directed_graphical_models}

When dealing with the MEU Problem~\ref{pb:LIMID}, one needs to deal with distributions $\mu_V$ on $\calX_V$ that factorize as in \eqref{eq:probabilityDistributionGivenPolicy} for some policy $\delta$.
In theory, it suffices to consider distributions $\mu_V$ satisfying the conditional independences given by \Cref{eq:indepFromDesc} and such that $\bbP_{\mu}(X_v | X_{\prt{v}}) = p_{v|\prt{v}}$ for each vertex $v$ that is not a decision.
However, the joint distribution $\mu_V$ on all the variables is too large to be manipulated in practice as soon as $V$ is moderately large. % due to the size of $\mu$.
 % the joint distribution $\mu_V$ on all the variables is too large to be manipulated in practice.
In that case, it is handy to work with a \emph{vector of moments} $\tau = (\tau_C)_{C\in \calV}$, where $\calV \subseteq 2^V$, that is, a vector of distributions $\tau_C$ on subsets of variables $C$ of tractable size.
A vector of moment $(\tau_C)_{C \in \calV}$ derives from a distribution $\mu_V$ on $\calX_V$ if each moment $\tau_C \in [0,1]^{\calX_{C}}$ is the marginal of $\mu_V$, \ie $\tau_C(x_C)=\sum_{x_{V\backslash C} \in \calX_{V \backslash C}}\mu_V(x_C,x_{V\backslash C})$ for all $C$ in $\calV$ and $x_C$ in $\calX_C$. To keep notations light, we will write this type of equality more compactly as $\tau_C=\sum_{x_{V\backslash C}}\mu_V$.
We use the notation $\mu=(\mu_C)_{C \in \calV}$ for the vector of moments deriving from a distribution, and $\bbP_\mu$ or $\mu_V$ for the corresponding distribution on $\calX_V$.

A necessary condition for a vector of moments $(\tau_C)_{C \in \calV}$ to derive from a distribution is to be \emph{locally consistent}, that is to induce the same marginals on the intersections of pairs of elements of $\calV$, i.e., that for all $C_1,C_2 \in \cal V,$ we have 
$$\sum_{x_{C_1 \backslash C_2}} \tau_{C_1}= \sum_{x_{C_2 \backslash C_1}} \tau_{C_2},$$
where, as before, $\sum_{x_{C_1 \backslash C_2}} \tau_{C_1}$ is the vector $\big (\sum_{x_{C_1 \backslash C_2} \in \calX_{C_1 \backslash C_2}} \tau_{C_1}(x_{C_1 \backslash C_2}, x_{C_1 \cap C_2}) \big )_{x_{C_1 \cap C_2} \in \calX_{C_1 \cap C_2}}$.
It turns out that graphical model theory provides a condition on the choice of $\calV$ together with the choice of local consistency constraints which are sufficient for $(\tau_C)_{C \in \calV}$ to derive from a distribution on $\calX_V$. This is done via the definition of a junction tree. 
Let $\Gcl = (\Vcl,\Acl)$ be an undirected graph associated with $G= (V,E)$ with $\Vcl \subseteq 2^V$, and such that there is a mapping $v \mapsto C_v$ from $V$ to $\Vcl$ satisfying that  $\fa{v} \subseteq C_v$.
If $\calG$ is a tree, and satisfies the \emph{running intersection property}, i.e., that given two vertices $C_1$ and $C_2$ in $\calV$, any vertex $C$ on the unique undirected path from $C_1$ to $C_2$ in $\calG$ satisfies $C_1 \cap C_2 \subset C$, then $\calG$ is called a \emph{junction tree} of $G$.
See \Cref{fig:jt-rjt-example} for an illustration of this notion.
Given a junction tree $\calG$, its associated \emph{marginal polytope} $\calL^0_\calG$ is defined as follows
\begin{equation}\label{eq:localPolytope}
\calL^0_\calG = \left\{ (\tau_C)_{C \in \calV} \colon \left| 
\begin{array}{l}
\displaystyle\tau_C \geq 0 \quad \text{and} \quad \sum_{x_C}\tau_C(x_C) = 1 \quad \forall x_C \in \calX_C,
\: \forall C \in \calV,  
 \\
\text{ and }\quad \displaystyle\sum_{x_{C_1 \backslash C_2}} \tau_{C_1}%(x_{C_1 \backslash C_2}, x_{C_1 \cap C_2})
= \sum_{x_{C_2 \backslash C_1}} \tau_{C_2}, \quad%(x_{C_2 \backslash C_1}, x_{C_1 \cap C_2})
\forall \{C_1,C_2\} \in \calA, %\enskip \forall x_{C_1 \cap C_2} \in \calX_{C_1 \cap C_2}
\end{array}
\right.\right\}
	% \sum_{x_{C_1 \backslash C_2}} \tau_{C_1}(x_{C_1 \backslash C_2}, x_{C_1 \cap C_2}) = \sum_{x_{C_2 \backslash C_1}} \tau_{C_2}(x_{C_2 \backslash C_1}, x_{C_1 \cap C_2}) %\quad \text{for all }C_1\times C_2 \in \calV^2, x_{C_1 \cap C_2} \in \calX_{C_1 \cap C_2},
\end{equation}
Then $\tau=(\tau_C)_{C \in \calV}$ is a vector of moments deriving from a distribution $\mu_V$ on $\calX_V$ if and only if $\tau \in \calL^0_\calG$  \citep[Proposition 2.1]{wainwright2008graphical}.
% }

% A \emph{junction tree} is a cluster graph $\calG$ that is an undirected tree, and satisfies the \emph{running intersection property}: given two vertices $C_1$ and $C_2$ in $\calG$, any vertex $C$ on the unique undirected path from $C_1$ to $C_2$ in $\calG$, we have $C_1 \cap C_2 \subset C$.
% See \Cref{fig:jt-rjt-example} for an illustration of this notion.

%!TEX root=../main_MP.tex
%%%%%%%%%%%%%%%%%%%%%%%%%%%%%%%%%%%%%%%%%%%%%%%%%%%%%%%%%%%%%%%%%%%%%%%%%%%%%%%
%%%%%%%%%%%%%%%%%%%%%%%%%%%%%%%%%%%%%%%%%%%%%%%%%%%%%%%%%%%%%%%%%%%%%%%%%%%%%%%
\section{Rooted junction trees}
\label{sec:rooted_junection_trees}
%%%%%%%%%%%%%%%%%%%%%%%%%%%%%%%%%%%%%%%%%%%%%%%%%%%%%%%%%%%%%%%%%%%%%%%%%%%%%%%
%%%%%%%%%%%%%%%%%%%%%%%%%%%%%%%%%%%%%%%%%%%%%%%%%%%%%%%%%%%%%%%%%%%%%%%%%%%%%%%

% \inlAP{Remarks for Guillaume for rewriting.
% \begin{itemize}
% 	\item Say why we introduce \rjt
% \end{itemize}
% Steps for explanations:
% \begin{itemize}
% 	\item We want to make computations on $\mu_V$
% 	\item $\mu_V$ too large to be handled directly
% 	\item We only need marginals, and want to work with marginals $(\mu_C)_C$ that corresponds to the moments of distributions that factorize on the directed graphical model
% 	\item Such $(\mu_C)_{C\in \calV}$ necessarily satisfy \eqref{eq:localPolytope} \eqref{eq:localIndepFromDesc}, but \eqref{eq:localPolytope} \eqref{eq:localIndepFromDesc} are not a sufficient for the existence of a $\mu_V$ admitting $(\mu_C)_{C\in \calV}$ as marginals.
% 	\item Objective of the section: we want to provide a sufficient condition on $\calV$ for \eqref{eq:localPolytope} \eqref{eq:localIndepFromDesc} to be a sufficient for the existence of a $\mu_V$ admitting $(\mu_C)_{C\in \calV}$ as marginals.
% \end{itemize}
% Answer
% \begin{itemize}
% 	\item It is well known that a sufficient condition for the existence of a distribution (that does not factorize on the directed graphical model): junction tree and  \eqref{eq:localPolytope}
% 	\item All the moments of distributions that factorize on the directed graphical model satisfy this condition
% 	\item Remark: moralized graph
% \end{itemize}
% }

To solve the MEU Problem~\eqref{pb:LIMID}, we work on vectors of moments $(\mu_C)_{C \in \calV}$ that correspond to the moments of distributions $\mu$ induced by policies $\delta \in \Delta$.
Hence, we are interested in vectors $\mu$ of moments such that $\mu_V$ factorizes as a directed graphical model on $G$.
Such vectors of moments necessarily satisfy a ``local'' version of the sufficient condition~\eqref{eq:indepFromDesc}, which is that for $\tau_C=\mu_C,$
\begin{equation}\label{eq:localIndepFromDesc}
	\big(X_v \indep X_{C \backslash \cdsc{v}} | X_{\prt{v}}\big)_{\tau_C} \quad \text{for all }C \in \calV, \text{ for all }v\in V\colon \fa{v}\subseteq C.
\end{equation}
% Our optimization approach to \eqref{pb:LIMID} requires to work with specific junction trees, namely rooted junction trees, that we introduce in this section.
Given a vector of moment $\tau_C$ in the local polytope of a junction tree $(\calV,\calA)$, satisfying \eqref{eq:localIndepFromDesc} is not a sufficient condition for $\tau_C$ to be the moments of a distribution $\mu_V$ that factorizes on $G$.
But it becomes a sufficient condition under the additional assumption that $(\calV,\calA)$ is a ``rooted junction tree'', a notion that we introduce in this section, and develop in more detail in~\Cref{app:RJT}.

%%%%%%%%%%%%%%%%%%%%%%%%%%%%%%%%%%%%%%%%%%%%%%%%%%%%%%%%%%%%%%%%%%%%%%%%%%%%%%%
\subsection{Definition and main properties}
\label{sub:definition_and_main_properties}
%%%%%%%%%%%%%%%%%%%%%%%%%%%%%%%%%%%%%%%%%%%%%%%%%%%%%%%%%%%%%%%%%%%%%%%%%%%%%%%

% \inlAP{Put in appendix the RJT of POMDP}

% Remark that in junction tree $\calG$, the running intersection property implies that the subgraph of $\Gcl$ induced by the nodes $C$ in $\Vcl$ containing a vertex $v$ is a tree.
% And for any orientation of the edges of $\calG$ the junction tree that turn it into a rooted tree, this subgraph becomes a rooted tree.

Let $\calG= (V,E)$ be a junction tree on $G=(\Vcl,\Acl)$ and $v\in V$ a vertex of $G$.
Then, thanks to the running intersection property, the subgraph $\calG_v$ of $\calG$ made of all nodes $C\in \Vcl$ containing $v$ is a tree.
Moreover, any orientation of the edges of $\calG$ that makes it a rooted tree, also makes $\calG_v$ a rooted tree, and we denote $C_v$ its root node.

\begin{de}
\label{def:rjt}
A \emph{rooted junction tree} (\rjt)
% $\Gcl = (\Vcl,\Acl)$
on $G= (V,E)$ is
a rooted tree with nodes in $2^V$, such that
\begin{itemize}
	\item[(i)] its underlying undirected graph $\Gcl = (\Vcl,\Acl)$ is a junction tree,
	\item[(ii)] for all $v\in V$, we have $\fa{v} \subseteq C_v$,
\end{itemize}
where $C_v$ is the \emph{root clique of $v$} defined as the root node of
the subgraph $\calG_v$ of $\calG$ induced by the nodes $C \in \Vcl$ containing $v$.

Let $\Gcl$ be an \rjt on $G$, and $v$ a vertex of $V$. Given $C \in \Gcl$, let $\offspring{C}=\{v\in V: C_v = C\}$, which we call the \emph{offspring} of $C,$ and let $\check{C}$ denote $C \backslash \offspring{C}$.%$\comGO{I added this because $\check{C}_v$ was not defined. It is ok like this?}
\end{de}
See Figure~\ref{fig:jt-rjt-example} for a graphical example of this notion.
Note that an \rjt always exists: Indeed, the cluster graph composed of a single vertex $C= V$ is an \rjt.
Algorithms to build interesting \rjt are provided in~\Cref{sub:building_a_rjt}.

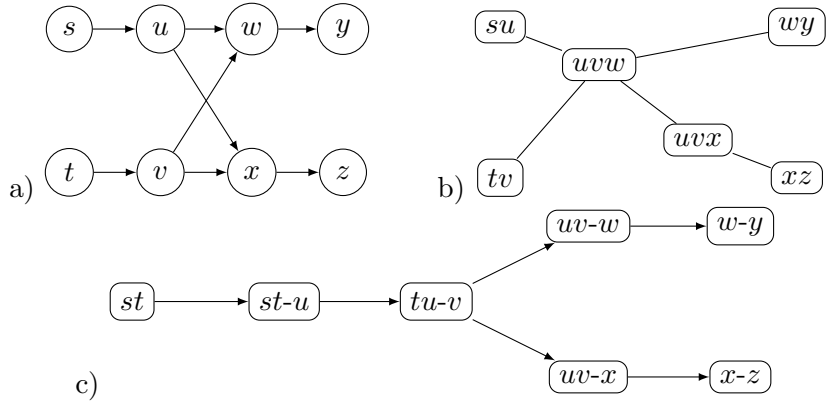
\begin{figure}
\begin{center}
a)
\begin{tikzpicture}
\def\l{1.2}
\def\h{1.9}
\node[sta] (s) at (0*\l,1*\h) {$s$};
\node[sta] (t) at (0*\l,0*\h) {$t$};
\node[sta] (u) at (1*\l,1*\h) {$u$};
\node[sta] (v) at (1*\l,0*\h) {$v$};
\node[sta] (w) at (2*\l,1*\h) {$w$};
\node[sta] (x) at (2*\l,0*\h) {$x$};
\node[sta] (y) at (3*\l,1*\h) {$y$};
\node[sta] (z) at (3*\l,0*\h) {$z$};
\draw[arc] (s) -- (u);
\draw[arc] (t) -- (v);
\draw[arc] (v) -- (w);
\draw[arc] (v) -- (x);
\draw[arc] (u) -- (w);
\draw[arc] (u) -- (x);
\draw[arc] (w) -- (y);
\draw[arc] (x) -- (z);
\end{tikzpicture}
\quad\quad b)
\begin{tikzpicture}
\def\l{1.3}
\def\h{1}
\node[draw,rounded corners] (C1) at (0*\l,2*\h) {$su$};
\node[draw,rounded corners] (C2) at (0*\l,0*\h) {$tv$};
\node[draw,rounded corners] (C3) at (1*\l,1.5*\h) {$uvw$};
% \node[draw,rounded corners] (C4) at (2*\l,2*\h) {$vwx$};
\node[draw,rounded corners] (C4) at (2*\l,00.5*\h) {$uvx$};
\node[draw,rounded corners] (C5) at (3*\l,2*\h) {$wy$};
\node[draw,rounded corners] (C6) at (3*\l,0*\h) {$xz$};
\draw (C1) -- (C3);
\draw (C2) -- (C3);
\draw (C3) -- (C4);
\draw (C3) -- (C5);
\draw (C4) -- (C6);
\end{tikzpicture}
\\
\enskip c)
\begin{tikzpicture}
\def\l{2}
\def\h{1}
\node[draw, rounded corners] (C1) at (-2*\l,1*\h) {$st$};
\node[draw, rounded corners] (C2) at (-1*\l,1*\h) {$st$-$u$};
\node[draw, rounded corners] (C3) at (0*\l,1*\h) {$tu$-$v$};
% \node[draw, rounded corners] (C4) at (1*\l,1*\h) {$uv$-$w$};
\node[draw, rounded corners] (C4) at (1*\l,0*\h) {$uv$-$x$};
\node[draw, rounded corners] (C5) at (1*\l,2*\h) {$uv$-$w$};
\node[draw, rounded corners] (C6) at (2*\l,2*\h) {$w$-$y$};
\node[draw, rounded corners] (C7) at (2*\l,0*\h) {$x$-$z$};
\draw[arc] (C1) -- (C2);
\draw[arc] (C2) -- (C3);
\draw[arc] (C3) -- (C5);
\draw[arc] (C3) -- (C4);
\draw[arc] (C5) -- (C6);
\draw[arc] (C4) -- (C7);
\end{tikzpicture}
\end{center}
\caption{a)~A directed graph $G$, b)~a junction tree on $G$, and c)~a rooted junction tree on $G$, where, for each cluster $C$, we indicate on the left part of the labels the vertices of $C\backslash \offspring{C}$, and on the right part the vertices of $\offspring{C}$.}
\label{fig:jt-rjt-example}
\end{figure}

% , which satisfies the running intersection property, and such that there is a bijection $c \mapsto C_v$ between $V$ and $\Vcl$ such that $\prt{v} \cup v \in C_v$, and $v \notin \Prt{C_v}$, where $\Prt{C_v}$ is the unique parent of $C_v$ in $\Gcl$.
% \inlVC{Tentative introduction for Theorem \ref{theo:rjtFactorization} and definition of local polytope}
% A \emph{vector of moments} $(\tau_C)_{C \in \calV}$ on an \rjt is a distribution defined by the marginal law of each cluster $C \in \calV$. A marginal law has to satisfy two types of constraints : consistency constraints, which mean that two marginals are equal on their shared variables, and normalization. Let $\calL$ denote the set of consistency constraints and normalization. In the literature, $\calL$ is called the \emph{local polytope}. Junction trees have interesting property for the factorization of the distribution on the graph. Indeed,
Theorem \ref{theo:rjtFactorization}, which is a natural generalization of the well-known Proposition~\ref{prop:factorizationDirectedGraph}, ensures that given a vector of moments on an \rjt that satisfies local independences, we can construct a distribution on the initial directed graphical model which admits these moments as marginals.

% \comGO{Mettre en avant le théorème}

\begin{theo}\label{theo:rjtFactorization}
%\comAP{Check that it does not exist in the literature on strong junction trees / rooted cluster trees}
Let $\mu$ be a vector of moments in the local polytope of an \rjt $\calG$ on $G= (V,E)$.
Suppose that for each vertex $v$, according to $\mu_{C_v}$, the variable $X_v$ is independent from its non-descendants in $G$ that are in $C_v$, conditionally to its parents.
Then there exists a distribution $\bbP_\mu$ on $\calX_V$ factorizing on $G$ with moments $\mu$.
\end{theo}

\begin{rem}\label{rem:smallOffspring}
By adding nodes to an \rjt, we can always turn it into an \rjt satisfying $\offspring{C_v} = \{v\}$ for each vertex $v$ in $\Va$. Indeed, suppose that $\offspring{C} = \{v_1,\ldots,v_k\}$, where $v_1,\ldots,v_k$ are given along a topological order.
It suffices to replace the node $C$ by $C_1 \rightarrow C_2  \rightarrow \dots \rightarrow C_k$, where $C_i = C\backslash \{v_{i+1},\ldots,v_k\}$. Note that for such RJTs we have $\check{C}_v=C_{v} \backslash \{v\}.$
\end{rem}

\begin{rem}\label{rem:strongJunctionTree}
% \comAP{todo: update }
%comAP{Check that our theorem below cannot be generalized to strong junction trees (note that there is a problem in the definition of elimination ordering compatible with the sequence of decisions)}
% \comAP{
\citet[beginning of Section 4]{jensen1994influence} introduces a similar notion of \emph{strong junction tree}.
It relies on the notion of \emph{elimination ordering} for a given \emph{influence diagram} with perfect recall.
The main difference is that a strong junction tree is a notion on an influence diagram, where the set of decision vertices and their orders play a role, when \rjts rely on the underlying digraph.
% , which is defined for influence diagrams but not for \limids.
The notion of strong junction tree is obtained by replacing (ii) in the definition of an \rjt by: ``given an elimination ordering, if $(C_u,C_v)$ is an arc, there exists an ordering of $C_v$ that respects the elimination ordering such that $C_u \cap C_v$ is before $C_v \backslash C_u$ in that ordering''.
An \rjt is a strong junction tree.
Conversely, a strong junction tree is not necessarly an \rjt. Indeed, \citet[Figure 4]{jensen1994influence} shows an example of strong junction where there is $v \in V$ such that $\fa{v} \subsetneq C_v$.
% if the elimination ordering is a topological ordering.
As strong junction trees is a notion on influence diagram and not on graphs, Theorem~\ref{theo:rjtFactorization} has no natural generalization for strong junction trees.
% \comAP{However, if we generalize the notion of elimination ordering to ID, we can generalize the remaining of the paper to strong junction trees.}
% This definition is equivalent to our one if the elimination ordering is a topological ordering.
% \inlAP{Add that we can generalize the Theorem and the approach to strong junction trees, but that it is more complicated.}
% }
\end{rem}

% \begin{rem}
% \inlAP{Our notion of \rjt is closely related to strong junction trees introduced by\citet{jensen1994influence}. (decision order matters not the whole graph structure)}
% \end{rem}

% \begin{rem}
% \comGO{This remark really belongs at the end of section 2 since it actually has nothing to do with RJTs...}
% The reader familiar with probabilistic graphical model theory will note that, as we are considering local polytopes only for junction trees, the above defined local polytope is equal to the \emph{marginal polytope} \citep{wainwright2008graphical}, that is, the set
% $$\big\{(\tau_C)_{C \in \calV} \colon \text{ there exists a distribution $\mu_V$ on $ \calX_V$ satisfying } \sum_{x_{V\backslash C}}\mu_V = \tau_C \text{ for all }C \in \calV \big\}.$$
% \end{rem}

%%%%%%%%%%%%%%%%%%%%%%%%%%%%%%%%%%%%%%%%%%%%%%%%%%%%%%%%%%%%%%%%%%%%%%%%%%%%%%%
\subsection{Building an \rjt}
\label{sub:building_a_rjt}
%%%%%%%%%%%%%%%%%%%%%%%%%%%%%%%%%%%%%%%%%%%%%%%%%%%%%%%%%%%%%%%%%%%%%%%%%%%%%%%

Although $(\{V\},\emptyset)$ is a rooted junction tree, the concept has only practical interest if it is possible to construct {\rjt}s with small cluster nodes.
In that respect, note that any \rjt must satisfy, for all $u,v \in V,$ the implication
\begin{equation}
\label{eq:rjt_prop}
\left .\begin{aligned}
\exists w \in V\: \st &C_v \rcarrow C_w\: \text{and} \: u \in \fa{w}\:\\
\text{and}\qquad &C_u \rcarrow C_v
\end{aligned}
\right \} \Rightarrow u \in C_v,
\end{equation}
where $C \rcarrow C'$ denotes the existence of a $C$-$C'$ path in the \rjt $\calG$ considered. %, i.e., $C \rcarrow[\calG] C'$.
This notation will be used throughout this section.
Indeed, since $u \in C_u$ and $\fa{w} \subset C_w$ by definition, and since $C_u \rcarrow C_v \rcarrow C_w$, the running intersection property implies $u \in C_v$.
This motivates Algorithm~\ref{alg:buildRJT}, a simple RJT construction algorithm which propagates iteratively elements present in each cluster node to their parent cluster node.
Let $\preceq$ be an arbitrary topological order on $G$, and $\max_{\preceq}C$ denote the maximum of $C$ for the topological order $\preceq$.
The algorithm maintains a set $C'_v$
for each vertex $v$, which coincide at the end of the algorithm with the nodes $C_v$ in the \rjt produced.
We denote by $\D{v}'$ is the set $C'_v \backslash \{v\}$.
As we will show, Algorithm~\ref{alg:buildRJT} produces an \rjt $\calG =(\calV,\calA)$ which is minimal for $\preceq$, in the sense that it satisfies a converse of \eqref{eq:rjt_prop}.
% And the $C_v$ maintained /by the algorithm will be equal to the root node of $v$ in the \rjt produced.

\begin{algorithm}[H]
\caption{Create an \rjt given a topological order}
\label{alg:buildRJT}
\begin{algorithmic}[1]
\STATE \textbf{Input} $G = (V,E)$ and a topological order $\preceq$ on G
\STATE \textbf{Initialize} $C'_v = \emptyset$ for all $v \in V$ and $\calA' = \emptyset$
\FOR{each node $v$ of $V$ taken in reverse topological order $\preceq$}
\STATE $C'_v \leftarrow  \fa{v} \cup \bigcup_{w: (v,w) \in \calA'}\D{w} $ \label{step:Cvdef}
\IF{$\D{v}' \neq \emptyset$}
\STATE $u \leftarrow \max_{\preceq}\big(\D{v}\big)$ \label{step:arc}
\STATE $\calA' \leftarrow \calA' \cup (u,v)$
\ENDIF
% \STATE Set $K_v \leftarrow \{v\}$
% \FOR{$u \in \cld[G]{v}$}
% \FOR{$C$ on the path from $C'_v$ to $C'_u$ in $\calG$}
% \STATE $C\leftarrow C \cup \{v\}$
% \ENDFOR
% \FOR{$v_0 \preceq u' \prec v  $ }
% \IF{$u' \in $}
% \STATE
% \ENDIF{}
% \STATE $u \leftarrow \max_\preceq \{u' \prec v \colon u' \in \bigcup_{w \in \dsc[\calG]{C'_v}}\prt[G]{w}\}$
% \ENDFOR
% \STATE Add $(C'_u,C'_v)$ to $\calA$
% \ENDFOR
\ENDFOR
\STATE $\calA \leftarrow \{(C'_u,C'_v) \mid (u,v) \in \calA'\}$
\STATE \textbf{Return} $\calG = \big  ((C'_v)_{v \in V} , \calA \big )$
\end{algorithmic}
\end{algorithm}
% \inlGO{ Unless I am mistaken line 4 in Algorithm 1 is wrong ($(u,v)$ should be $(v,u)$ and $u$ should be renamed $x$ for clarity) and should be 
% $$\textstyle C_v \leftarrow  \fa{v} \cup \bigcup_{x: (v,x) \in \calA'} \check{C}_x$$
% }

% \inlAP{Define $\max_{\preceq}C$ as the maximum of $C$ for the topological order $\preceq$}

% \comGO{The algorithm only worked if the graph was connected and the case of the root was not handled correctly. I modified it so that it works even if $G$ is disconnected. However, for the proof we should assume that $G$ is connected to keep the proof simple.}

\begin{rem}
Algorithm~\ref{alg:buildRJT} takes as input a topological order on $G$.
For a practical use, we recommend to use Algorithm~\ref{alg:build_smallest_RJT2} in Appendix~\ref{sec:algorithm_rjt}, which builds simultaneously the \rjt and a ``good'' topological order.
\end{rem}
For instance, for any topological order on the graph of the chess example of Figure~\ref{fig:example_chess}, Algorithm~\ref{alg:buildRJT} produces the \rjt illustrated on Figure~\ref{ex:chess_rjt}. 

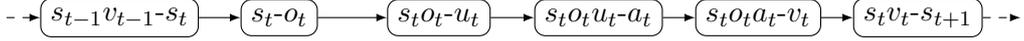
\begin{figure}
\begin{center}
\begin{tikzpicture}
\def\l{2.1}
\node[draw,rounded corners] (s) at (0*\l,0) {$s_{t-1}v_{t-1}$-$s_t$};
\node[draw,rounded corners] (o) at (1*\l,0) {$s_t$-$o_t$};
\node[draw,rounded corners] (u) at (2*\l,0) {$s_to_t$-$u_t$};
\node[draw,rounded corners] (a) at (3*\l,0) {$s_to_tu_t$-$a_t$};
\node[draw,rounded corners] (v) at (4*\l,0) {$s_to_ta_t$-$v_t$};
\node[draw,rounded corners] (t) at (5*\l,0) {$s_tv_t$-$s_{t+1}$};
\draw[arc] (s) -- (o);
\draw[arc] (o) -- (u);
\draw[arc] (u) -- (a);
\draw[arc] (a) -- (v);
\draw[arc] (v) -- (t);
\draw[arc,<-,dashed] (s.west) -- ++(-0.5,0);
\draw[arc,->,dashed] (t.east) -- ++(0.5,0);

\end{tikzpicture}
\end{center}
\caption{Rooted junction tree produced by Algorithm~\ref{alg:buildRJT} on the example of Figure~\ref{fig:example_chess}. The offspring of a node is to the right of symbol -.}
\label{ex:chess_rjt}
\end{figure}
% \jo{$|$ where is it defined?}
The following proposition, whose proof can be found in \Cref{app:RJT} shows that Algorithm~\ref{alg:buildRJT} builds the minimal \rjt .
\begin{prop}
\label{prop:algcorrect}
Algorithm~\ref{alg:buildRJT} produces an \rjt such that the root node $C_v$ of $v$ is $C'_v$, satisfying $\offspring{C_v} = \{v\}$, that admits $\preceq$ as a topological order, and such that $(u \in C_v) \Rightarrow (u \preceq v)$. Moreover, its cluster nodes are minimal in the sense that
\begin{equation}
\label{eq:minimality_rjt}
u \in C_v \Rightarrow \left \{ \begin{aligned}
\exists w \in V\: \st &C_v \rcarrow C_w\:\: \text{and} \:\: u \in \fa{w},\:\\
&C_u \rcarrow C_v.
\end{aligned}
\right.
\end{equation}
\end{prop}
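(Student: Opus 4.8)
\textbf{Proof plan for Proposition~\ref{prop:algcorrect}.}

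The plan is to prove the statement in four stages, tracking the sets $C'_v$ maintained by Algorithm~\ref{alg:buildRJT} as it processes vertices in reverse topological order. First I would establish the structural facts about the output graph: that the arcs $\calA'$ on $V$ form a rooted forest (each $v$ gets at most one outgoing arc in step~\ref{step:arc}, and since $u = \max_\preceq(\D{v})$ with $u \prec v$, no directed cycle can appear), hence $\calG$ is a rooted tree after contracting $v \mapsto C'_v$; and that $\fa{v} \subseteq C'_v$ by the very definition in step~\ref{step:Cvdef}. The key invariant to prove by induction on the reverse topological order is: after $v$ is processed, $C'_v \subseteq \{u : u \preceq v\}$, i.e.\ $\D{v}' \subseteq \{u : u \prec v\}$. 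This holds because $\fa{v} \subseteq \{u : u \preceq v\}$ (topological order), and each $\D{w}$ propagated up from a child $w$ (with $(v,w) \in \calA'$) satisfies $\D{w} \subseteq \{u : u \prec w\}$ by induction, while $w \prec v$ since the arc $(v,w)$ was created when processing some node and $v = \max_\preceq(\D{w}\cup\dots)$... — more precisely, $w$ was processed before $v$ and has $v$ as the max of its $\D{w}$, so every element of $\D{w}$ is $\preceq v$, and $w$ itself, being in the offspring region relative to $v$, satisfies $w \prec v$. Putting these together gives $C'_v \subseteq \{u : u \preceq v\}$, which immediately yields $(u \in C_v) \Rightarrow (u \preceq v)$, and in particular $v = \max_\preceq C'_v$, so $\preceq$ is a topological order on $\calG$ with $v$ the unique sink-most element of its own clique — forcing $\offspring{C_v} = \{v\}$ once we know $C_v = C'_v$.

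Second, I would prove the running intersection property, which is the heart of showing $\calG$ is a junction tree. The claim is that for any $u \in V$, the set of clusters $C'_v$ containing $u$ is connected in $\calG$, and that this connected subtree is rooted at $C'_u$. This follows from the propagation mechanism: if $u \in C'_v$ with $v \neq u$, then $u$ entered $C'_v$ either via $\fa{v}$ (so $u \in \prt{v}$, and then one checks $u$ is also in $C'_w$ for the child $w$ through which... actually via the arc construction) or via propagation from a child $w$ with $u \in \D{w}$; in the latter case $u \in C'_w$ and the arc $(C'_v, C'_w) \in \calA$, so we can trace a path of clusters all containing $u$ down towards a cluster where $u$ first appeared, which must be $C'_u$ (since $u$ is the max of its own $\D{w}'$ whenever it propagates, the propagation of $u$ always flows toward $u$, and $u \in C'_u$ always by $\fa{u} \ni u$). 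The cleanest way to organize this: show that the arc $(u',v)\in\calA'$ created in step~\ref{step:arc} always has $u \in C'_{u'}$ whenever $u \in \D{v}'$ — this is exactly the propagation in step~\ref{step:Cvdef} at the time $u'$ is processed — so by transitivity every cluster on the path from $C'_v$ up to $C'_u$ contains $u$. That gives running intersection and simultaneously identifies $C_v$ (the root of the subtree of clusters containing $v$) with $C'_v$, since $v \in C'_v$ and $v \notin C'_w$ for any strict ancestor $w$ of $v$ in the cluster tree, by the topological-order invariant from stage one ($C'_w \subseteq \{u : u \preceq w\}$ and $w \prec v$ forces $v \notin C'_w$). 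This confirms conditions (i) and (ii) of Definition~\ref{def:rjt}.

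Third, I would verify $\offspring{C_v} = \{v\}$: we have $v \in \offspring{C_v}$ since $C_v = C'_v$ is the root of the $v$-subtree; and no other $v' \in C'_v$ has $C_{v'} = C'_v$ because $C'_v \setminus\{v\} = \D{v}' \subseteq \{u : u \prec v\}$, and for such $u \prec v$ the cluster $C'_v$ is not the root of $u$'s subtree — indeed $C'_v$ has a parent cluster (or $v$ is the global sink) and the propagation that put $u$ into $C'_v$ also puts $u$ into clusters further down, so the $u$-subtree extends strictly below $C'_v$, whence $C_u \neq C'_v$. Finally, for the minimality statement~\eqref{eq:minimality_rjt}: suppose $u \in C_v = C'_v$ with $u \neq v$. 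Then $u \in \D{v}'$, and by how $\D{v}'$ was built, $u$ was propagated into $C'_v$ from the chain of arcs — tracing \emph{downward} from $C'_v$ through $\calA'$, $u$ stays in every cluster until it reaches a cluster $C'_w$ where $u$ was introduced via $\fa{w}$, i.e.\ $u \in \fa{w}$; and this $w$ satisfies $C_v = C'_v \rcarrow C'_w = C_w$ in $\calG$. Also $C_u \rcarrow C_v$ because $C'_u$ is the root of the subtree of clusters containing $u$, and $C'_v$ is one such cluster, so there is a path from $C_u$ down to $C_v$. That establishes~\eqref{eq:minimality_rjt}.

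\textbf{Main obstacle.} I expect the delicate point to be the bookkeeping in stage two: keeping straight, as the algorithm runs in reverse topological order, which cluster a given element $u$ was ``born'' in and that all propagation of $u$ flows monotonically toward $C'_u$ in the final tree — the arcs $\calA'$ are oriented child-to-parent in the cluster sense, but elements propagate from $v$ up to its parent-by-$\preceq$, so one must be careful that ``up'' in the cluster tree (toward sinks of $\preceq$, where offspring are small) is consistent with the direction in which $u$ was pushed, and that $u = \max_\preceq(\D{w})$ exactly when the arc out of $w$ points toward the cluster that will eventually contain $C_u$. A clean lemma isolating ``if $u \in \D{v}'$ at the end then there is a directed $\calA'$-path $v = w_0 \to w_1 \to \dots \to w_k = u$ with $u \in C'_{w_i}$ for all $i$'' will carry essentially all the weight, and proving it requires an induction that interleaves with the construction order.
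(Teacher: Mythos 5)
Your plan follows essentially the same route as the paper's own proof: the topological invariant $(u \in C_v) \Rightarrow (u \preceq v)$, the upward-propagation lemma that $u$ belongs to every cluster on a directed path from $C_u$ to $C_v$ (from which running intersection, the identification $C_v = C'_v$, the offspring property and the second half of \eqref{eq:minimality_rjt} all follow), and the downward trace to a cluster $C_w$ where $u$ entered via $\fa{w}$ for the first half of \eqref{eq:minimality_rjt}. The only blemishes are directional slips in the write-up (for $(v,w) \in \calA'$ you assert $w \prec v$, whereas $v = \max_{\preceq}\D{w}$ gives $v \prec w$, and the ``up/down'' language in the offspring argument is inverted), but the facts you actually invoke — every element of $\D{w}$ is $\preceq v$, and some cluster containing $u$ is a proper ancestor of $C'_v$ — are correct, so the argument is sound and matches the paper's.
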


\section{MILP formulation for influence diagrams}
\label{sec:MILP}
%%%%%%%%%%%%%%%%%%%%%%%%%%%%%%%%%%%%%%%%%%%%%%%%%%%%%%%%%%%%%%%%%%%%%%%%%%%%%%%
%%%%%%%%%%%%%%%%%%%%%%%%%%%%%%%%%%%%%%%%%%%%%%%%%%%%%%%%%%%%%%%%%%%%%%%%%%%%%%%

Given that \Cref{alg:buildRJT} produces an \rjt such that $\offspring{C_v} = \{v\}$ for all $v \in V,$ we will assume in the rest of the paper that all the RJTs considered satisfy this property.
% (and for completeness we consider the general case in Appendix~\ref{app:an_exact_nlp_formulation_generalized}).
As noted in Remark~\ref{rem:smallOffspring}, any \rjt can be turned into an \rjt satisfying this property by adding more nodes.
%We denote $\D{v}$ the set $C_v \backslash \offspring{C_v} = C_v \backslash \{v\}$.
In the rest of the paper, we work with the following variant of the local polytope $\calL_\calG^0$ defined in Equation~\eqref{eq:localPolytope}
$$ \calL_\calG = \bigg\{(\mu_{C_v},\mu_{\D{v}})_{v \in V} \colon (\mu_{C_v})_{v \in V} \in \calL_\calG^0 \text{ and } \mu_{\D{v}} = \sum_{x_v} \mu_{C_v} \bigg\},$$
where moments $\mu_{\D{v}}$ have been introduced. This is for convenience, and all the results could have been written using  $\calL_\calG^0$.

% \comAP{In the definition of the local polytope, we do not put $\delta$. And we then put it after. We should be consistent.}

On graphical models, the inference problem, which is hard in general, becomes easy on junction trees.
Since problem~\eqref{pb:LIMID} is $\NP$-hard even when restricted to graphs of treewidth 2 \citep{maua2012solving},
unless $P = \NP$, the situation is strictly worse for the MEU problem associated with influence diagrams.
However, we will see in this section that, given a rooted junction tree, we can obtain %~\comGO{"Nice" is too vague here. Could we qualify this better? We clearly have polynomial algorithms for soluble graphs if the treewidth is bounded or logarithmic in the size of the graph. But this is not what we show. Anything precise to qualify the MILP we obtain?}
mathematical programs to solve the \meu problem~\ref{pb:LIMID} with a tractable number of variables and constraints provided that cliques are of reasonable size. 
%\comGO{Would it make sense to introduce the concept of rooted-treewidth? This is what this depends on no?}\comAP{The concept already exists on undirected and it not exactly what we want, but we could put it.}
% In this section we propose Mathematical Programming formulation to solve the \limid problem~\ref{pb:LIMID}.
We first obtain an NLP formulation in Section~\ref{sub:an_exact_nlp_formulation}, and then linearize it into an exact mixed integer linear program (MILP) in Section~\ref{sub:milp_formulation}.
% We finally show that, if there exists a \rjt with given properties, an exact LP formulation can be obtained, and derive necessary and sufficient conditions on $G$ for the existence of such an \rjt in Section~\ref{sub:lp_cases}.

% \inlAP{Some intro needed here : inference on JT easy, LIMID on RJT, formulation with reasonable number of constraints, and give the path NLP to MILP to LP}

%%%%%%%%%%%%%%%%%%%%%%%%%%%%%%%%%%%%%%%%%%%%%%%%%%%%%%%%%%%%%%%%%%%%%%%%%%%%%%%
\subsection{An exact Non Linear Program formulation}
\label{sub:an_exact_nlp_formulation}
%%%%%%%%%%%%%%%%%%%%%%%%%%%%%%%%%%%%%%%%%%%%%%%%%%%%%%%%%%%%%%%%%%%%%%%%%%%%%%%
Consider a \emph{Parameterized Influence Diagram} (PID) encoded as the quadruple $(G,\calX,\mathfrak{p},r)$, where $G=(V,E)$ is
a graph with set of vertices $V$ partitioned into $(\Va, \Vs)$, with
$\calX = \prod_{v\in V} \calX_v$ the support of the vector of random variables attached to all vertices of $G$,
$\mathfrak{p}= \{p_{v|\prt{v}}\}_{v\in \Vs}$ is the collection of fixed and assumed known conditional probabilities,
and $r = \{r_v\}_{v\in\Vl}$ is the collection of reward functions\footnote{we remind the reader that $\Vl$ is the set of utility vertices as introduced in \Cref{subsec:framework}}
%\jo{I added that part not to loose what was $\Vl$; could even do that for the 2 others Vs}
$r_v:\calX_v \rightarrow \bbR$ which we will also view as vectors $r_v\in \bbR^{|\calX_v|}.$

% Let $\calG$ be an \rjt. In the remainder of the paper, we suppose, w.l.o.g. given Remark~\ref{rem:smallOffspring}, that $\calG$ is such that $\offspring{C_v} = \{v\}$ for all $v \in V$, where, as before, $\offspring{C} $ denotes the offspring $ \{u\in V: C_u = C\}$ of $C$

For $(G,\calX,\mathfrak{p},r)$ a given PID, and $\calG$ a given \rjt,
we introduce the following polytope
%  \vlil{following strenghtening of the local polytope $\calL_\calG$ defined in~\eqref{eq:localPolytope}}
% \vl{Technically we defined $\calL_\calV$}
% \begin{equation}\label{eq:Pbar}
%     \overline{\calP}(G, \calX, \mathfrak{p}, \calG) =
%     \left\{\mu \in \calL_\calG \colon \mu_{\D{v}} =  \sum_{x_v \in \calX_v} \mu_{C_v} \text{ and }
%     \mu_{C_v} = \mu_{\D{v}} p_{v|\prt{v}}
%      \text{ for all } v \in \Vs
%     %\text{ for all $v$ in $\Vs$}
%     \right\} ,
% \end{equation}
\begin{equation}\label{eq:Pbar}
    \overline{\calP}(G, \calX, \mathfrak{p}, \calG) =
    \big\{\mu \in \calL_\calG \colon
    \mu_{C_v} = \mu_{\D{v}} \, p_{v|\prt{v}}
     \text{ for all } v \in \Vs
    %\text{ for all $v$ in $\Vs$}
    \big\} .
\end{equation}
% Letting $\D{v}=C_v \backslash \{v\}$ by $\D{v}$, and $\sum_{x_v \in \calX_v} \mu_{C_v}$ by $\mu_{\D{v}}$, we can rewrite $\overline{\calP}(G, \calX, \mathfrak{p}, \calG)$ more explicitly as
% $\left\{\mu \in \calL_\calG \colon \mu_{\D{v}} =  \mu_{C_v} = \mu_{\D{v}} p_{v|\prt{v}}
%      \text{ for all } v \in \Vs \right\}$.\comGO{It would be nicer to introduce $\D{v}$ directly in the displaystyle equation above IMHO to avoid the repetition.}
where the equality $\mu_{C_v} = \mu_{\D{v}} \, p_{v|\prt{v}}$ should be understood functionally, i.e., meaning that  $\mu_{C_v}(x_{C_v}) = \mu_{\D{v}}(x_{\D{v}}) \, p_{v|\prt{v}}(x_v|x_{\prt{v}}), \: \forall x_{C_v} \in \calX_{C_v};$ we will use such functional (in)equalities throughout the paper.
% Moments $\mu_{\D{v}}$ are introduced only \comGO{This sentence is not necessary and not true anymore since we introduced the distinction between $\calL$ and  $\calL^0$.}  for notational ease and not listed in the local polytope $\calL_\calG$ or in $\overline{\calP}(G, \calX, \mathfrak{p}, \calG)$.
% where $\mu_{\D{v}} =  \sum_{x_v \in \calX_v} \mu_{C_v}$.
% of moments in the local polytope $\calL_\calG$ satisfying additional constraints on the conditional probabilities of state vertices.
We omit the dependence of $\overline{\calP}$ in $(G, \calX, \mathfrak{p}, \calG)$ when the context is clear.
% which is the subset of the local polytope $\calL_\calG$ defined in~\cref{eq:localPolytope} that contains the distributions whose
% by defining the polytope %of all moments satisfying marginalisation constraints
Consider the following Non Linear Program (NLP)
\begin{subequations}\label{pb:NLP}
\begin{alignat}{2}
\max_{\mu,\delta}  \enskip & \sum_{v \in \Vl} \langle r_v , \mu_v \rangle & \quad &\\
\mathrm{s.t.} \enskip
& \mu \in \overline{\calP}(G, \calX, \mathfrak{p}, \calG) \label{eq:NLP_localPolytope}\\
%& \mu_{\D{v}} = \sum_{x_v \in \calX_v} \mu_{C_v} && \forall v \in V \\
%&\mu_{C_v} = p_{v|\prt{v}} \, \mu_{\D{v}}, &&\forall v \in \Vs, \label{eq:NLP_fact_Vs}\\
&\delta \in \Delta \\
 & \mu_{C_v} = \delta_{v|\prt{v}} \, \mu_{\D{v}}, &&\forall v \in \Va, \label{eq:NLP_fact_Va}
 % & \mu_{\D{v}} = \sum_{x_v \in \calX_v} \mu_{C_v} && \forall v \in \Va, \label{ex:marginalizationDv}
 % & \delta_{v | \prt{v}} \geq 0 \text{ and } \sum_{x_v}\delta_{v | \prt{v}}(x_v,x_{\prt{v}}) = 1, && \forall v \in \Va,
\end{alignat}
\end{subequations}
\noindent where the inner product notation $\langle r_v ,
\mu_v \rangle$ stands for $\sum_{x_v} \mu_v(x_v) r_v(x_v)$.
Note that the constraints $\delta \in \Delta$ are implied by the other ones.
% $\delta \geq 0$ and $\sum_{x_v}  \delta_{v|\prt{v}}(x_{\fa{v}})=1$
% are implied by the fact that $\mu \in \calL$ and by~\eqref{eq:NLP_fact_Va}.
% Note that, as $\mu$ is in the local polytope, Equation~\eqref{eq:NLP_fact_Va}
% % In the remainder of the paper,

% Given $(G,\rho)$, a parametrized ID, and $\calG$, an \rjt, we introduce the set \inlGO{Do we mean to be talking about the same $G$ as in $\overline{P}$ or about some other $G'$ here? I think it is too complicated to introduce a $G'$ already here (although it is obviously necessary later). So using $G$ is fine. But the style of the sentence that start by "Let $(G,\rho)$ etc" is confusing to the reader, because it gives the impression we are reintroducing the graph that has been there all along... I would introduce the set by saying: note that by introducing the following set, we can reformulate the previous optimization problem more concisely as}

 % \comGO{Do we assume that $\mu_{\D{v}}$ is a moment listed and constrained in $\calL$ or should we add $\sum_{x_v} \mu_{C_v}=\mu_{\D{v}}$?}
 % \comAP{So as to have polytopes on the same moments to make the comparisons easier in the next section, I made the choice not to include them in the local polytope}

% This fact, enables Theorem~\ref{theo:rjtFactorization} to show the equivalence between the NLP and MEU problems as shown in the following theorem.

\begin{theo}\label{theo:NLPexact}
The (NLP) Problems~\eqref{pb:NLP} and~\eqref{eq:NLP_calS} are equivalent to the MEU Problem~\eqref{pb:LIMID},
in the sense that they have the same value and that, if $(\mu, \delta)$ is a feasible solution for Problem~\eqref{pb:NLP}, then $\delta$ defines an admissible policy for Problem~\eqref{pb:LIMID}, and $\mu$ characterizes the moments of the distribution induced by $\delta$.
\end{theo}
\begin{proof}%[Proof of Theorem~\ref{theo:NLPexact}]
If $(\mu,\delta)$ is a solution of~\eqref{pb:NLP}, then $\mu$ is is a solution of~\eqref{eq:NLP_calS}, and conversely, if $\mu$ is a solution of~\eqref{eq:NLP_calS}, by definition of $\calS(G)$, there exists $\delta$ such that $(\mu,\delta)$ is a solution of~\eqref{pb:NLP}, which gives the equivalence between~\eqref{pb:NLP} and ~\eqref{eq:NLP_calS}.

Let now $(\mu,\delta)$ be an admissible solution of Problem~\eqref{pb:NLP}. Then $\delta$ is an admissible solution of the MEU problem. We now prove that $\mu$ corresponds to the moments of the distribution $\bbP_{\delta}$ induced by $\delta$, from which we can deduce that $\bbE_{\delta}\Bp{\sum_{v\in \Vl} r_v(X_v)} = \sum_{v \in \Vl} \langle r_v , \mu_v \rangle$.
Note that, if $A$, $P$, and $D$ are disjoint subsets of $V$, %\comGO{I would put this paragraph in the proof of Thm 2, because it comes out of the blue here (no transition).}
$\mu$ is a distribution on $\calX_V$, $\mu_{A\cup P \cup D}$ is the distribution induced by $\mu$ on $\calX_{A \cup P \cup D}$, and $p_{D|P}$ is a conditional distribution of $D$ given $P$,
then
\begin{equation}
\label{eq:conditionalDistributionGivesIndependence}
    \mu_{A \cup P \cup D}=\mu_{A \cup P} \, p_{D|P} \qquad \Longrightarrow \qquad X_D \indep X_A \mid X_P,
\end{equation}
where the independence is according to $\mu$.
By~\eqref{eq:conditionalDistributionGivesIndependence}, we have that the vector $\mu$ satisfies the conditions of Theorem~\ref{theo:rjtFactorization}, and hence corresponds to a distribution $\bbP_\mu$ that factorizes on $G$. Furthermore, 
% we have $\bbE_{\mu}\Bp{\sum_{v\in \Vl} r_v(X_v)} = \sum_{v \in \Vl} \langle r_v , \mu_v \rangle$, and 
constraint~\eqref{eq:Pbar}
% $\mu_{C} = \mu_{V\backslash \offspring{C}} \prod_{v \in \offspring{C}}p_{v|\prt{v}}$
ensures that $\bbP_\mu(X_v|X_{\prt{v}})= p_{v|\prt{v}}$ for all $v \in \Vs$, which yields the result.
Conversely, let $\delta$ be an admissible solution of the MEU Problem~\eqref{pb:LIMID}, 
and $\mu$ be the vector of moments induced by $\bbP_{\delta}$. We have $\mu_{C_v} = \mu_{\D{v}}p_{v|\prt{v}}$ for $v$ in $\Vs$ 
and $\mu_{C_v} = \mu_{\D{v}}\delta_{v|\prt{v}}$ for $v$ in $\Va$, 
and $(\mu,\delta)$ is a solution of~\eqref{pb:NLP}. 
Furthermore, $\bbE_{\delta}\Bp{\sum_{v\in \Vl} r_v(X_v)} = \sum_{v \in \Vl} \langle r_v , \mu_v \rangle$, and \eqref{pb:NLP} is equivalent to the MEU Problem~\eqref{pb:LIMID}.
\end{proof}

% \comAP{If not too big, put here the NLP for PO-MDP, otherwise in Appendix}

By introducing the following set of moments
\begin{equation}\label{eq:defQcalG}
\calS(G) = \big\{
\mu \in \overline{\calP}  \colon \exists \delta \in \Delta, \mu_{C_v} = \mu_{\D{v}} \delta_{v|\prt[G]{v}} \text{ for all $v$ in $\Va$}
\big\},
% \mu \in \calL_\calG \colon \exists \delta \in \Delta_{G'}
% \left|
% \begin{array}{ll}
% \mu_{C_v} = \mu_{\D{v}} p_{v|\prt[G]{v}}, & \text{for all $v$ in $\Vs$} \\
% \mu_{C_v} = \mu_{\D{v}} \delta_{v|\prt[G']{v}}, & \text{for all $v$ in $\Va$}
% \end{array}
% \right.
\end{equation}
we can reformulate the Problem~\eqref{pb:NLP} more concisely as
\begin{equation}\label{eq:NLP_calS}
    \max_{\mu \in \calS(G)}\sum_{v \in \Vl} \langle r_v, \mu_v\rangle.
\end{equation}
$\calS(G)$ is the set of moments corresponding to distributions induced by feasible policies: $\mu $ is in $ \calS(G)$ if there exists $\delta$ in $\Delta$ such that $\mu_{C_v}(x_{C_v}) = \bbP_{\delta}(X_{C_v} = x_{C_v})$ for all $v$ 
and $x_{C_v}$.
% $\calS(G)$ 
It is non-convex in general as shown by the examples in the proof of Theorem~\ref{theo:validEqualSoluble}.
% It will sometimes be convenient to use the following equivalent of \eqref{pb:NLP}
%\jo{not obvious for me. Is there a one line argument we could add here?} 
However, we show in Section~\ref{sec:soluble_} that $\calS(G)$ is a polytope if $G$ is soluble,
a property identifying ``easy'' IDs.

%%%%%%%%%%%%%%%%%%%%%%%%%%%%%%%%%%%%%%%%%%%%%%%%%%%%%%%%%%%%%%%%%%%%%%%%%%%%%%%
\subsection{MILP formulation}
\label{sub:milp_formulation}
%%%%%%%%%%%%%%%%%%%%%%%%%%%%%%%%%%%%%%%%%%%%%%%%%%%%%%%%%%%%%%%%%%%%%%%%%%%%%%%
% We are now equipped to build a MILP for Problem~\eqref{pb:LIMID}, where moments $(\mu_C(x_C))$ for $C \in \calV$ and policies $\delta_{v|\prt{v}}(x_{\fa{v}})$ are variables. When written in the context of \limid, the equation of Theorem~\ref{theo:goodSolution} gives,
% \begin{equation}\label{eq:polynomialConstraint}
% 	\mu_{C} = \mu_{V\backslash \offspring{C}} \prod_{v \in \offspring{C} \cap (\Vl \cup \Vs)}p_{v|\prt{v}}\prod_{v \in \offspring{C} \cap (\Va)}\delta_{v|\prt{v}}.
% \end{equation}
% which is unfortunately non-linear.
The NLP \eqref{pb:NLP} is hard to solve due to the non-linear constraints~\eqref{eq:NLP_fact_Va}.
But by Theorem~\ref{theo:NLPexact}, Problems~\eqref{pb:LIMID} and \eqref{pb:NLP} are equivalent, and in particular admit the same optimal solutions in terms of $\delta$.

We recall that there always exists at least one optimal policy which is deterministic (and therefore integral) for Problem~\eqref{pb:LIMID}, that is a policy $\delta$ such that
\begin{equation}\label{eq:integralityDelta}
\delta_{v|\prt{v}}(x_{\fa{v}}) \in \{0,1\}, \quad \forall x_{\fa{v}} \in \calX_{\fa{v}},\: \forall v \in \Va.
\end{equation}
We can therefore add integrality constraint~\eqref{eq:integralityDelta} to~\eqref{pb:NLP}.
With this integrality constraint, Equation~\eqref{eq:NLP_fact_Va} becomes a \emph{logical constraint}, \ie a constraint of the form $\lambda y = z$ with $\lambda$ binary and continuous $y$ and $z$.
% continuous variable $\mu_{C_v}(x_{C_v})$ is equal to $\mu_{D_v}(x_{D_v})$ is binary variable $\delta_{v|\prt{v}}(x_{\fa{v}})$ is equal to $1$ and to $0$ otherwise.
% \comGO{Can we define this for the readers who would not know this concept. Or is there at least a reference for this concept? The terminology is not transparent and somebody who has never encountered that term cannot make any sense of that sentence.}.
Such constraints can be handled by modern MILP solvers such as \texttt{CPLEX} or \texttt{Gurobi}, that can therefore directly solve Problem~\eqref{pb:NLP}.
% As it is the only non-linear constraints in \eqref{pb:NLP}, problem~\ref{pb:NLP} with integrality constraints~\eqref{eq:integralityDelta} can therefore be directly handled by such solvers.
Alternatively, by a classical result in integer programming, we can turn Problem~\eqref{pb:NLP} into an equivalent MILP by replacing constraint \eqref{eq:NLP_fact_Va} by its McCormick relaxation \citep{Mccormick:1976}.
% using the McCormick relaxation, it is a classical result in integer programming that we obtain an equivalent formulation as a MILP by replacing the non-linear constraint by McCormick's relaxation, introduced in \citet{Mccormick:1976}.
% Let $\calB$ denote the set of admissible upper bounds over
% $\mu_{\D{v}}$. More precisely $b\in \calB$ if and only if for all $v\in \Va$ the optimal $\mu_{\D{v}}$ of MEU is such that
% $ \mu_{\D{v}} \leq b_v \leq 1$.
For a given $\mathfrak{p}$, %\jo{not used before next page... weird to mention here then...}\vl{actually the bound depends on it}
let $b$ be a vector of upper bounds $b_{\D{v}}(x_{\D{v}})$ satisfying
 % $\big(b_{\D{v}}(x_{\D{v}})\big)_{v \in \Va,x_{v} \in \calX_{\D{v}}}$ be a vector of upper bounds

\begin{equation}
    \label{eq:admissible_bound}
    \bbP_{\delta'}\big( X_{\D{v}} = x_{\D{v}} \big) \leq b_{\D{v}}(x_{\D{v}})
\qquad \forall \delta'\in\Delta, \quad \forall v \in \Va, \quad \forall x_{\D{v}} \in \calX_{\D{v}}.
    %\quad \text{ for all $v$ in $V$ and $x_{\D{v}}$ in } \calX_{\D{v}}
\end{equation}

\noindent For such a vector $b$,
we say that, for a given node $v$, $(\mu_{C_v},\delta_{v|\prt{v}})$ satisfies McCormick's inequalities (see \cref{app:McCormick}) if %\vl{can it simply be McCormick(v,b)?} 
\begin{equation}\label{eq:McCormick}
%	{\rm {McCormick}}
%\Big(\eqref{eq:NLP_factorization}\Big) =
\left\{ \begin{array}{l}
\displaystyle\mu_{C_v} \geq \mu_{\D{v}}+ (\delta_{v|\prt{v}} -1) \, b_{\D{v}},\\
\displaystyle\mu_{C_v} \leq \delta_{v|\prt{v}}  \, b_{\D{v}}, \\
\mu_{C_v} \leq \mu_{\D{v}}.
\end{array} \right.
\tag{${\rm McCormick}(v,b)$}
\end{equation}
% \inlGO{McCormick here depends on $G$ which specifies $\prt{v}$. Should we make this explicit? Like using ${\rm McCormick}(C_v,\prt{v},b)$. If we don't feel that this is necessary, it would make more sense to use the lighter notation  ${\rm McCormick}(v,b)$...}
% \vlil{I agree with ${\rm McCormick}(v,b)$.}

Note that the last inequality $\mu_{C_v} \leq \mu_{\D{v}}$ can be omitted in our case as it is implied by the marginalization constraint $\mu_{\D{v}} = \sum_{x_v} \mu_{C_v}$ in the definition of $\calL_\calG$.
Given the upper bounds provided by $b$, we introduce the polytope of valid moments and decisions satisfying all McCormick constraints:
\begin{equation}\label{eq:PolytopePbDefinition}
    \calQ^b(G, \calX, \mathfrak{p}, \calG) = \Big\{(\mu,\delta) \in \calL_\calG \times \Delta \colon {\rm McCormick}(v,b) \text{ is satisfied for all } v \in \Va  \Big\}.
\end{equation}
  % but this inequality is already implied by marginalization constraints \eqref{ex:marginalizationDv}.
% Consequently, for any $b \in \calB$ we define
% \vlil{Definir $Q^b$ en $(\mu,\delta)$ et $P^b$ comme sa projection}

% \begin{equation}\label{eq:Pb}
%     \calQ^b(G, \calX, \mathfrak{p}, \calG) =
%     \left\{(\mu,\delta) \colon
%     \left|
%     \begin{array}{l}
%     \mu \in \overline{\calP} \\
%     \delta \in \Delta \\
%     {\rm McCormick}(C_v,b_v) \text{ is satisfied for all $v$ in $\Va$}
%     \end{array}
%     \right.
%     % \quad
%     % %\text{ for all $v$ in $\Vs$}
%     \right\} .
% \end{equation}

With the previously introduced notation %\jo{notations: nope, notation. \url{https://en.wiktionary.org/wiki/notation}} 
the MEU Problem~\eqref{pb:LIMID} is equivalent to the following MILP:
\begin{subequations}\label{pb:MILP}
\begin{alignat}{2}
\max_{\mu,\delta}  \enskip & \sum_{v \in \Vl} \langle r_v , \mu_v \rangle & \quad &\\
\text{\st} \enskip
& \mu \in \overline{\calP}(G, \calX, \mathfrak{p}, \calG) \label{eq:MILPcommon}\\
% & \mu \in \calL \\
% & \mu_{\D{v}} = \sum_{x_v \in \calX_v} \mu_{C_v} && \forall v \in V \\
% &\mu_{C_v} = p_{v|\prt{v}} \, \mu_{\D{v}}, &&\forall v \in \Vs, \label{eq:MILP_fact_Vs}\\
% & {\rm McCormick}(C_v,b_v), &&\forall v \in \Va. \label{eq:MILP_fact_Va} \\
& \delta \in \detpol \\
& (\mu,\delta) \in \calQ^b
% & {\rm McCormick}(C_v,b_v) && \forall v \in \Va \label{eq:MILPmcCormick} \\
% & \mu_{\D{v}} = \sum_{x_v \in \calX_v} \mu_{C_v} && \forall v \in \Va, \label{eq:MilpMarginalization}
 % & \delta_{v | \prt{v}} \in \{0,1\}, && \forall v  \in \Va
\end{alignat}
\end{subequations}
\noindent where $\detpol$ is the set of deterministic policies and contains the integrality constraints \eqref{eq:integralityDelta}.

\begin{rem}\label{rq:b_equal_1}
    The strength of the McCormick constraints~\eqref{eq:McCormick} depends on the quality of the bounds $b_{\D{v}}$ on $\mu_{\D{v}}$.
    As for a solution $\mu$ of Problem~\eqref{pb:MILP}, $\mu_{\D{v}}$ corresponds to a probability distribution,
    the simplest admissible bound over $\mu_{\D{v}}$ is simply $b = 1$.
    % Unfortunately in this case we have $\calQ^b = \overline{\calP}$
    % \comGO{No ! $\calQ^b$ is a polytope in $(\mu,\delta)$ while $\overline{\calP}$ is a polytope in $\mu$ alone. Moreover, $\calQ^b$ does not encode the constraints on $\mu$ implied by $\mathfrak{p}$. Even more tricky $\mu_{\D{v}}$ is not included in $\calL_{\calG}$ and so it is not included in $\overline{\calP}$ and then included later in $\overline{\calP}$ and in any case it is present in McCormick. So in fact there is a bug in the definition of $\calQ^b$ because $\mu_{\D{v}}$ is not part of the vector $\mu \in \calL_{\calG}$...}
    % \inlGO{We could say: 
    Unfortunately, McCormick's constraints are loose in this case: 
    we show in Appendix~\ref{sub:using_} that, for any $\mu$ in $\overline{\calP}$, there exists $\delta$ in $\Delta$ such that $(\mu,\delta)$ satisfies the McCormick constraints.
    Hence, when $b=1$, McCormick constraints fail to retain any information about the conditional independence statements encoded in the associated nonlinear constraints.
    Since $\delta$ does not appear outside of the McCormick constraints, their sole interest in that case is to enable the branching decisions on $\delta$ to have an impact on $\mu$.
%
%     , but for $b=1$ they are implied by the marginalization constraints that hold for any vector in the local polytope and fail to retain any information about the conditional independence statement encoded in the associated nonlinear constraint.
%
% We will see in Section~\ref{sub:using_} that the McCormick constraints do not impact the linear relaxation of~\eqref{pb:MILP} when $1$ is used as upper bound $b_{\D{v}}$ on $\mu_{\D{v}}$, and in 
Appendix~\ref{sub:mccormick_inequalities_with_well_chosen_bounds_are_useful} gives an example showing that McCormick constraints do retain information about the conditional independence if bounds $b_{\D{v}}$ smaller than $1$ are used.
% Finally, Section~\ref{sub:algorithm_to_choose_good_quality_bounds} provides an algorithm to compute bounds $b_{\D{v}}$ smaller than $1$.
%     However, a smaller $b$ enables to improve the relaxation.
%     Appendix~\ref{sub:mccormick_inequalities_with_well_chosen_bounds_are_useful} provides an example showing that, when $b$ is well-chosen, $\calQ^b$ is not equal to $\overline{\calP}$ and that McCormick inequalities can strengthen the relaxation.
%     In 
Finally,
    Appendix~\ref{sub:algorithm_to_choose_good_quality_bounds} provides a dynamic programming algorithm that efficiently computes such a~$b$.%\comGO{Could we say something like "using dynamic programming" or "a Viterbi like algorithm"?}
\end{rem}

\section{Valid cuts}
\label{sec:validCuts}

Classical techniques in integer programming such as branch and bound algorithms rely on solving the relaxation of the MILP to obtain a lower bound on the value of the objective.
For Problem~\eqref{pb:MILP} the relaxation is likely to be poor, and so the MILP is not well solved by off-the-shelf solvers: indeed as explained above, when $b=1$, the McCormick inequalities fail completely to enforce in the linear relaxation the conditional independences that are encoded in the nonlinear constraints, and using a better bound $b$ does not completely adress the issue.
 % are loose and so it efficiently solved by off-the-shelf solvers\comGO{The end of this sentence does not make sense}.
In this section, we introduce valid cuts to strengthen the relaxation and ease the MILP resolution.
A \emph{valid cut} for a MILP is an (in)equality that is satisfied by any solution of the MILP, but not necessarily by solutions of its linear relaxation.
A family of valid cuts is stronger than another when the former yields a polytope strictly included in the latter.

%%%%%%%%%%%%%%%%%%%%%%%%%%%%%%%%%%%%%%%%%%%%%%%%%%%%%%%%%%%%%%%%%%%%%%%%%%%%%%%
\subsection{Constructing valid cuts}
\label{sub:valid_inequalities}
%%%%%%%%%%%%%%%%%%%%%%%%%%%%%%%%%%%%%%%%%%%%%%%%%%%%%%%%%%%%%%%%%%%%%%%%%%%%%%%

% \inlAP{We can characterize the value of the linear relaxation. Indeed, we build the RJT and the linear relaxation corresponds to the MILP on the \limid obtained by adding arcs }

% \inlVC{Reformulate this section : what is }

By restricting ourselves to vectors of moments $\mu \in \overline{\calP}$, we have imposed 
$$\bbP_{\mu}(X_v|X_{V\backslash\dsc{v}})=\bbP_{\mu}(X_v|X_{\D{v}})=p_{v|\prt{v}} \quad \text{for all $v$ in }\Vs, $$%\jo{$X_{V\backslash\dsc{(v)}}$ non defined}
because $\mu \in \overline{\calP}$ must satisfy $\mu_{C_v} = \mu_{\D{v}}p_{v|\prt{v}}$.
If we could impose as well the nonlinear constraints $\mu_{C_v} = \mu_{\D{v}}\delta_{v|\prt{v}}$ for $v$ in $\Va$, we would be able to impose that decisions encoded in $\mu$ at the nodes $a \in \Va$ satisfy $\bbP_{\mu}(X_a|X_{C_a \backslash \{a\}})=\bbP_{\mu}(X_a|X_{\prt{a}})$. Unfortunately, in general, %McCormick's relaxation is not sufficient to enforce $\mu_{C_v} = \mu_{\D{v}}\delta_{v|\prt{v}}$ for $v$ in $\Va$ for solutions of the linear relaxation.
The constraint $\mu_{C_v} = \mu_{\D{v}}p_{v|\prt{v}}$ for $v$ in $\Vs$ is linear only because $p_{v|\prt{v}}$ is a constant that does not depend on $\delta$.
But, as an indirect consequence of setting the conditional distributions $p_{v|\prt{v}}$ for $v \in \Vs$, there are other conditional distributions that do not depend on $\delta$.
Indeed, for some pairs of sets of vertices $C,D$ with $D\subseteq C$, the conditional probabilities $\bbP_{\delta}(X_D = x_D |X_{C\backslash D} = x_{C\backslash D})$ are identical for any policy $\delta$. We can therefore introduce valid cuts of the form
\begin{equation}\label{eq:validIneqalityForm}
	\mu_C = \mu_{C\backslash D} \, p_{D|C\backslash D}.
\end{equation}
While these additional constraints are not needed to set the value of the conditionals on $v \in \Vs$ and the conditional independences of the form $X_v \indep X_{V\backslash\dsc{v}} \mid X_{\prt{v}}$ for $v \in \Vs$, they can be useful to enforce some of the conditional independences that should be satisfied by $\mu$ at decision nodes. In particular, if  there exists a subset $M$ of $C\backslash D$ such that $p_{D|C\backslash D}=p_{D|M}$, then \eqref{eq:validIneqalityForm} enforces that for any $v \in \Va \cap (C\backslash (D \cup M)),$ we have $\bbP_{\mu}(X_a|X_{D\cup M})=\bbP_{\mu}(X_a|X_{M})$.
% These valid cuts are of the form $\mu_C = \mu_{C\backslash D}p_{D|C\backslash D}$ with $D \subseteq C$ such that the conditional probability of $D$ given $C\backslash D$ does not depend on the policy.
Clearly, the larger $D$, the stronger the valid cut. This motivates the following definition.
% The following definition and proposition introduce the strongest cut of this form.
% We start by formalizing this notion of independent conditional distributions.

% Using the characterization of Markov blanket given by Proposition~\ref{prop:mbInCdefinition} we introduce the following definition.

%\inlGO{Given the previous discussion I would suggest that $B$ in the next definition should be called $D$. This would be more consistent.}
\begin{de}
Given a set of vertices $C$, we define $\B{C}$ to be the largest subset $D$ of $C$ such that, for any parametrization of $G$,
there exists $p_{D|C\backslash D}$ such that $\bbP_\delta(X_D|X_{C\backslash D}) = p_{D|C\backslash D}$ holds for any policy $\delta$.
We define $\M{C}$ as $C \backslash \B{C}$.
\end{de}

It is not obvious that a largest such set exists and is unique, and therefore that $\B{C}$ is well defined. We prove that it is the case later in this section. 
As for now, if we accept that $\B{C}$ is well defined, 
then the equalities
\begin{equation}\label{eq:validInequality}
	\mu_{C} = \mu_{\M{C}} p_{\B{C}|\M{C}}, \quad \forall C \in \calV,
\end{equation}
are the strongest valid cuts of the form \eqref{eq:validIneqalityForm} that we can obtain for Problem~\eqref{pb:MILP}.
% that is~, they are satisfied by any integer solution, but not necessarily by solutions in the linear relaxation, and that they are the strongest such inequalities that can be written for $\mu_C$.
% Thus, as we defined $\calQ^b$ (see~\eqref{eq:PolytopePbDefinition}) by
% strengthening $\overline{\calP}$ with McCormick's inequalities,
% we now leverage~\Cref{eq:validInequality} to strengthen $\overline{\calP}$ and obtain the following polytope
We can then define $\Pfree$ as the polytope we obtain when we strengthen $\overline{\calP}$ with our valid cuts:
\begin{equation}
	\label{eq:Pfree}
	\Pfree(G,\calX,\mathfrak{p},\calG) =
	\left\{\mu \in \overline{\calP} \colon \mu_{C_v} = p_{\B{C}_v|\M{C}_v} \sum_{x_{\B{C}_v}} \mu_{C_v}  \text{ for all }v \in \Va\right\}.
\end{equation}

In the definition of $\Pfree$, we decided to introduce valid cuts of the form \eqref{eq:validInequality} only for sets of vertices $C$ of the form $C_v$ with $v \in \Va$. %\jo{$v$instead of $V$ here, right?} %This choice is heuristic.
This is to strike a balance between the number of constraints added and the number of independences enforced.
% \Cref{theo:rjtFactorization} explains the rationale behind this choice: enforcing equalities of the form \eqref{eq:validIneqalityForm} for $C \in \calV$
% it shows that enforcing exactly conditional independences inside the cliques of the \rjt suffices to obtain the moments of the distributions on the graphical model,
% to  only.\jo{sentence to be polished... not sure I understand the full meaning.+ it is too long}
Our choice is however heuristic, and it could notably be relevant to introduce constraints of the form \eqref{eq:validInequality} for well chosen $C \subsetneq C_v$.

Figure~\ref{fig:valid} provides an example of ID where valid cuts \eqref{eq:validInequality} reduce the size of the initial polytope. To compute $\B{C}$, we have used the characterization in the next section.
% We show in \Cref{sec:numerical} that theses inequalities numerically do help to solve the MILP.
% Practically, using these inequalities requires to compute $p_{\B{C}|\M{C}}$.
% \Cref{prop:augmentedGraphIndependence} ensures that, if we solve the inference problem on the \rjt for an arbitrary policy, \eg one where decisions are taken with uniform probability, we can deduce $p_{\B{C}|\M{C}}$ from the distribution $\mu_C$ obtained.

\begin{figure}
\begin{center}

\begin{tikzpicture}
\def\l{1.2}
\node[act] (a) at (0,0) {$a$};
\node[sta] (u) at (1,0) {$u$};
\node[sta] (v) at (2,0) {$v$};
\node[act] (b) at (3,0) {$b$};
\node[sta] (w) at (4,0) {$w$};

\draw[arc] (a) -- (u);
\draw[arc] (u) -- (v);
\draw[arc] (v) -- (b);
\draw[arc] (b) -- (w);
\draw[arc] (a) to[bend right] (w);
\draw[arc] (u) to[bend right] (w);
\end{tikzpicture}
\end{center}
\caption{Valid cut \eqref{eq:validInequality} with $C = \{a,u,v,b\}$ and $\B{C} = \{u\}$ is not implied by the linear inequalities of \eqref{pb:MILP}.
Indeed, suppose that $\calX_a = \calX_v = \{0\}$, while $\calX_u = \calX_b = \{0,1\}$.
Then the solution defined by $\mu_{auvb}(0,i,0,i)=0.5$ and $\mu_{auvb}(0,i,0,1-i)=0$ for $i\in \{0,1\}$ is in the linear relaxation of \eqref{pb:MILP} but does not satisfy \eqref{eq:validInequality}.
}
\label{fig:valid}
\end{figure}
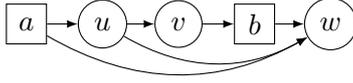
\subsection{Characterization of $\B{C}$}
\label{sub:characterization_of_Cfree}
%%%%%%%%%%%%%%%%%%%%%%%%%%%%%%%%%%%%%%%%%%%%%%%%%%%%%%%%%%%%%%%%%%%%%%%%%%%%%%%

In order to characterize $\B{C}$, we need some concepts from graphical model theory. The first notions make it possible to identify conditional independence from properties of the graph.
Let $D \subset V$ be a set of vertices. A trail ${v_1} \rightleftharpoons \dots \rightleftharpoons {v_n}$ is \emph{active given $D$} if,
whenever there is a v-structure ${v_{i-1}} \rightarrow {v_{i}} \leftarrow {v_{i+1}}$,
then $v_i$ or one of its descendant is in $D$, and no other vertex of the trail is in $D$.
Two sets of vertices $B_1$ and $B_2$ are said to be \emph{d-separated} by $D$ in $G$, and we will denote this property by $B_1\, \bot\, B_2 \mid D$, if there is no active trail between $B_1$ and $B_2$ given $D$.
We have $X_{B_1} \indep X_{B_2} \mid X_D$
% \text{
for any distribution that factorizes on $G$
% } $$
if and only if $B_1$ and $B_2$ are d-separated by $D$  \citep[Theorem 3.4]{koller2009probabilistic}.

The other notion we need is the one of \emph{augmented model} \citep[Chapter 21]{koller2009probabilistic}.
% To show that $\bbP_{\delta}(X_B = x_B |X_D = x_D)$ is independent from $\delta$ for any distribution that factorizes on $G$, we need to consider the conditional distribution $\delta$ as a random element itself.
Consider $(G,\rho)$, a PID with $G=(\Vs,\Va,E)$, and let $V=\Va \cup \Vs$. %\vl{notations}
For each $v \in \Va$, we introduce a vertex $\vartheta_v$ and a corresponding random variable $\theta_v$.
The variable $\theta_v$ takes its value in the space $\Delta_v$ of conditional distributions on $X_v$ given $X_{\prt{v}}$. 
%$\{\vartheta_v\}_{v \in \Va}$
Let $\Gaug$ be the digraph with vertex set $V_\Gaug = V \cup \vartheta_{\Va}$, where $\vartheta_{\Va} = \{\vartheta_v\}_{v \in \Va}$,
%\jo{ could not find where $\vartheta(\Va)$ was introduced, need to be clarify. moreover we use $\vartheta_{\Va}$ in \Cref{prop:augmentedGraphIndependence}, is it the same?}
% \vl{added definition of $\vartheta(V_a)=\vartheta_{\Va}$}
and arc set $\Arcs_\Gaug=\Arcs \cup \{(\vartheta_v,v), \forall v \in \Va \}$.
Such a graph $\Gaug$ is illustrated on \Cref{fig:example_valid}, where vertices in $\Gaug \backslash G$ are represented as rectangles with rounded corners.
% \comAP{Update Figure 8 with an example we use}.
The \emph{augmented model} of $(G,\rho)$ is the collection of distributions factorizing on $\Gaug$ such that $\calX_v$ is defined as in $\rho$ for each $v$ in $V$, $\calX_{\theta_v} = \Delta_v$, and 
%  such that $\calX_{v}$
% We denote by $\prt[\Gaug]{v}$ the set of parents of a vertex $v$ in $\Gaug$.
% An \emph{augmented model} $\bbP_\Gaug$ of the PID is a directed graphical model on $\Gaug$ such that
 % of the distributions such that, for each vertex $a\in\Va$, and defined by
\begin{equation}\label{eq:augmentedModel}
	\bbP\left(X_v = x_v | X_{\prt[\Gaug]{v}} = x_{\prt[\Gaug]{v}}\right) =
	\begin{cases}
	\theta^o_v(x_v | x_{\prt{v}}) & \text{if } v\in \Va, \\
	p_{v|\prt{v}}(x_v|x_{\prt{v}}) & \text{if } v\in \Vs,
	\end{cases}
\end{equation}
where $x_{\prt[\Gaug]{v}}=(x_{\prt[G]{v}},\theta^o_v)$ for $v \in \Va$, and $x_{\prt[\Gaug]{v}}=x_{\prt[G]{v}}$ for $v \in \Vs$.
% \inlGO{Dans l'équation \eqref{eq:augmentedModel}, il me semble qu'il faut mettre $X_{\prt[G]{v}}$ plutôt que $X_{\prt[\Gaug]{v}}$ parce que $\vartheta_v \in \prt[\Gaug]{v}$ et donc ce n'est pas juste sinon...}

A distribution of the augmented model is specified by choosing the distributions of the $\theta_v$.
In the rest of the paper, we denote by $\bbP_{\Gaug}$ the distribution of the augmented model with uniformly distributed $\theta_v$ for each $v$ in $\Va$.% and denote it by $\bbP_{\Gaug}$.
% \comGO{Why don't we assume that the distribution on $\theta_v$ is general and request statement holding for all distribution in the graphical model. This is usually how it is done..}
% \comAP{No reason for that, except that the definition of $\bbP_{\Gaug}$ conditioned on $\delta$ is sloppy if the weight of $\delta$ is zero. I have updated accordingly.}

With these definitions, a policy $\delta$ can now be interpreted as a value taken by $\theta_{\Va}$, and we have
\begin{equation}\label{eq:augmentedModelDsep}
	\bbP_{\delta}(X_D= x_D | X_M= x_M) = \bbP_{\Gaug}(X_D = x_D | X_M = x_M, \theta_{\Va} = \delta).
\end{equation}
Note that in general $\bbP_{\Gaug}(X_D = x_D | X_M = x_M)$ is the expected value over $\theta_{\Va}$ of $\bbP_{\theta_{\Va}}(X_D = x_D | X_M = x_M).$
The following result, which is an immediate consequence of \eqref{eq:augmentedModelDsep}, characterizes the pairs $(D,M)$ such that the conditional distribution $\bbP_{\delta}(X_D| X_M)$ is the same regardless of the choice of policy $\delta$.
\begin{prop}\label{prop:augmentedGraphIndependence}
We have $\bbP_{\delta}(X_D| X_M) = \bbP_{\Gaug}(X_D| X_M)$ for any PID on $G$, any policy $\delta$, and any $M$ such that $\bbP_{\delta}(X_M) > 0$ if and only if $D$ is d-separated from $\vartheta_{\Va}$ given $M$ in $\Gaug.$
% In that case, we denote $p_{B|D}$ the probability $\bbP_\delta(X_D | X_M)$ common to all policies $\delta$.
\end{prop}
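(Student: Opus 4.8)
The plan is to show that the two statements are each equivalent to a common intermediate condition involving the augmented-model distribution. First I would make the observation that by~\eqref{eq:augmentedModelDsep}, for every policy $\delta$ with $\bbP_\delta(X_M) > 0$ we have $\bbP_\delta(X_D \mid X_M) = \bbP_{\Gaug}(X_D \mid X_M, \theta_{\Va} = \delta)$, so the condition ``$\bbP_\delta(X_D\mid X_M)$ does not depend on $\delta$'' is the same as ``$\bbP_{\Gaug}(X_D \mid X_M, \theta_{\Va})$ does not depend on the value of $\theta_{\Va}$'', i.e.\ $\big(X_D \indep \theta_{\Va} \mid X_M\big)_{\bbP_{\Gaug}}$. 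The first half of the argument is then to check that this conditional independence, combined with the fact that $\bbP_{\Gaug}$ marginalizes to the same conditional $\bbP_{\Gaug}(X_D\mid X_M)$, is exactly the assertion $\bbP_\delta(X_D\mid X_M) = \bbP_{\Gaug}(X_D\mid X_M)$ for all $\delta$ — so the left-hand side of the proposition is equivalent to $\big(X_D \indep \theta_{\Va}\mid X_M\big)_{\bbP_{\Gaug}}$.

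Next I would invoke the d-separation theorem cited just above the proposition (\citep[Theorem 3.4]{koller2009probabilistic}): since $\bbP_{\Gaug}$ factorizes on $\Gaug$, the conditional independence $\big(X_D \indep X_{\vartheta_{\Va}}\mid X_M\big)$ holds for the distribution $\bbP_{\Gaug}$ (with uniformly distributed $\theta_v$) if and only if $D$ and $\vartheta_{\Va}$ are d-separated by $M$ in $\Gaug$. Combining the two equivalences gives the claim. One technical point to be careful with: d-separation gives conditional independence for \emph{every} distribution factorizing on $\Gaug$, whereas we need the converse direction to hold for the \emph{specific} distribution $\bbP_{\Gaug}$ with uniform $\theta_v$ — and also for every PID on $G$, i.e.\ for every choice of the conditionals $p_{v|\prt v}$ and state spaces $\calX_v$. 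The ``only if'' direction therefore needs the standard fact that d-separation is not merely sufficient but, as a property of the graph, is tight: if $D$ and $\vartheta_{\Va}$ are \emph{not} d-separated by $M$, there is a parametrization (a choice of the $\calX_v$, the $p_{v|\prt v}$, and the uniform $\theta_v$) realizing the dependence along the active trail, which furnishes a PID and a policy witnessing $\bbP_\delta(X_D\mid X_M)\neq \bbP_{\Gaug}(X_D\mid X_M)$. I would phrase this using the well-known faithfulness/tightness of d-separation for directed graphical models rather than reconstructing the explicit parametrization.

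The main obstacle is precisely this last point — making rigorous that the implication ``not d-separated $\Rightarrow$ some policy breaks the equality'' quantified over all PIDs on $G$ is legitimate, since Theorem 3.4 as quoted only directly gives the ``d-separated $\Rightarrow$ independent'' direction and the global statement ``$X_{B_1}\indep X_{B_2}\mid X_D$ for any distribution factorizing on $G$ iff $B_1\bot B_2\mid D$''. The clean way is to note that the ``iff'' in that quoted theorem already packages both directions: the equivalence ``(independence holds for all factorizing distributions) $\iff$ (d-separation)'' is exactly what is needed, and the ``for all PIDs / for all $\delta$'' quantifier in the proposition matches the ``for all factorizing distributions on $\Gaug$'' quantifier once one checks that every distribution of the augmented model factorizes on $\Gaug$ and that, conversely, the family of augmented-model distributions (ranging over PIDs and policies) is rich enough to detect any dependence not forbidden by d-separation. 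I would keep this paragraph short, citing the cited theorem for the equivalence and remarking that the augmented-model family is universal in the required sense because the $\calX_v$ and $p_{v|\prt v}$ are arbitrary.
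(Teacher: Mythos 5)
Your proposal is correct and follows essentially the same route as the paper: the paper likewise treats the statement as an immediate consequence of \eqref{eq:augmentedModelDsep} together with the soundness and completeness of d-separation, which it does not reprove but cites as a known result from causality theory \citep[Proposition 21.3]{koller2009probabilistic}. The care you take over the ``only if'' direction --- that a witnessing dependence must be realizable within the augmented-model family, using the freedom in the choice of $\calX_v$, $p_{v|\prt{v}}$ and of the policies $\delta$ --- is exactly the content of that cited result, so nothing essential differs.
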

Note that this is a particular case of a result known in the causality theory for graphical models~\citep[see \eg][Proposition 21.3]{koller2009probabilistic}.
We have now all the tools to characterize $\B{C}$.

\begin{theo}\label{theo:CfreeCharacterization}
$\B{C}$ exists, is unique, and equal to
$\Big\{ v \in C \colon v \perp \vartheta_{\Va} \,| \, C \backslash \{v\} \Big\}$.%\comGO{The notation $\bot$ seemed undefined so I introduced it with the definition of d-separation.}
\end{theo}
\noindent With this characterization, the reader can check the value of $\B{C}$ on the example of Figure~\ref{fig:valid}. 

If we want to use the valid cuts in~\eqref{eq:Pfree} in practice, we must to compute $\B{C}$ and $p_{\B{C}|\M{C}}$ efficiently.
Theorem~\ref{theo:CfreeCharacterization} ensures that $\B{C}$ is easy to compute using any d-separation algorithm (and more efficient algorithms are presumably possible),
% \comGO{Victor could be looking into this...}, 
and Proposition~\ref{prop:augmentedGraphIndependence} ensures that, if we solve the inference problem on the \rjt for an arbitrary policy, \eg one where decisions are taken with uniform probability, we can deduce $p_{\B{C}|\M{C}}$ from the distribution $\mu_C$ obtained.

Theorem~\ref{theo:CfreeCharacterization} is an immediate corollary of the following Lemma, recently proved by two of the authors \citep[Theorem 1]{VCohenAparmentier}.

% The rest of the section is dedicated to the proof of .
% The following lemma is

\begin{lem}\label{lem:mbDefWithoutMbName}
Let $B$ and $C$ be two sets of vertices. Then
$M^* := \Big\{v \in C \backslash B \colon v \,\nperp\,  B \,|\,  C \backslash (B \cup \{v\}) \Big\}$
is a subset $M$ of $C$ such that
\begin{equation}\label{eq:mbInCdef}
	B \perp C \backslash \left( B \cup M \right) \mid M.
\end{equation}
Furthermore, if $M$ satisfies~\eqref{eq:mbInCdef}, then $M^* \subseteq M, $ so that $M^*$ is a minimum for the inclusion.
\end{lem}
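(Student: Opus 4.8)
The statement is essentially a characterization of a ``minimal Markov blanket inside $C$'': we must show that $M^*$, the set of vertices in $C \backslash B$ that are \emph{not} d-separated from $B$ by the rest of $C$, both d-separates $B$ from $C \backslash (B \cup M^*)$ and is contained in every set $M$ with property~\eqref{eq:mbInCdef}. My plan is to prove the second (minimality) part first, since it is the easy direction, and then tackle the first (sufficiency) part, which I expect to be the crux.

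\textbf{Minimality.}
Suppose $M \subseteq C$ satisfies $B \perp C \backslash (B \cup M) \mid M$. Let $v \in M^*$; I want to show $v \in M$. Suppose for contradiction $v \notin M$. Since $v \in C \backslash B$ and $v \notin M$, we have $v \in C \backslash (B \cup M)$. By the d-separation hypothesis, $B$ is d-separated from $C \backslash (B \cup M)$ given $M$; by the decomposition property of d-separation (if $B \perp A \mid M$ and $v \in A$ then $B \perp \{v\} \mid M$), we get $B \perp \{v\} \mid M$. Now $M \subseteq C \backslash (B \cup \{v\})$ (because $M$ is disjoint from $B$ — wait, we need $M \cap B = \emptyset$: if $M$ contains vertices of $B$, replace $M$ by $M \backslash B$, which still satisfies~\eqref{eq:mbInCdef} since conditioning on extra $B$-vertices cannot create an active trail from $B$; alternatively note that WLOG $M \subseteq C \backslash B$). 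So by the weak union property, from $B \perp \{v\} \mid M$ together with $B \perp (C \backslash (B\cup M)) \mid M$ we obtain $B \perp \{v\} \mid (C \backslash (B \cup \{v\}))$ — more carefully: $B \perp \big(\{v\} \cup (C\backslash(B\cup M))\big) \mid M$ by the composition/contraction axioms, i.e. $B \perp (C \backslash(B \cup M)) \cup \{v\} = C \backslash (B\cup (M\backslash\{v\}))$; then weak union moves everything except $v$ into the conditioning set: $B \perp \{v\} \mid C \backslash (B \cup \{v\})$. This contradicts $v \in M^*$. Hence $M^* \subseteq M$.

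\textbf{Sufficiency.}
This is the main obstacle: we must show $B \perp C \backslash (B \cup M^*) \mid M^*$. The natural route is by contradiction using the structure of active trails. Suppose there is an active trail $T$ from some $b \in B$ to some $w \in C \backslash (B \cup M^*)$ given $M^*$. The idea is to ``walk along $T$'' and produce from it an active trail from $B$ to a vertex $v$ given $C \backslash (B \cup \{v\})$ for some $v \in C \backslash (B \cup M^*)$, contradicting $v \notin M^*$. Concretely, let $v$ be the first vertex of $C \backslash B$ encountered along $T$ starting from $b$ (such a vertex exists since $w$ is one, and the endpoint $b \in B$ is not in $C \backslash B$); let $T'$ be the sub-trail of $T$ from $b$ to $v$. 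No internal vertex of $T'$ lies in $C \backslash B$, in particular none lies in $M^* \subseteq C \backslash B$, so no internal vertex of $T'$ is in the conditioning set of the original separation claim other than possibly via v-structures whose descendants reach $M^*$. The delicate point is that activeness of $T'$ given $C \backslash (B \cup \{v\})$ versus given $M^*$ differ: conditioning on the larger set $C \backslash (B \cup \{v\})$ can (a) block a non-collider internal vertex that happens to lie in $C$, or (b) activate a collider whose descendant lies in $C \backslash (B \cup \{v\})$ but not in (the descendant-closure reachable within) $M^*$. Issue (a) cannot happen because we chose $v$ to be the \emph{first} vertex of $C\backslash B$, so no internal non-endpoint vertex of $T'$ is in $C$ at all — hence also none is in $C \backslash (B \cup \{v\})$, so no non-collider on $T'$ gets blocked. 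For issue (b): a collider $v_i$ on $T'$ is active given $M^*$ because some descendant of $v_i$ is in $M^*$; since $M^* \subseteq C \backslash (B \cup \{v\})$ (as $v \notin M^*$ and $M^* \cap B = \emptyset$), that same descendant witnesses activeness given $C \backslash (B\cup\{v\})$. Finally we must also re-check colliders are still not ``over-activated'' and non-colliders that ARE the endpoint $v$: but $v$ is an endpoint of $T'$, not an internal vertex, so the collider/non-collider condition is not tested at $v$. Thus $T'$ is active from $b \in B$ to $v$ given $C \backslash (B \cup \{v\})$, so $v \,\nperp\, B \mid C \backslash (B \cup \{v\})$, i.e.\ $v \in M^*$ — contradicting $v \in C \backslash (B \cup M^*)$.

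\textbf{Remark on the hard step.}
The genuinely subtle part is the bookkeeping around colliders: one must be careful that ``$v_i$ or one of its descendants lies in $D$'' is monotone in $D$, so enlarging the conditioning set from $M^*$ to $C \backslash (B \cup \{v\}) \supseteq M^*$ preserves collider-activeness, while the choice of $v$ as the first $C\backslash B$-vertex on the trail guarantees the enlargement introduces no newly-blocked non-collider. If the d-separation monotonicity-in-colliders and the ``first vertex'' trick are set up cleanly, the rest is routine. I would also double-check that I may assume $M^* \cap B = \emptyset$ (immediate from the definition $M^* \subseteq C \backslash B$) and handle the trivial edge cases $B = \emptyset$ or $C \backslash (B \cup M^*) = \emptyset$ separately (both making~\eqref{eq:mbInCdef} vacuous).
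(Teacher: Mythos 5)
The paper itself does not prove this lemma (it cites \citep[Theorem 1]{VCohenAparmentier}), so your argument can only be judged on its own merits. Your minimality direction is essentially fine: once you may assume $M \cap B = \emptyset$ (automatic in the intended application, where $B = \vartheta_{\Va}$ is disjoint from $C \subseteq V$), weak union applied to $B \perp C \backslash (B \cup M) \mid M$ immediately gives $B \perp \{v\} \mid C \backslash (B \cup \{v\})$ for any $v \in C\backslash(B\cup M)$, which is exactly your conclusion.

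The sufficiency direction, however, has a genuine gap, precisely at the point you flagged as "the delicate point". You take $v$ to be the \emph{first} vertex of $C \backslash B$ along the active trail $T$, and your contradiction requires $v \notin M^*$ (you use it twice: once to claim $M^* \subseteq C\backslash(B\cup\{v\})$ so that collider activation is preserved, and once at the end, asserting "contradicting $v \in C\backslash(B\cup M^*)$"). But only the endpoint $w$ is known to lie outside $M^*$; the first $C\backslash B$-vertex $v$ may perfectly well be a \emph{collider} of $T$, and colliders are allowed (indeed, required through themselves or a descendant) to meet the conditioning set $M^*$, so $v \in M^*$ is possible. In that case, showing $v \nperp B \mid C\backslash(B\cup\{v\})$ just re-proves $v \in M^*$ and yields no contradiction, and the collider bookkeeping also breaks (a collider of $T'$ whose only witness in $M^*$ is $v$ itself is no longer activated once $v$ is removed from the conditioning set). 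The standard repair is to run the argument on the endpoint $w$ itself with a trail chosen of minimal length: any internal non-collider of a minimal active trail that lay in $C\backslash(B\cup M^*)$ would give a strictly shorter counterexample (its initial sub-trail is still active given $M^*$), hence all internal non-colliders avoid $C\backslash B$, colliders stay activated because $w \notin M^*$ guarantees $M^* \subseteq C\backslash(B\cup\{w\})$, and one concludes $w \in M^*$, a genuine contradiction. (Alternatively, the whole lemma follows from the semi-graphoid axioms plus the intersection property of d-separation, which shows that the family of $M \subseteq C\backslash B$ satisfying \eqref{eq:mbInCdef} is closed under intersection and that its minimum is $M^*$.) As written, your proof does not cover the case where the first $C\backslash B$-vertex on the trail is a collider belonging to $M^*$, so the key step is incomplete.
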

 \citet{VCohenAparmentier} call $M^*$ the \emph{Markov Blanket of $B$ in $C$.} Note that if $C=V$ this is the usual Markov Blanket.

\begin{proof}[Proof of Theorem~\ref{theo:CfreeCharacterization}]
Let $M$ be a subset of $C$. 
Proposition~\ref{prop:augmentedGraphIndependence} ensures that $\bbP_{\delta}(X_{C \backslash M}| X_{M})$ does not depend on $\delta$ for any parametrization if and only if $C \backslash M \perp \vartheta_{\Va} \mid M$. Theorem~\ref{theo:CfreeCharacterization} then follows by letting $B=\vartheta_{\Va}$ in Lemma~\ref{lem:mbDefWithoutMbName}.
\end{proof}

Using the terminology of \citet{VCohenAparmentier}, $\M{C}$ is the \emph{Markov blanket of $\vartheta_{\Va}$ in $C$}.

\begin{figure}
\begin{center}
% \tdplotsetmaincoords{50}{0}
\begin{tikzpicture}%[tdplot_main_coords]
\def\l{2.5}
\def\h{-2.5}

\draw[fill=blue, opacity=0.1] (-0.5*\l,0,-0.5*\h) -- (3.5*\l,0,-0.5*\h) -- (3.5*\l,0,3.6*\h) -- (-0.5*\l,0,3.6*\h);
\draw[thick] (-0.5*\l,0,-0.5*\h) -- (3.5*\l,0,-0.5*\h) -- (3.5*\l,0,3.6*\h) -- (-0.5*\l,0,3.6*\h) -- cycle;

\node at (-0.4*\l,-0.1*\l,2.3*\h) {$G$};

\node[sta] (s1) at (0*\l, 0,2*\h) {$s_1$};
\node[sta] (o1) at (0*\l, 0,0.5*\h) {$o_1$};
\node[uti] (r1) at (0*\l, 0,3*\h) {$r_1$};
\node[act] (a1) at (0.5*\l, 0,0.5*\h) {$a_1$};
\node[draw, rounded corners] (t1) at (0.5*\l, -2,0.5*\h) {$\vartheta_1$};
\node[sta] (s2) at (1*\l, 0,2*\h) {$s_2$};
\node[sta] (o2) at (1*\l, 0,0.5*\h) {$o_2$};
\node[uti] (r2) at (1*\l, 0,3*\h) {$r_2$};
\node[act] (a2) at (1.5*\l, 0,0.5*\h) {$a_2$};
\node[draw, rounded corners] (t2) at (1.5*\l, -2,0.5*\h) {$\vartheta_2$};
\node[sta] (s3) at (2*\l, 0,2*\h) {$s_3$};
\node[sta] (o3) at (2*\l, 0,0.5*\h) {$o_3$};
\node[uti] (r3) at (2*\l, 0,3*\h) {$r_3$};
\node[act] (a3) at (2.5*\l, 0,0.5*\h) {$a_3$};
\node[draw, rounded corners] (t3) at (2.5*\l, -2,0.5*\h) {$\vartheta_3$};
%\jo{I moved the vartheta to avoid overlapping edges...}
\node[sta] (s4) at (3*\l, 0,2*\h) {$s_4$};

\draw[arc] (t1) -- (a1);
\draw[arc] (t2) -- (a2);
\draw[arc] (t3) -- (a3);

\node (tar) at (3,0.6*\h) {};

\draw[arc] (s1) -- (s2);
\draw[arc] (s1) -- (o1);
% \draw[arc] (a1) -- (l1);
\draw[arc] (o1) -- (a1);
% \draw[arc] (l1) -- (a2);
\draw[arc] (a1) -- (s2);
% \draw[->,>=latex] (s1) to[bend left=20] (l1);

\draw[arc] (s2) -- (o2);
\draw[arc] (o2) -- (a2);
% \draw[arc] (a2) -- (l2);
\draw[arc] (s2) -- (s3);
% \draw[arc] (l2) -- (a3);
\draw[arc] (a2) -- (s3);
% \draw[->,>=latex] (s2) to[bend left=20] (l2);

\draw[arc] (s3) -- (o3);
\draw[arc] (o3) -- (a3);
% \draw[arc] (a3) -- (l3);
% \draw[->,>=latex] (s3) to[bend left=20] (l3);

\draw[arc] (a3) -- (s4);
\draw[arc] (s3) -- (s4);

\draw[arc] (s1) -- (r1);
\draw[arc] (s2) -- (r1);
\draw[arc] (a1) -- (r1);

\draw[arc] (s2) -- (r2);
\draw[arc] (s3) -- (r2);
\draw[arc] (a2) -- (r2);

\draw[arc] (s3) -- (r3);
\draw[arc] (s4) -- (r3);
\draw[arc] (a3) -- (r3);

% \draw[arc] (o1) to [bend right=40] (a2);
% \draw[arc] (a1) to [bend right=40] (a2);
% \draw[arc] (o1) to [bend right=40] (a3);
% \draw[arc] (a1) to [bend right=30] (a3);
% \draw[arc] (o2) to [bend right=50] (a3);
% \draw[arc] (a2) to [bend right=40] (a3);

\end{tikzpicture}
\end{center}
\caption{Example of augmented graph $\Gaug$ on a POMDP.}
\label{fig:example_valid}
\end{figure}
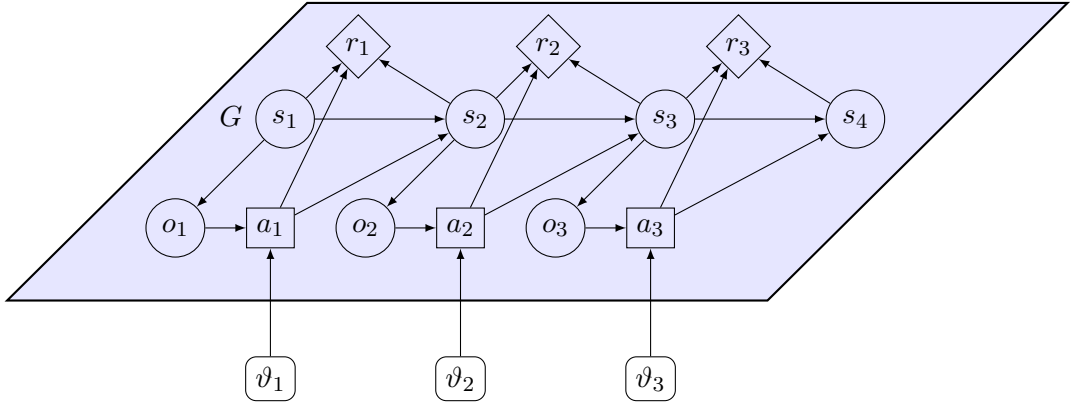
% \inlGO{In Figure~\ref{fig:example_valid}, the $\theta$s should be $\vartheta$s, no?}

% \begin{proof}
% We have $\bbP_{\delta}(X_B | X_D) = \bbP_{\Gaug}(X_B = x_B | X_D = x_D, \theta_{\Va} = \delta) = \bbP_{\Gaug}(X_B = x_B | X_D = x_D) = p_{B|D}(x_B,x_D)$.
% \end{proof}

 % For each state vertex $v$ in $\Vs$, we define $\bbP_\Gaug(X_v = x_v | X_{\prt{v}} = x_{\prt{v}})$.
 % For each action vertex $a\in\Va$, variable $\theta_a$ is uniformly distributed, and $\bbP_\Gaug(X_v = x_v | X_{\prt{v} = x_v}, \theta_v = \theta_{\phi(v)}) = \theta_v(x_v | x_{\prt{v}})$.

% \vlil{Comment on the complexity of constructing $\mb_C(B)$}

% We will also use the following lemma, which is proved in appendix.

% % \begin{lem}\label{lem:mbInCofUnion}\comAP{We might prove the equality}
% % If $B_1 \subseteq C$ and $B_2\subseteq C$. Then $\mb_{C}(B_1 \cup B_2) = (\mb_C(B_1) \cup \mb_C(B_2)) \backslash (B_1 \cup B_2)$.
% % \end{lem}

% % \inlAP{This is not exactly the lemma we need. Probably shorter to write if we use}.

% \begin{lem}\label{lem:markovBlanketAndIndependence}
% Given two sets $C$ and $D$, then $\mb_{C \cup D}(D)$ is the smallest subset $M$ of $C$ such that $C \backslash M$ is d-separated from $D$ given $M$. Furthermore, $M = \{v \in C \colon \mb_{C\cup D}(v)\subseteq C\}$.
% \end{lem}

% The proof of Proposition~\ref{prop:mbInCdefinition} is available in appendix.

%%%%%%%%%%%%%%%%%%%%%%%%%%%%%%%%%%%%%%%%%%%%%%%%%%%%%%%%%%%%%%%%%%%%%%%%%%%%%%%
\subsection{Strength of the relaxations and their interpretation in terms of graph}
\label{sub:strength_and_interpretation_of_relaxations}
%%%%%%%%%%%%%%%%%%%%%%%%%%%%%%%%%%%%%%%%%%%%%%%%%%%%%%%%%%%%%%%%%%%%%%%%%%%%%%%
Consider $(G,\rho)$, a PID with $G = (\Vs,\Va,E)$ and $\rho = (\calX, \mathfrak{p},r)$.
Let $\calG$ be an \rjt on $G$, and $b$ an admissible bound satisfying~\eqref{eq:admissible_bound}.
The valid cuts of Section~\ref{sub:valid_inequalities} enable to introduce the following strengthened version of the MILP~\eqref{pb:MILP}.
\begin{equation}\label{eq:MILPstrengthened}
	\max_{\mu,\, \delta} \sum_{v \in \Vl} \langle r_v , \mu_v \rangle \text{ subject to}
	\enskip \mu \in \Pfree(G,\calX,\mathfrak{p},\calG),
	\enskip \delta \in \detpol,
	\enskip (\mu,\delta) \in \calQ^b.
\end{equation}
% In this section we give interpretations of the linear relaxations in terms of graphs.
The following proposition summarizes the results of \Cref{sub:valid_inequalities}.
% \vlil{A reprendre}
\begin{prop}
% Let $\calP$ be a polytope such that $\calQ^b\subseteq \calP \subseteq \{(\mu,\delta)\colon \mu \in \overline{\calP},\enskip \delta \in \Delta\}$
Any feasible solution $(\mu,\delta)$ of the MILP~\eqref{eq:MILPstrengthened}
% \begin{equation}
% \label{eq:genericcalPint}
% 	\max_{(\mu,\delta) \in \calP}  \enskip \sum_{v \in \Vl} \langle r_v , \mu_v \rangle
% 	\text{ subject to }\mu \in \Pfree \text{ and } \delta \in \detpol,
% \end{equation}
is such that $\mu$ is the vector of moments of the distribution $\bbP_{\delta}$.
Hence, $(\mu,\delta)$ is an optimal solution of \eqref{eq:MILPstrengthened} if and only if $\delta$ is an optimal solution of the MEU problem~\eqref{pb:LIMID} on $(G,\rho)$. %This result remains true $(\mu,\delta) \in \calQ^b$ if we replace by $\mu \in \overline{\calP}$.
\end{prop}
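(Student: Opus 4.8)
The plan is to show that adding the valid cuts \eqref{eq:validInequality} (which define $\Pfree$) to the MILP \eqref{pb:MILP} does not change its feasible set, and then invoke the equivalence already established for \eqref{pb:MILP}. Concretely, I would argue the two inclusions. First, $\Pfree \subseteq \overline{\calP}$ by definition, so every feasible solution of \eqref{eq:MILPstrengthened} is feasible for \eqref{pb:MILP}; by Theorem~\ref{theo:NLPexact} (applied through the MILP reformulation in Section~\ref{sub:milp_formulation}), such a $(\mu,\delta)$ has $\delta \in \detpol$ an admissible deterministic policy and $\mu$ equal to the vector of moments of $\bbP_\delta$. This already gives the first sentence of the proposition. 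Conversely, I would take an arbitrary deterministic policy $\delta$, let $\mu$ be the moments of $\bbP_\delta$, and check that $(\mu,\delta)$ is feasible for \eqref{eq:MILPstrengthened}: feasibility for $\overline{\calP}$ and for $\calQ^b$ follows exactly as in the proof that \eqref{pb:MILP} is equivalent to \eqref{pb:LIMID} (the $\calQ^b$ part uses that $\mu_{\D{v}}=\bbP_\delta(X_{\D{v}})\le b_{\D{v}}$ by \eqref{eq:admissible_bound} together with the McCormick derivation in the appendix), so the only new thing to verify is the valid cut $\mu_{C_v}=p_{\B{C_v}\mid\M{C_v}}\sum_{x_{\B{C_v}}}\mu_{C_v}$ for $v\in\Va$.

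That last verification is the crux, and it is where I would spend the real effort. By the definition of $\B{C}$ (and its characterization in Theorem~\ref{theo:CfreeCharacterization}), for the set $C=C_v$ there exists a conditional distribution $p_{\B{C}\mid\M{C}}$ such that $\bbP_{\delta'}(X_{\B{C}}\mid X_{\M{C}})=p_{\B{C}\mid\M{C}}$ for every policy $\delta'$, in particular for our $\delta$; equivalently, via Proposition~\ref{prop:augmentedGraphIndependence} and Lemma~\ref{lem:mbDefWithoutMbName} with $B=\vartheta_{\Va}$, we have $\B{C}\perp\vartheta_{\Va}\mid\M{C}$ in $\Gaug$, hence $\bbP_\delta(X_{\B{C}}\mid X_{\M{C}})=\bbP_{\Gaug}(X_{\B{C}}\mid X_{\M{C}})=p_{\B{C}\mid\M{C}}$ whenever $\bbP_\delta(X_{\M{C}})>0$. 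Writing $\mu_C=\mu_{\B{C}\,\M{C}}=\bbP_\delta(X_{\B{C}},X_{\M{C}})=\bbP_\delta(X_{\B{C}}\mid X_{\M{C}})\,\bbP_\delta(X_{\M{C}})=p_{\B{C}\mid\M{C}}\,\mu_{\M{C}}$, and noting $\mu_{\M{C}}=\sum_{x_{\B{C}}}\mu_C$, gives exactly \eqref{eq:validInequality}; on configurations $x_{\M{C}}$ with $\bbP_\delta(X_{\M{C}}=x_{\M{C}})=0$ both sides vanish, so the functional equality holds everywhere. This shows $\mu\in\Pfree$, so $(\mu,\delta)$ is feasible for \eqref{eq:MILPstrengthened} and attains objective $\bbE_\delta[\sum_{v\in\Vl}r_v(X_v)]$.

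Having both inclusions, the feasible set of \eqref{eq:MILPstrengthened} is in bijection (via $(\mu,\delta)\mapsto\delta$) with $\detpol$, and the objective value of $(\mu,\delta)$ equals the MEU objective of $\delta$; combined with the classical fact (recalled after \eqref{pb:LIMID}) that \eqref{pb:LIMID} admits a deterministic optimum, this yields that $(\mu,\delta)$ is optimal for \eqref{eq:MILPstrengthened} iff $\delta$ is optimal for \eqref{pb:LIMID}, which is the second sentence of the proposition. The only genuine obstacle is the valid-cut check, and more precisely the bookkeeping around zero-probability conditioning events so that the constraint is read as a functional equality on all of $\calX_{C_v}$; everything else is a repackaging of Theorem~\ref{theo:NLPexact}, Theorem~\ref{theo:CfreeCharacterization}, and Proposition~\ref{prop:augmentedGraphIndependence}. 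I would also remark that the cut being \emph{valid} (i.e.\ not cutting off any integer-feasible $\mu$) is precisely the content of this argument, so no separate justification of validity is needed beyond what is written here.
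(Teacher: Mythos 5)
Your proposal is correct and follows essentially the same route as the paper, which proves this proposition only implicitly by combining the validity of the cuts from \Cref{sub:valid_inequalities} (via Theorem~\ref{theo:CfreeCharacterization} and Proposition~\ref{prop:augmentedGraphIndependence}, so that the moments of any $\bbP_\delta$ satisfy \eqref{eq:validInequality}) with the already established equivalence of the base MILP~\eqref{pb:MILP} with the MEU problem through Theorem~\ref{theo:NLPexact} and the exactness of the McCormick constraints for binary $\delta$. Your additional care about the zero-probability conditioning configurations is a harmless elaboration of the same argument, not a different approach.
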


% We now detail results on the quality of its linear relaxation.
In this section we give interpretations of the linear relaxations of~\eqref{pb:MILP} and~\eqref{eq:MILPstrengthened} in terms of graphs.
We introduce the sets of edges and IDs
\begin{subequations}
	\begin{alignat*}{2}
\overline{E} &= E \cup \big\{(u,v)\colon v \in \Va \text{ and } u \in C_v \backslash \fa{v}\big\}
&\quad \overline{G}&= (\Vs,\Va,\overline{E}),\\
\Efree &= E \cup \big\{(u,v) \colon v\in \Va \text{ and } u \in \M{C_v} \backslash \fa{v} \big\}
 &\quad\text{and}\quad\Gfree &= (\Vs,\Va,\Efree).
	\end{alignat*}
\end{subequations}
\noindent Figure~\ref{fig:boxeRelaxationsComparison} illustrates $\overline{G}$ and $\Gfree$ on the ID of Figure~\ref{fig:example_chess}.
Note that $E \subseteq \Efree \subseteq \overline{E}$, and remark the three following facts on $\overline{G}$ and $\Gfree$.
First, the definition of both IDs depends on $G$ and $\calG$.
Second, $\calG$ is still an \rjt on $\overline{G}$ and $\Gfree$.
And third, any parametrization $(\calX_V,\mathfrak{p},r)$ of $G$ is also a parametrization of $\overline{G}$ and of $\Gfree$.
The second and third results are satisfied by any ID $G' = (\Vs,\Va,E\cup E')$, where $E'$ contains only arcs of the form $(u,v)$ with $v \in \Va$ and $u \in C_v$.
% Given an ID $G' = (\Vs,\Va,E')$ and $\prt[G']{v}= \prt[G]{v}$ for all $v$ in $\Vs$, we denote by $\calS(G')$ the set of marginals $(\mu_C)_{C \in \calV}$ in $\calL_{\calG}$ corresponding to moments of a policy $\delta$ on $G'$.
Hence, if we denote by $\Delta_{G'}$ the set of feasible policies for $(G',\calX_V,\mathfrak{p},r)$, we can extend the definition of $\calS(G)$ in Equation~\eqref{eq:defQcalG} to such $G'$
$$ \calS(G') = \big\{
\mu \in \overline{\calP}  \colon \exists \delta \in \Delta_{G'}, \mu_{C_v} = \mu_{\D{v}} \delta_{v|\prt[G']{v}} \text{ for all $v$ in $\Va$}
\big\}.$$
% For any such $G'$, we denote by $\Delta_{G'}$ the set of feasible policies for $(G',\calX_V,\mathfrak{p},r)$, and by $\calS(G')$ the subset $\calL_\calG$ corresponding to moments of policies in $\Delta_{G'}$. We have

%  and that
% Further, $\calG$ is still an \rjt for the ID $\overline{\Gamma}(G, \calG) = (\Vs,\Va,\overline{E})$ and for the ID $\Gafree(G,\calG)=(\Vs,\Va,\Efree)$.

 %\vl{
%  $$
% Q(G) = \left\{
% \mu \in \bar{\calP}(G,\frak{p}, \calG, \calX) \; | \;
% \forall v \in \Va, \quad \exists \delta_{v|\prt{v}} \in \{0,1\}^{|\fa{v}|},
% \quad \mu_{C_v} = \delta_{v|\prt{v}} \mu_{\D{v}}
% \right\}
% $$

% \begin{equation}\label{eq:defQcalG}
% \calS(G') = \left\{
% \mu \in \calL_\calG \colon \exists \delta \in \Delta_{G'}
% \left|
% \begin{array}{ll}
% \mu_{C_v} = \mu_{\D{v}} p_{v|\prt[G]{v}}, & \text{for all $v$ in $\Vs$} \\
% \mu_{C_v} = \mu_{\D{v}} \delta_{v|\prt[G']{v}}, & \text{for all $v$ in $\Va$}
% \end{array}
% \right.
% % \forall v \in \Vs, \quad \mu_{C_v} = \mu_{\D{v}} p_{v|\prt{v}}, \quad
% % \exists \delta \in \Delta, \quad \forall v \in \Va,
% % \quad \mu_{C_v} = \mu_{\D{v}} \delta_{v|\prt{v}}
% \right\}
% .
% \end{equation}

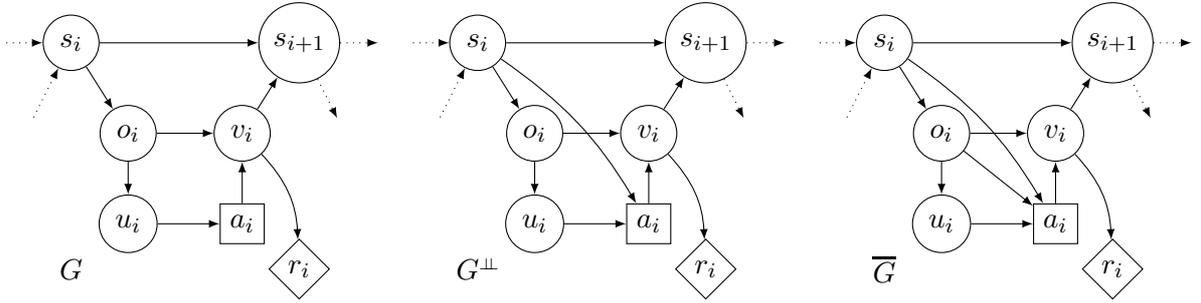
\begin{figure}
    \begin{center}
    \begin{tabular}{ccc}

        \begin{tikzpicture}
        \def\l{1.5}
        \def\h{1.2}

        \node[sta] (s1) at (-0.5*\l,2*\h) {$s_{i}$};
        \node[sta] (o1) at (0*\l,1*\h) {$o_{i}$};
        \node[sta] (u1) at (0*\l,0*\h) {$u_{i}$};
        \node[act] (a1) at (1*\l,0*\h) {$a_{i}$};
        \node[sta] (v1) at (1*\l,1*\h) {$v_{i}$};
        \node[uti] (r1) at (1.5*\l,-0.5*\h) {$r_{i}$};
        \node[sta] (s2) at (1.5*\l,2*\h) {$s_{i+1}$};

        \node at (-.5*\l,-0.5*\h) {$G$};

        \draw[arc] (s1) -- (s2);
        \draw[arc] (s1) -- (o1);
        \draw[arc] (o1) -- (v1);
        \draw[arc] (o1) -- (u1);
        \draw[arc] (u1) -- (a1);
        \draw[arc] (v1) -- (s2);
        \draw[arc] (a1) -- (v1);
        \draw[arc] (v1) to [bend left=20] (r1);

        \draw[arc,<-,dotted] (s1) -- ++ (-0.5,-1);
        \draw[arc,<-,dotted] (s1.west) -- ++ (-0.5,0);
        \draw[arc,dotted] (s2.east) -- ++ (0.5,0);
        \draw[arc,dotted] (s2) -- ++ (0.5,-1);

        % \draw[arc,dashed] (o1) -- (a1);
        % \draw[arc,dashed] (s1) to[bend left=12] (a1);
        \end{tikzpicture}

        &

                \begin{tikzpicture}
        \def\l{1.5}
        \def\h{1.2}

        \node[sta] (s1) at (-0.5*\l,2*\h) {$s_{i}$};
        \node[sta] (o1) at (0*\l,1*\h) {$o_{i}$};
        \node[sta] (u1) at (0*\l,0*\h) {$u_{i}$};
        \node[act] (a1) at (1*\l,0*\h) {$a_{i}$};
        \node[sta] (v1) at (1*\l,1*\h) {$v_{i}$};
        \node[uti] (r1) at (1.5*\l,-0.5*\h) {$r_{i}$};
        \node[sta] (s2) at (1.5*\l,2*\h) {$s_{i+1}$};

        \node at (-.5*\l,-0.5*\h) {$\Gfree$};

        \draw[arc] (s1) -- (s2);
        \draw[arc] (s1) -- (o1);
        \draw[arc] (o1) -- (v1);
        \draw[arc] (o1) -- (u1);
        \draw[arc] (u1) -- (a1);
        \draw[arc] (v1) -- (s2);
        \draw[arc] (a1) -- (v1);
        \draw[arc] (v1) to [bend left=20] (r1);

        \draw[arc,<-,dotted] (s1) -- ++ (-0.5,-1);
        \draw[arc,<-,dotted] (s1.west) -- ++ (-0.5,0);
        \draw[arc,dotted] (s2.east) -- ++ (0.5,0);
        \draw[arc,dotted] (s2) -- ++ (0.5,-1);

        % \draw[arc] (o1) -- (a1);
        \draw[arc] (s1) to[bend left=12] (a1);
        \end{tikzpicture}

        &

                \begin{tikzpicture}
        \def\l{1.5}
        \def\h{1.2}

        \node[sta] (s1) at (-0.5*\l,2*\h) {$s_{i}$};
        \node[sta] (o1) at (0*\l,1*\h) {$o_{i}$};
        \node[sta] (u1) at (0*\l,0*\h) {$u_{i}$};
        \node[act] (a1) at (1*\l,0*\h) {$a_{i}$};
        \node[sta] (v1) at (1*\l,1*\h) {$v_{i}$};
        \node[uti] (r1) at (1.5*\l,-0.5*\h) {$r_{i}$};
        \node[sta] (s2) at (1.5*\l,2*\h) {$s_{i+1}$};

        \node at (-.5*\l,-0.5*\h) {$\overline{G}$};

        \draw[arc] (s1) -- (s2);
        \draw[arc] (s1) -- (o1);
        \draw[arc] (o1) -- (v1);
        \draw[arc] (o1) -- (u1);
        \draw[arc] (u1) -- (a1);
        \draw[arc] (v1) -- (s2);
        \draw[arc] (a1) -- (v1);
        \draw[arc] (v1) to [bend left=20] (r1);

        \draw[arc,<-,dotted] (s1) -- ++ (-0.5,-1);
        \draw[arc,<-,dotted] (s1.west) -- ++ (-0.5,0);
        \draw[arc,dotted] (s2.east) -- ++ (0.5,0);
        \draw[arc,dotted] (s2) -- ++ (0.5,-1);

        \draw[arc] (o1) -- (a1);
        \draw[arc] (s1) to[bend left=12] (a1);
        \end{tikzpicture}
    \end{tabular}
    \end{center}
    \caption{Soluble relaxations corresponding to linear relaxations for the chess example.}
    \label{fig:boxeRelaxationsComparison}
\end{figure}
%}

% Furthermore let $\langle r | \mu \rangle$ denote $\displaystyle\sum_{v \in \Vl} \langle r_v , \mu_v \rangle$.

% \vlil{def what we means by equivalent ?}
\begin{theo}\label{theo:PolytopesAndGraphs}
We have
$$\overline{\calP} = \calS(\overline{G}) \quad \text{and} \quad \max_{\mu \in \overline{\calP}}\displaystyle\sum_{v \in \Vl} \langle r_v , \mu_v \rangle = MEU(\overline{G},\rho),$$
and
$$\Pfree = \calS(\Gfree) \quad \text{and} \quad \max_{\mu \in \Pfree}\displaystyle\sum_{v \in \Vl} \langle r_v , \mu_v \rangle = MEU(\Gfree,\rho).$$
\end{theo}
Hence, if $(\mu,\delta)$ is a solution of the linear relaxation of~\eqref{pb:MILP}, then $\delta$ is a policy on $\overline{G}$, while if $(\mu,\delta)$ is a solution of the linear relaxation of~\eqref{eq:MILPstrengthened}, then $\delta$ is a policy on $\Gfree$.

Remark furthermore that $\calS(G')$ is generally not a polytope.
Indeed, when $G' = G$, this is the reason why $\eqref{pb:NLP}$ is not a linear program.
An important result of the theorem is that $\calS(\overline{G})$ and $\calS(\Gfree)$ are polytopes, and $MEU(\overline{G},\rho)$ and $MEU(\Gfree,\rho)$ can therefore be solved using the linear programs~$\max_{\mu \in \overline{\calP}}\displaystyle\sum_{v \in \Vl} \langle r_v,\mu_v \rangle$ and~$\max_{\mu \in \Pfree}\displaystyle\sum_{v \in \Vl} \langle r_v , \mu_v \rangle$ respectively.

The proof of the theorem uses the following lemma.

\begin{lem}\label{lem:independenceCfreeAndV}
Let $v$ be a vertex in $\Va$.
Then $x_{C_v}\mapsto p_{\B{C_v}|\M{C_v}}(x_{\B{C_v}}|x_{\M{C_v}\backslash v},x_v)$ is a function of $(x_{\B{C_v}},x_{\M{C_v}\backslash v})$ only. Hence, if a distribution $\mu_{C_v}$ satisfies $\mu_{C_v}=\mu_{\M{C_v}} p_{\B{C_v}|\M{C_v}}$, then $\B{C_v} \perp v \mid \M{C_v} \backslash\{v\}$ according to $\mu_{C_v}$.
%$\bbP_\mu (X_v=x_v\mid X_{\D{v}}=x_{\D{v}})$ is a function of $x_{\B{C_v}}$ only.
\end{lem}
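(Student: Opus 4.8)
The plan is to reduce the first assertion to a conditional independence statement under the augmented model $\bbP_{\Gaug}$ and then to establish that statement by marginalizing out the single augmentation vertex $\vartheta_v$; the second assertion will then follow in one line.

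First I would collect three facts. By Theorem~\ref{theo:CfreeCharacterization}, $\M{C_v}=C_v\setminus\B{C_v}$ with $\B{C_v}=\{u\in C_v\colon u\perp\vartheta_{\Va}\mid C_v\setminus\{u\}\}$ in $\Gaug$. Since $\vartheta_v\to v$ is an arc of $\Gaug$, the one-edge trail $\vartheta_v\to v$ is active given any set, so $v\notin\B{C_v}$, i.e.\ $v\in\M{C_v}$; and for every $u\in\prt[G]{v}$ the trail $u\to v\leftarrow\vartheta_v$ is active given $C_v\setminus\{u\}$ because $v\in C_v\setminus\{u\}$, so $u\notin\B{C_v}$, whence $\prt[G]{v}\subseteq\M{C_v}\setminus\{v\}$ (using $\fa{v}\subseteq C_v$ from the definition of an \rjt). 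Next, $\B{C_v}\perp\vartheta_{\Va}\mid\M{C_v}$ in $\Gaug$ (Theorem~\ref{theo:CfreeCharacterization}, or Lemma~\ref{lem:mbDefWithoutMbName} with $B=\vartheta_{\Va}$, $C=C_v$), so Proposition~\ref{prop:augmentedGraphIndependence} yields $p_{\B{C_v}\mid\M{C_v}}=\bbP_{\Gaug}(X_{\B{C_v}}\mid X_{\M{C_v}})$. Finally, by Proposition~\ref{prop:algcorrect} --- recall that the {\rjt}s we use are produced by Algorithm~\ref{alg:buildRJT} --- we have $u\in C_v\Rightarrow u\preceq v$, so $C_v$ contains no proper descendant of $v$; thus $\B{C_v}$ and $\M{C_v}\setminus\{v\}$ are both subsets of $C_v\setminus\{v\}$ and hence consist of non-descendants of $v$ in $G$.

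The heart of the argument is to show $X_{\B{C_v}}\perp X_v\mid X_{\M{C_v}\setminus\{v\}}$ under $\bbP_{\Gaug}$. I would marginalize $\vartheta_v$ out of $\bbP_{\Gaug}$: since $\vartheta_v$ is a root of $\Gaug$ whose unique child is $v$, the marginal of the remaining variables factorizes on $\Gaug\setminus\{\vartheta_v\}$ with the conditional of $v$ replaced by $\bar p_v(x_v\mid x_{\prt[G]{v}})=\bbE\big[\theta_v(x_v\mid x_{\prt[G]{v}})\big]$, the expectation being over $\theta_v$ uniform on $\Delta_v$. As permuting parent configurations is a symmetry of the uniform law on $\Delta_v$, this expectation is the same for every value of $x_{\prt[G]{v}}$; consequently the marginal actually factorizes on the DAG obtained from $G$ by deleting every arc into $v$ (and adjoining the arcs $\vartheta_{v'}\to v'$ for $v'\in\Va\setminus\{v\}$), a DAG in which $v$ is parentless. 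A parentless vertex is independent of all of its non-descendants, so, by the third fact above, $X_v\perp X_{C_v\setminus\{v\}}$ under this marginal, hence under $\bbP_{\Gaug}$ (marginalizing $\vartheta_v$ does not change the joint law of the variables indexed by $V$). In particular $X_{\B{C_v}}\perp X_v\mid X_{\M{C_v}\setminus\{v\}}$ under $\bbP_{\Gaug}$, so, using the second fact, $p_{\B{C_v}\mid\M{C_v}}(x_{\B{C_v}}\mid x_{\M{C_v}})=\bbP_{\Gaug}(X_{\B{C_v}}=x_{\B{C_v}}\mid X_{\M{C_v}\setminus\{v\}}=x_{\M{C_v}\setminus\{v\}})$ wherever the conditioning event has positive probability; this is the first assertion, the value on the complementary null set being immaterial. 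For the second assertion, plugging this factorization into $\mu_{C_v}=\mu_{\M{C_v}}\,p_{\B{C_v}\mid\M{C_v}}$ shows that, wherever $\mu_{\M{C_v}}(x_{\M{C_v}})>0$, the conditional law of $X_{\B{C_v}}$ given $X_{\M{C_v}}=x_{\M{C_v}}$ under $\mu_{C_v}$ is $p_{\B{C_v}\mid\M{C_v}}(\cdot\mid x_{\M{C_v}})$, which depends on $x_{\M{C_v}}$ only through $x_{\M{C_v}\setminus\{v\}}$; that is exactly $\B{C_v}\perp v\mid\M{C_v}\setminus\{v\}$ according to $\mu_{C_v}$.

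The step requiring the most care is the use of the graph structure: the statement genuinely needs $C_v$ to contain no proper descendant of $v$ (it already fails for the trivial \rjt $(\{V\},\emptyset)$), which is precisely the minimality property of Algorithm~\ref{alg:buildRJT}'s output recorded in Proposition~\ref{prop:algcorrect}. Beyond that, one must justify carefully that marginalizing a root of the DAG preserves the factorization of the remaining variables, and that uniformity of $\theta_v$ makes the updated conditional of $v$ independent of its parents --- both routine but worth stating precisely.
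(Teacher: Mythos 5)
Your proof is correct, and while it targets the same intermediate statement as the paper --- that $\B{C_v}$ is independent of $v$ given $\M{C_v}\backslash\{v\}$ in the augmented model --- it reaches it by a genuinely different route. The paper's proof is purely graph-theoretic: any $\B{C_v}$-$v$ trail active given $\M{C_v}\backslash\{v\}$ is extended by the arc $\vartheta_v \rightarrow v$; because $v$ has no descendants in $C_v$, the vertex $v$ is a collider of the extended trail, and since $v\in\M{C_v}$ that collider is activated by conditioning on $\M{C_v}$, yielding an active $\B{C_v}$-$\vartheta_{\Va}$ trail that contradicts the definition of $\B{C_v}$. You instead argue probabilistically: integrating the uniform prior on $\theta_v$ replaces the conditional of $v$ by the uniform distribution, which ignores $\prt{v}$, so under $\bbP_{\Gaug}$ the variable $X_v$ is \emph{marginally} independent of all its non-descendants, hence of $C_v\backslash\{v\}$, and weak union gives the conditional independence. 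Your version avoids any trail case analysis and makes explicit the (perhaps surprising but correct) fact that $X_v \perp X_{C_v\backslash\{v\}}$ unconditionally under the uniform-prior augmented model; the price is that it leans on the specific symmetric choice of prior on $\theta_v$ (harmless here, since $p_{\B{C_v}|\M{C_v}}$ is prior-independent on its support, as you note), whereas the paper's d-separation argument is prior-free. One small remark: you justify ``$C_v$ contains no proper descendant of $v$'' via Proposition~\ref{prop:algcorrect}, which is specific to the output of Algorithm~\ref{alg:buildRJT}; the lemma is stated under the weaker standing assumption $\offspring{C_v}=\{v\}$, from which the same fact follows directly (if $u\in C_v$ were a proper descendant of $v$, Proposition~\ref{prop:rjtProperties} would force $C_u=C_v$ and hence $u\in\offspring{C_v}$), so your argument goes through in the intended generality after this minor substitution.
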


\begin{proof}
% Remark that $\B{C_v} \perp v | \M{C_v} \backslash\{v\}$. Indeed,
Consider the augmented model $\bbP_{\Gaug}$.
Let $P$ be a $\B{C_v}$-$v$ trail.
Let $Q$ be the trail $P$ followed by the arc $(v,\vartheta_v)$.
Given that $v$ has no descendants in $C_v$ (because of the hypothesis $\offspring{C}_v = \{v\}$), the vertex $v$ is a v-structure of $Q$.
As $v\in \M{C_v}$, if $P$ is active given $\M{C_v} \backslash \{v\}$, then $P$ is active given $\M{C_v}$, which contradicts the definition of $\B{C_v}$. Hence, $\B{C_v} \perp v \mid \M{C_v} \backslash \{v\}$ according to $\bbP_{\Gaug}$, and $x_{C_v}\mapsto p_{\B{C_v}|\M{C_v}}(x_{\B{C_v}}|x_{\M{C_v}\backslash v},x_v)$ is a function of $(x_{\B{C_v}},x_{\M{C_v}\backslash v})$ only.
The second part of the lemma is an immediate corollary.
 % and $\bbP_\mu (X_v=x_v\mid X_{\D{v}}=x_{\D{v}})$ is a function of $x_{\B{C_v}}$ only.
% The proof should be that $p_{\B{C_a}|\M{C_a}}=p_{\B{C_a}|\M{C_a}\backslash \{a\}}$(?)
\end{proof}

\begin{proof}[Proof of Theorem~\ref{theo:PolytopesAndGraphs}]
First, remark that,
% as $\overline{P}$ and $\Pfree$ are polytopes,
once we have proved $\overline{\calP} = \calS(\overline{G})$ and $\Pfree = \calS(\Gfree)$,
 % the remaining of the proof is immediate.
the result follows from~Theorem~\ref{theo:NLPexact}.

We now prove $\overline{\calP} = \calS(\overline{G})$.
Let $\mu$ be in $\overline{\calP}$.
Then $\mu$ is a vector of moments in the local polytope of the \rjt $\calG$ on $\overline{G}$. Furthermore, since, first, for $v \in \Va,$ $\fa[\overline{G}]{v} = C_v$ , and second, for $v \in \Vs, \mu_{C_v} = \mu_{\D{v}} p_{v|\prt[G]{v}}$ together with  $\prt[\overline{G}]{v}=\prt[{G}]{v}$ imply that, according to $\mu_{C_v}$,  $X_v$ is independent from its non-descendants in $\overline{G}$ restricted to $C_v$ given $\prt[\overline{G}]{v}$ , Theorem~\ref{theo:rjtFactorization} ensures that $\mu$ is a vector of moments of a distribution that factorizes on $\overline{G}$, which yields $\overline{\calP} \subseteq \calS(\overline{G})$. Inclusion $\calS(\overline{G}) \subseteq \overline{\calP}$ is immediate.

Consider now a vector of moments $\mu$ in $\Pfree$.
Given $v \in \Va$, Lemma~\ref{lem:independenceCfreeAndV} and the definition of $\Gfree$ ensure that, according to $\mu_{C_v}$, variable $X_v$ is independent from its non-descendants in $\Gfree$ in $C_v$, \ie $\B{C_v}\backslash\{v\}$,
% \comGO{No, this should be $C_v \backslash \{v\}=\check{C}_v$,right? Otherwise you are actually talking about non-descendants other than parents... I have clarified a couple of other points in this proof}\comAP{I don't understand what is problematic}
given its parents in $\Gfree$, \ie $\M{C_v} \backslash v$.
If $v \in \Vs$, constraints $\mu_{C_v} = \mu_{\D{v}} p_{v|\prt{v}}$ still implies that $X_v$ is independent from its non-descendants in $C_v$ given its parents according to $\mu_{C_v}$, because by definition of $\Gfree$, for $v \in \Vs$, we have $\prt[\Gfree]{v}=\prt[G]{v}$. Theorem~\ref{theo:rjtFactorization} again enables to conclude that $\Pfree \subseteq \calS(\Gfree)$. Inclusion $\calS(\Gfree) \subseteq \Pfree$ is immediate.
% First, remark that $\calG$ is an \rjt on $\Gfree$.
% Let $\mu$ be a solution of the linear program obtained by replacing \eqref{eq:NLP_factorization} by \eqref{eq:validInequality} in Problem~\eqref{pb:NLP}.
% Constraint \eqref{eq:MILPcommon} ensures that $\mu$ is in the local polytope of $\calG$.
% Furthermore, for each vertex $v \in \Vs$, constraint \eqref{eq:validInequality} ensures that $X_v$ is independent from the non-descendants of $v$ in $G$ that are in $C_v$. The definition of $\Gfree$ and constraint \eqref{eq:validInequality} ensure that $X_v$ is independent from the non-descendants of $v$ in $\Gfree$ that are in $C_v$ for each vertex $v$ in $\Va$.
% Theorem~\ref{theo:rjtFactorization} then enables to conclude.
\end{proof}

\section{Soluble influence diagrams}
\label{sec:soluble_}
%%%%%%%%%%%%%%%%%%%%%%%%%%%%%%%%%%%%%%%%%%%%%%%%%%%%%%%%%%%%%%%%%%%%%%%%%%%%%%%
%%%%%%%%%%%%%%%%%%%%%%%%%%%%%%%%%%%%%%%%%%%%%%%%%%%%%%%%%%%%%%%%%%%%%%%%%%%%%%%

In this section, we make the assumption that IDs are such that any vertex $v\in V$ has a descendant in the set of utility vertices $\Vr$, \ie $\Vs \cup \Va = \casc{\Vl}$.
The following remark explains why we can make this assumption without loss of generality.

\begin{rem}
Consider a parametrized ID $(G, \rho)$ where $G = (\Vs,\Va,E)$ and $\Vs$ is the union of chance vertices $\Vc$ and utility vertices $\Vr$.
Let $(G',\rho')$ be the ID obtained by removing any vertex that is not in $\Vr$ and has no descendant in $\Vr$ and restrict $\rho$ accordingly. %Denoting by $V$ the vertices of $V$
If a random vector $X_V$ factorizes as a directed graphical model on $(V,E)$ and $V' \subseteq V$ is such that $\casc{V'} = V'$, then $X_{V'}$ factorizes as a directed graphical model on the subgraph induced by $V'$ with the same conditional probabilities $p_{v|\prt{v}}$.
Hence, given a policy $\delta$ on $(G,\rho)$ and its restriction $\delta'$ to $(G', \rho')$, we have $\bbE_{\delta}\big(\sum_{v \in \Vr} r_v(X_v)\big) = \bbE_{\delta'}\big(\sum_{v \in \Vr} r_v(X_v)\big)$ where the first expectation is taken in
$(G,\rho)$ and the second in $(G', \rho')$, and the two IDs model the same \meu problem.
% Indeed, if it is not the case, the ID obtained by removing any vertex
\end{rem}

The proofs of this section are quite technical and can be found in~\Cref{sub:proofs}.

%%%%%%%%%%%%%%%%%%%%%%%%%%%%%%%%%%%%%%%%%%%%%%%%%%%%%%%%%%%%%%%%%%%%%%%%%%%%%%%
\subsection{Linear program for soluble influence diagrams}
\label{sub:linear_program_for_soluble_limids}
%%%%%%%%%%%%%%%%%%%%%%%%%%%%%%%%%%%%%%%%%%%%%%%%%%%%%%%%%%%%%%%%%%%%%%%%%%%%%%%
% We now introduce some notions and results whose proofs can be found in the book of \citet[Chapter 23.5]{koller2009probabilistic}.
Consider an ID $G = (\Vs,\Va,E)$ with $\Vs$ being the union of chance vertices $\Vc$ and utility vertices $\Vl$.
Given a policy $(\delta_u)_{u \in \Va}$ and a decision vertex $v$, we denote $\delta_{-v}$ the partial policy $(\delta_u)_{u \in \Va \backslash v}$.
%\comGO{Could we call this a partial policy?}.
A policy $(\delta_v)_{v \in \Va}$ is called a \emph{local optimum} if
$$ \delta_v \in \argmax_{\delta'_v \in \Delta_v} \bbE_{\delta'_v,\delta_{-v}}\Bgp{\sum_{u\in \Vl} r_u(X_u)} \quad \text{for each vertex $v$ in $\Va$.} $$
It is a \emph{global optimum} if it is an optimal solution of \eqref{pb:LIMID}.
% given $\theta_{-v}$ if $$\theta_v \in \argmax_{\theta'_v \in \Delta_v}\bbE\bigg(\sum_{v \in \Vl}X_v|\theta'_v,\theta_{-v}\bigg).$$ It is \emph{globally optimal} if it is locally optimal for all $\theta_{-v}$.
Two concepts, \emph{s-reachability} and the \emph{relevance graph} have been introduced in the literature to characterize when a local minimum is also global~\citep[see \eg][Chapter 23.5]{koller2009probabilistic}.
A decision vertex $u$ is \emph{s-reachable} from a decision vertex $v$ if $\vartheta_u$ is not d-separated from $\dsc{v}$ given $\fa{v}$:
\begin{equation}\label{eq:def_soluble}
	\vartheta_u \,\nperp_{G^{\dagger}}\, \dsc{v} \mid \fa{v}.
\end{equation}
The usual definition is $\vartheta_u \,\nperp_{G^{\dagger}}\, \dsc{v} \cap \Vl \mid \fa{v}$, but these definitions coincide in our setting, since we have assumed that $\dsc{v} \cap \Vl \neq \emptyset$ for any $v \in \Va$.
Intuitively, the definition of this concept is motivated by the fact that the choice of policy $\delta_v$ given $(\delta_w)_{w \neq v}$ depends on $\delta_u$ only if $u$ is s-reachable from $v$. Note that, for example, if $u \in \dsc{v},$ then $u$ is s-reachable from $v$.
The \emph{relevance graph} of $G$ is the digraph $H$ with vertex set $\Va$, and whose arcs are the pairs $(v,u)$ of decision vertices such that $u$ is s-reachable from $v$.
Finally, the \emph{single policy update} algorithm (SPU) \citep{lauritzen2001representing} is the standard coordinate ascent heuristic for IDs. It iteratively improves a policy $\delta$ as follows: at each step, a vertex $v$ is picked, and $\delta_v$ is replaced by an element in $\displaystyle\argmax_{\delta'_v \in \Delta_v} \bbE_{\delta'_v,\delta_{-v}}\Big(\sum_{u\in \Vl} r_u(X_u)\Big)$.

The following proposition characterizes a subset of IDs, called \emph{soluble IDs}, which are easily solved, and provides several equivalent criteria to identify them.
% An \emph{exact solution ordering} is an order on $\Va$ such that SPU applied along this order is guaranteed to converge to a local optimum.

\begin{prop}\cite[Theorem 23.5]{koller2009probabilistic}
Given an influence diagram $G$, the following statements are equivalent and define a \emph{soluble} influence diagram.
\begin{enumerate}
	\item For any parametrization $\rho$ of $G$, any local optimum is a global optimum.
	\item For any parametrization $\rho$ of $G$, SPU converges to a global optimum in a finite number of steps\footnote{In fact, if the graph is soluble, and if the decision nodes are ordered in reverse topological order for the relevance graph, then SPU converges after exactly one pass over the nodes.
}.
	\item The relevance graph is acyclic.
\end{enumerate}
% Furthermore, an ordering is an exact solution ordering if and only if it is a reverse topological ordering on the relevance graph.
\end{prop}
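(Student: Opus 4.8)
The plan is to prove the chain of implications $(3)\Rightarrow(2)\Rightarrow(1)\Rightarrow(3)$. The step $(2)\Rightarrow(1)$ is almost immediate: if $\delta$ is a local optimum then, for every $v\in\Va$, the current $\delta_v$ already maximizes $\delta'_v\mapsto\bbE_{\delta'_v,\delta_{-v}}\big[\sum_{u\in\Vl}r_u(X_u)\big]$, so an SPU update at $v$ is allowed to return $\delta_v$ unchanged; hence the constant sequence $\delta,\delta,\dots$ is a legitimate run of SPU, whose limit is a global optimum by~(2), so $\delta$ itself is globally optimal.

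The core of $(3)\Rightarrow(2)$ is a \emph{relevance lemma}. For a fixed partial policy $\delta_{-v}$, one rewrites the objective as an affine function of $\delta_v$, namely $c_v+\sum_{x_{\prt{v}}}\bbP_{\delta}(X_{\prt{v}}=x_{\prt{v}})\sum_{x_v}\delta_v(x_v\mid x_{\prt{v}})\,Q_v(x_v,x_{\prt{v}})$, where $c_v$ gathers the contribution of the utility vertices outside $\dsc{v}$ and $Q_v(x_v,x_{\prt{v}})=\sum_{u\in\dsc{v}\cap\Vl}\sum_{x_u}r_u(x_u)\,\bbP_{\delta}(X_u=x_u\mid X_{\fa{v}}=(x_v,x_{\prt{v}}))$ is a value-to-go. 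Conditioning on $X_{\fa{v}}$ blocks $\vartheta_v$, so neither $c_v$ nor $Q_v$ depends on $\delta_v$; and if a decision $w$ is \emph{not} s-reachable from $v$, so that $\vartheta_w\perp_{\Gaug}\dsc{v}\cap\Vl\mid\fa{v}$ in the augmented graph, then Proposition~\ref{prop:augmentedGraphIndependence} shows that $\bbP_{\delta}(X_{\dsc{v}\cap\Vl}\mid X_{\fa{v}})$, hence $Q_v$, does not depend on $\theta_w=\delta_w$. Consequently one can always choose a \emph{smart} maximizer of $\delta_v\mapsto\bbE_{\delta_v,\delta_{-v}}[\cdots]$, namely $\delta_v(\cdot\mid x_{\prt{v}})$ supported on $\argmax_{x_v}Q_v(x_v,x_{\prt{v}})$ for \emph{every} context $x_{\prt{v}}$ (even those of zero probability), and such a choice depends on $\delta_{-v}$ only through the policies at decisions s-reachable from $v$.

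Assume now~(3). Fix a topological order of the relevance graph $H$ and let $w_1,\dots,w_k$ be its reverse, so that every decision s-reachable from $w_i$ lies in $\{w_1,\dots,w_{i-1}\}$. Run SPU once through $w_1,\dots,w_k$, choosing at each step the smart maximizer of the lemma. I would then prove by induction on $i$ that there is a global optimum agreeing with the current policy on $\{w_1,\dots,w_i\}$: in the inductive step, $Q_{w_i}$ depends only on the now-frozen coordinates $w_1,\dots,w_{i-1}$, hence is the same function under the current policy and under the global optimum supplied by the hypothesis; since the chosen $\delta_{w_i}$ maximizes $Q_{w_i}(\cdot,x_{\prt{w_i}})$ in every context, replacing the $w_i$-coordinate of that global optimum by $\delta_{w_i}$ cannot decrease the objective --- context by context, contexts of zero probability contributing nothing --- and therefore produces another global optimum, now agreeing on $\{w_1,\dots,w_i\}$. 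At $i=k$ the policy is itself a global optimum, so SPU terminates at a global optimum after $k$ updates, which is~(2).

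The remaining implication $(1)\Rightarrow(3)$ I would prove by contraposition. If $H$ contains a directed cycle $v_0\to v_1\to\cdots\to v_{m-1}\to v_0$, then each $v_{(i+1)\bmod m}$ is s-reachable from $v_i$, which provides an active trail from $\vartheta_{v_{i+1}}$ to $\dsc{v_i}\cap\Vl$ given $\fa{v_i}$ in $\Gaug$; along these trails one builds a parametrization realizing a cyclic coordination game, in which the reward collected in $\dsc{v_i}\cap\Vl$ is increased exactly when $v_{i+1}$'s decision ``matches'' $v_i$'s, calibrated so that the expected utility is maximal only at mutually consistent configurations. Exhibiting two self-consistent configurations --- one strictly suboptimal --- that are both local optima then contradicts~(1). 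I expect the two genuinely hard points to be exactly this last construction, where the delicate step is to turn the abstract d-connectivity granted by s-reachability into concrete conditional probability tables making a non-global configuration a true local optimum, and the careful bookkeeping of the relevance lemma --- in particular the use of the smart maximizer --- which is what lets a chain of myopic updates, each reading only the already-fixed s-reachable decisions, compose into a global optimum.
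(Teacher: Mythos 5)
First, note that the paper does not actually prove this proposition: it is imported verbatim from \citet[Theorem 23.5]{koller2009probabilistic}, so there is no in-paper argument to compare yours against. Judged on its own terms, your chain $(2)\Rightarrow(1)$ and $(3)\Rightarrow(2)$ is essentially the standard textbook proof and is sound: the decomposition of the objective into $c_v+\sum_{x_{\prt{v}}}\bbP_\delta(X_{\prt{v}}=x_{\prt{v}})\sum_{x_v}\delta_v(x_v\mid x_{\prt{v}})Q_v(x_v,x_{\prt{v}})$, the observation via Proposition~\ref{prop:augmentedGraphIndependence} that $Q_v$ depends on $\delta_{-v}$ only through the s-reachable coordinates, and the use of a context-wise (``smart'') maximizer to make the one-pass induction go through are exactly the right ingredients, and you correctly identify the zero-probability contexts as the place where care is needed.

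The genuine gap is $(1)\Rightarrow(3)$, which you only sketch. ``Along these trails one builds a parametrization realizing a cyclic coordination game'' is precisely the hard step, and as written it would not compile into a proof: you must (a) reduce an arbitrary cycle in the relevance graph to something you can instrument --- the paper does this implicitly by extracting a \emph{mutually} s-reachable pair $a,b$, which requires the dichotomy of Lemma~\ref{lem:topoOrderSoluble} (an edge $(v,u)$ of $H$ with $u\notin\dsc{v}$ forces the reverse edge, while a cycle made only of descendance edges would contradict acyclicity of $G$); and (b) actually turn the active trails granted by s-reachability into conditional probability tables, handling the case distinctions on whether $a\in\asc{b}$ and on degenerate coincidences of vertices along the trails. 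The paper in fact contains such a construction, just attached to a different statement: the proof of Theorem~\ref{theo:validEqualSoluble} in \Cref{sub:proofs} builds, on the subgraph of Figure~\ref{fig:solubleEqualConvex}, a three-faced-die game with two policies $\delta^1,\delta^2$ that are both local optima while only $\delta^2$ is global --- which is exactly the witness your contrapositive needs. Until that construction (or an equivalent one) is carried out in full, your proof of $(1)\Rightarrow(3)$ is a plan rather than an argument.
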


Given a parametrized influence diagram $G$ and an \rjt $\calG$, we introduced in Equation~\eqref{eq:defQcalG} the notation $\calS(G)$ for the subset of the local polytope $\calL_G$ corresponding to moments of policies.

The following theorem introduces a new characterization of soluble IDs in terms of convexity.

\begin{theo}\label{theo:validEqualSoluble}
If $G$ is not soluble then there exists a parametrization $\rho$ such that, for any junction tree $\calG$, the set of achievable moments $\calS(\calG)$ is not convex.

If $G$ is soluble, Algorithm~\ref{alg:buildSolubleRjt} returns an \rjt such that $\Pfree = \calS(\calG)$ for any parametrization~$\rho$.

% There exists an \rjt $\calG$ such that $\Pfree$ is the convex hull of $\calS(G)$ for any parametrization $\rho$ if and only if $G$ is soluble. In that case, $\calS(G) = \Pfree$. Such an \rjt can be computed using Algorithm~\ref{alg:buildSolubleRjt}. \comAP{A stronger version of the converse would be ``if $G$ is not soluble, then there exists a parametrization $\rho$ such that, for any rjt $\calG$, $\Pfree$ is not a perfect formulation of $\calS(G)$''. I think this stronger version is true. However, the proof is much more complicated (and I don't have it at the moment). Furthermore, I would do this kind of proof on the more general case of weakly soluble influence diagrams. The same remark is true for the proposition soluble = convex. }
\end{theo}

% Consequently, if $G$ is soluble,
% gives the optimal solution of \meu$(G)$.
%
%
% The following theorem shows that our MILP formulation~\eqref{eq:MILPstrengthened} is perfect \comAP{this is the right term in MILP theory} on soluble ID, in the sense that an optimal solution of the linear relaxation provides an optimal policy.
% In the next theorem, we prove that the linear relaxation of \eqref{pb:MILP} gives an optimal integer solution for soluble \limids.\vl{Add a corollary or something that explains this sentence}.
%
% \vlil{Mettre lemme : (there exists an \rjt $\calG$ such that $\Gfree$ constructed from $\calG$ satisfy
% $\Gfree = G$) iff G is soluble }
%
% \vlil{reprendre la formulation du theorem}
%
%
% \begin{theo}\label{theo:validEqualSoluble}%\comAP{update with the new notion of soluble \limid}
% There exists an \rjt $\calG$ such that the linear relaxation of \eqref{eq:MILPstrengthened} provides an optimal solution of \eqref{pb:LIMID} for any parametrized ID on $G$ if $G$ is soluble. Such an \rjt can be computed using Algorithm~\ref{alg:buildSolubleRjt}.
% % A \limid $G$ admits an \rjt such that $\M{C_v} = \fa{v}$ for each decision vertex $v$ in $\Va$ if and only if it is soluble.
% \end{theo}
% Algorithm~\ref{alg:buildSolubleRjt} should therefore be used as an alternative to Algorithm~\ref{alg:build_smallest_RJT} on soluble IDs.

The property of being soluble characterizes ``easy'' IDs that can be solved by SPU.
Theorems~\ref{theo:NLPexact} and~\ref{theo:validEqualSoluble} imply that, if $G$ is soluble, our MILP formulation~\ref{eq:MILPstrengthened} reduces to the linear program
$$\max_{\mu \in \Pfree}\displaystyle\sum_{v \in \Vl} \langle r_v , \mu_v \rangle $$
and is therefore ``easy'' to solve.
Of course, this property of being ``easy'' refers only to the decision part of the ID.
If a soluble ID is such that, given a policy, the inference problem is not tractable, both SPU and our MILP formulation will not be tractable in practice.
Theorem~\ref{theo:validEqualSoluble} is a corollary of Theorem~\ref{theo:PolytopesAndGraphs} and the following lemma, and both results are proved in Section~\ref{sub:proofs}. %\comAP{Do you think we should say more on the only if?... More precisely build an example of parametrization such that there is no rjt leading to a perfect formulation}

\begin{lem}\label{lem:SolubleEqualRJT}
There exists an \rjt $\calG$ such that $\Gfree = G$ if and only if $G$ is soluble.
Such an \rjt can be computed using Algorithm~\ref{alg:buildSolubleRjt}.
\end{lem}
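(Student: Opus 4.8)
The plan is to prove both implications by relating the arcs of $\Gfree$ added to $G$ to the s-reachability relation that defines solubility. Recall that $\Gfree$ adds, for each $v\in\Va$, arcs $(u,v)$ with $u\in\M{C_v}\setminus\fa{v}$, where $\M{C_v}$ is the Markov blanket of $\vartheta_{\Va}$ in $C_v$. So $\Gfree=G$ if and only if $\M{C_v}\setminus\fa{v}=\emptyset$, i.e. $\M{C_v}=\fa{v}$, for every decision vertex $v$ (recall $\offspring{C_v}=\{v\}$, so $\fa{v}\subseteq C_v$). The key observation I would establish first is a "local" characterization: for an \rjt with $\offspring{C_v}=\{v\}$, one has $\M{C_v}=\fa{v}$ precisely when $C_v=\fa{v}$ together with the fact that no extra vertex in $C_v$ needs to be retained --- more usefully, $\M{C_v}=\fa{v}$ forces $\B{C_v}=C_v\setminus\fa{v}$ to be d-separated from $\vartheta_{\Va}$ given $\fa{v}$ in $\Gaug$, and in particular $C_v\setminus\fa{v}$ contains no vertex from which a decision vertex is s-reachable "through $C_v$".

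For the direction ``$G$ soluble $\Rightarrow$ there exists an \rjt with $\Gfree=G$'', I would exhibit Algorithm~\ref{alg:buildSolubleRjt}: since the relevance graph $H$ is acyclic, order the decision vertices $v_1,\dots,v_k$ in reverse topological order for $H$, and feed Algorithm~\ref{alg:buildRJT} a topological order $\preceq$ of $G$ refined so that, among decision vertices, it is compatible with this $H$-order. One then checks that the minimal \rjt produced (Proposition~\ref{prop:algcorrect}) satisfies $C_v=\fa{v}\cup(\text{stuff propagated from below})$, and that by minimality \eqref{eq:minimality_rjt} any $u\in C_v\setminus\fa{v}$ must be a parent of some $w$ with $C_v\rcarrow C_w$; translating this through Proposition~\ref{prop:augmentedGraphIndependence} and the definition of s-reachability, such a $u$ witnesses that $v$ is relevant from a decision descendant, contradicting the chosen ordering unless $u\in\fa{v}$ already. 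Hence $\M{C_v}=\fa{v}$ for all $v\in\Va$, so $\Gfree=G$.

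For the converse ``there exists an \rjt with $\Gfree=G$ $\Rightarrow$ $G$ soluble'', I would argue contrapositively: if $G$ is not soluble, the relevance graph $H$ has a cycle $v_1\to v_2\to\cdots\to v_m\to v_1$. For any \rjt $\calG$, look at the decision vertex among $v_1,\dots,v_m$ whose root clique is topologically last (in the \rjt's rooted order), say $v_j$; then $v_{j-1}\to v_j$ in $H$ means $\vartheta_{v_j}\,\nperp\,\dsc{v_{j-1}}\mid\fa{v_{j-1}}$, and one shows this active trail, combined with the running intersection property and the fact that $C_{v_j}\rcarrow C_{v_{j-1}}$ cannot hold (it's the other way or incomparable by the extremal choice along the cycle), forces some vertex of $\dsc{v_{j-1}}$ or of $\{v_j\}$ itself to lie in $C_{v_{j-1}}\setminus\fa{v_{j-1}}$ and to remain d-connected to $\vartheta_{\Va}$, so $\M{C_{v_{j-1}}}\supsetneq\fa{v_{j-1}}$ and $\Gfree\neq G$. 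This uses Theorem~\ref{theo:CfreeCharacterization} (to read $\M{C}$ off d-separation in $\Gaug$) and Proposition~\ref{prop:augmentedGraphIndependence}.

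The main obstacle I expect is the converse direction: carefully turning a cycle in the relevance graph into a concrete vertex that is forced into $C_v\setminus\fa{v}$ for \emph{every} \rjt, not just the one built by the algorithm. The delicate point is the interplay between the \rjt's rooted orientation, the running intersection property, and d-separation in the augmented graph $\Gaug$ --- one must pick the right decision vertex along the cycle (the extremal one for the \rjt order) so that s-reachability, which a priori is about d-connection in $\Gaug$, actually propagates along cliques of $\calG$ and cannot be ``absorbed'' by conditioning on $\fa{v}$. I would isolate this as a sublemma about how s-reachability edges interact with any \rjt, and prove it using the characterization of $\B{C}$ via v-structures at $v$ (as in the proof of Lemma~\ref{lem:independenceCfreeAndV}).
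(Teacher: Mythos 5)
Your reduction of $\Gfree=G$ to $\M{C_v}\subseteq\fa{v}$ for every $v\in\Va$, and your overall architecture (direct direction via Algorithm~\ref{alg:buildSolubleRjt} applied to a relevance-compatible order, converse for an arbitrary \rjt via an active-trail argument) coincide with the paper's. However, the pivotal step of your direct direction would fail: you argue that minimality of the constructed \rjt plus s-reachability makes any $u\in C_v\setminus\fa{v}$ ``contradict the chosen ordering'', i.e.\ in effect that $C_v=\fa{v}$ for all decision vertices. That is false for soluble IDs: in a perfect-recall POMDP (which is soluble), the hidden state $s_t$ is a co-parent with $a_t$ of the utility vertex, so the \rjt returned by Algorithm~\ref{alg:buildSolubleRjt} has $s_t\in C_{a_t}\setminus\fa{a_t}$; no contradiction is available, and what must be shown is that every such $u$ lies in $\B{C_v}$, i.e.\ that $u$ is d-separated from $\vartheta_{\Va}$ given $\fa{v}$. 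The paper's proof does exactly this, and it needs two ingredients your plan does not supply: (i) a common-descendant argument --- using the exact clique characterization of Proposition~\ref{prop:rjtBuildAlgorithm} and, crucially, Step~\ref{step:app} of Algorithm~\ref{alg:buildSolubleRjt}, which forces chance vertices to precede a decision $v$ unless they descend from decisions $\succeq v$ (your plan only constrains the relative order of the decisions) --- producing a common descendant $w$ of $u$ and $v$; and (ii) solubility, through Corollary~\ref{coro:sharedTopo} and \eqref{eq:extremalVertex}, giving $\dsc{v}\perp\vartheta_b\mid\fa{v}$, so that an active $\vartheta_b$--$u$ trail given $\fa{v}$ prolonged by a $u$--$w$ path yields the contradiction. (A minor slip: the decisions must appear in a topological, not reverse topological, order of the relevance graph; for a topological order of $G$ this is automatic when $G$ is soluble.)

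For the converse you correctly identify the hard part but do not carry it out. The paper's argument takes two mutually s-reachable decisions $u,v$ with $C_u$ not above $C_v$, considers the active trail $Q$ from $\vartheta_v$ to some $w\in\dsc{u}$ given $\fa{u}$, extracts from $Q$ a directed path $P$ ending at $w$ that avoids $\fa{u}$, shows via Proposition~\ref{prop:rjtProperties} and the running intersection property that the cliques of the vertices of $P$ lie on the branch of $\calG$ through $C_u$, and then exhibits a vertex $y$ of $P$ with $y\in C_u\setminus\fa{u}$ that remains d-connected to $\vartheta_v$ given $\fa{u}$, whence $\M{C_u}\not\subseteq\fa{u}$ and $\Gfree\neq G$. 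Your extremal-vertex-on-a-cycle variant is plausible (and avoids assuming a $2$-cycle in the relevance graph), but the clique-tracking sublemma you defer is precisely the substance of the proof, so as written the converse remains a plan rather than a proof.
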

Note that based on a topological order of the relevant graph, Algorithm~\ref{alg:buildSolubleRjt} proceeds by computing a \emph{maximal perfect recall graph} that contains graph $G$ and that assigns the same parent sets to elements of $\Vs,$ then uses a topological order of this graph to order the nodes of $G$ for the computation of a rooted junction tree.

\begin{algorithm}
\caption{Build a ``good'' \rjt for a soluble graph $G$ }
\label{alg:buildSolubleRjt}
\begin{algorithmic}[1]
\STATE \textbf{Input:} An ID $G = (\Vs,\Va,E)$.
\STATE \textbf{Initialize:} $E' = \emptyset$. %$E' = \emptyset$, and
\STATE Compute the relevance graph $H$ of $G$
\STATE Compute an arbitrary topological order $\preceq_H$ on $\Va$ for the relevance graph $H$ \label{step:topoH}
\STATE $E' \leftarrow E \cup \{(u,v) \in \Va \times \Va\colon u \preceq_H v \}$ %\inlAP{Update to have a tree instead of a chain} -> not relevant
\STATE $G' \leftarrow (V,E')$
\STATE $E'' \leftarrow E \cup \{(u,v) \in \Vs \times \Va \colon u \notin \dsc[G']{v} \}$ \label{step:app} %\inlAP{inversed compared to heuristics? Is this normal? Yes it is, it is the other algorithms that are in the wrong direction}
\STATE $G'' = (V,E'')$
\STATE Compute an arbitrary topological order $\preceq$ on $G''$ \label{step:topo}
\STATE \textbf{Return} the result of Algorithm~\ref{alg:buildRJT} for $(G, \preceq)$
% \ENDFOR
\end{algorithmic}
\end{algorithm}

%\inlGO{Is there a relationship between $G'$ and $\B{G}$ ?} \comAP{$\Efree \subseteq E'$}

% Our linear program for soluble influence diagrams is based on the fact that, on an appropriate rooted junction tree, the moments of the distribution achievable using the different possible policies belong to a polytope.
% The following theorem is a new characterization of soluble influence diagrams as the influence diagrams for which defining such a polytope is possible.
%  % that can be described using a linear program.
% \begin{theo}\label{theo:solubleEqualConvex}
% Let $G$ be an influence diagram. If $G$ is soluble, then there exists a junction tree $\calG$ such that, for any parametrization, $\calS(\calG)$ is a convex polytope.
% Conversely, if $G$ is not soluble, then there exists parametrization $\rho$ such that, for any junction tree $\calG$, the set of achievable moments $\calS(\calG)$ is not convex.
% % There exists a junction tree such that the set of measures $\{\mu_\delta\colon \delta \in \Delta\}$ that can be obtained for the different policies is convex if and only if the \limid is soluble.
% \end{theo}
% \inlGO{This theorem is strange, because to a large extend it reformulates exactly the same thing as Theorem 17...}\comAP{The direct statement is indeed an immediate corollary. But the reverse statement is stronger: $\calS$ could be convex but not a polytope. Or it could be a polytope, but not of the $\Pfree$ form.}

%%%%%%%%%%%%%%%%%%%%%%%%%%%%%%%%%%%%%%%%%%%%%%%%%%%%%%%%%%%%%%%%%%%%%%%%%%%%%%%
\subsection{Comparison of soluble and linear relaxations}
\label{sub:comparison_of_soluble_and_linear_relaxations}
%%%%%%%%%%%%%%%%%%%%%%%%%%%%%%%%%%%%%%%%%%%%%%%%%%%%%%%%%%%%%%%%%%%%%%%%%%%%%%%

MILP solvers are based on (much improved) branch-and-bound algorithms that use the linear relaxation to obtain bounds. Their ability to solve formulation~\eqref{eq:MILPstrengthened} therefore depends on the quality of the bound provided by the linear relaxation.
As SPU solves efficiently soluble IDs, we could imagine alternative branch-and-bounds schemes that use bounds computed using SPU on ``soluble graph relaxation'' of influence diagrams.
We now formalize this notion and compare the two approaches.
% In the rest of the section,
% we formalize this notion of soluble graph relaxation and show that the linear relaxation bounds of our MILP are better than the bounds that could be obtained by such techniques.

A \emph{soluble graph relaxation} %\comGO{Could we call this a
% \emph{soluble graph relaxation}? to make sure that there are no confusion possible with \emph{linear relaxations}...}\comAP{Ok}
 of an ID $G = (\Vs,\Va,E)$ is a soluble ID $G' = (\Vs,\Va,E')$ where $E'$ is the union of $E$ and a set of arcs with head in $\Va$.
 Remark that Theorem~\ref{theo:PolytopesAndGraphs} can be reinterpreted as the link between soluble graph relaxation and linear relaxations.
And since $\calS(\overline{G}) = \overline{\calP}$ and $\calS(\Gfree) = \Pfree$, by Theorem~\ref{theo:validEqualSoluble}, $\Gfree$ and $\overline{G}$ are soluble, and therefore soluble graph relaxations of $G$.
% The value of \eqref{pb:LIMID} on $G'$ is easier to compute than the value of $G$, as it can for instance be computed using SPU.
% A natural choice is the \emph{full information relaxation} \comGO{Ref?}.
% This relaxation is obtained by choosing a topological ordering $a_1,\ldots,a_k$ on $\Va$, and adding  arcs $v$-$a_j$ for each $i$, $v \in \fa{a_i}$ and $j> i$, and is the tightest\comGO{This seems very very counterintuitive because there are a very large number of arcs added potentially, and it seems surprising that by coupling all decisions you get something tight. Substantial references and comments have to be added here "pour faire passer la pilule"...}
% \comAP{Indeed}
%  soluble graph relaxation that can be obtained for an ID \comAP{cite. Victor, where did you find this result?}.

Since any feasible policy for the ID $G$ is a feasible policy for a soluble graph relaxation $G'$, for any parametrization $\rho$, the value of $\mathrm{MEU}(G',\rho)$,
which can be computed by SPU, provides a tractable bound on $\mathrm{MEU}(G,\rho)$.
Soluble relaxations can therefore be used in branch-and-bound schemes for IDs, as proposed in \citet{khaled2013solving}.
To compare the interest of such a scheme to our MILP approach we need to compare the quality of the soluble graph relaxation and linear relaxation bounds.
Let $G'$ be a soluble graph relaxation  of $G$, applying Algorithm~\ref{alg:buildSolubleRjt} on $G'$ provides an \rjt such that $\Efree \subseteq E'$
% \comGO{Why is this true? It is not obvious to me. Should this be a proposition?}.
Indeed, by Lemma~\ref{lem:SolubleEqualRJT}, $v$ is d-separated from $C_v \backslash \fa[G']{v}$ given $\prt[G']{v}$ in $G'$, and therefore also in $G$, which implies $\Efree \subseteq E'$. %\comAP{Is this new explanation sufficient. Do you want to turn it into a proposition?}
Thus, by Theorem~\ref{theo:PolytopesAndGraphs},
% $$ \max_{\mu \in \Pfree}\displaystyle\sum_{v \in \Vl} \langle r_v , \mu_v \rangle = MEU(\Gfree,\rho) $$
% provides a bound that is at least as good as the one provided by a soluble graph relaxation $G'$, and
the bound provided by the linear relaxation of the MILP~\eqref{eq:MILPstrengthened} is at least as good as the soluble graph relaxation bound, and sometimes strictly better thanks to constraints $(\mu,\delta) \in \calQ^b$.

\section{Numerical experiments}
\label{sec:numerical}

In this section, we provide numerical experiments showcasing the results of the paper.
In particular, on two examples of varying size, we study the impact of the valid inequalities.
On such examples, we solve the MILP formulation~\eqref{pb:MILP} with improved McCormick bounds relying on~\Cref{sub:algorithm_to_choose_good_quality_bounds},
and valid inequalities from~\Cref{sec:validCuts} obtained from the \rjt of \Cref{alg:buildRJT}.
More precisely we solve
 $\max \left\{ \sum_{v \in \Vl} \langle r_v , \mu_v \rangle \mid (\mu,\delta) \in \calQ, \delta \in \detpol \right\}$
\noindent where $\calQ$ is one of the four following polytopes :
${\overline{\calQ}}^{1} = \left( \overline{\calP} \times \Delta \right) \cap \calQ^{1}$ (no cuts),
$\overline{\calQ}^b = \left( \overline{\calP} \times \Delta \right) \cap \calQ^{b}$ (McCormick only),
$\calQ^{\smallindep,1} = \left( \Pfree \times \Delta \right) \cap \calQ^{1}$ (independence cuts only),
$\calQ^{\smallindep,b} = \left( \Pfree \times \Delta \right) \cap \calQ^{b}$ (McCormick and independence cuts).

%  with the simplest McCormick bound (\ie using polytope ${\overline{\calQ}}^{1} = \left( \overline{\calP} \times \Delta^{d} \right) \cap \calQ^{1}$), and then successively strengthen the McCormick bounds relying on \Cref{sub:algorithm_to_choose_good_quality_bounds}
% (\ie solving for $(\mu,\delta)$ in the polytope $\calP^{b} = \left( \overline{\calP} \times \Delta^{d} \right) \cap \calQ^{b}$, and $\delta$ being integer)
% and the valid cuts of \Cref{sec:validCuts}
% (\ie using polytopes $\calQ^{\smallindep,1} = \left( \Pfree \times \Delta^{d} \right) \cap \calQ^{1}$ and $\calQ^{\smallindep,b} = \left( \Pfree \times \Delta^{d} \right) \cap \calQ^{b}$) obtained from the \rjt of \Cref{alg:buildRJT}.

The difficulty of an instance can be roughly measured by the number of feasible deterministic policies \citep{maua2012solving}, \ie $\big \vert \detpol \big \vert$.
We have $ \big \vert \detpol \big \vert = \prod_{v \in \Va} \big \vert \calX_v \big \vert^{\prod_{u \in \prt{v}}\big \vert \calX_u \big \vert}$. Therefore, the difficulty depends exponentially on $\big \vert \calX_v \big \vert$ for $v \in \fa{\Va}$. In our examples, we assume that $\omega_a = \big \vert \calX_v \big \vert$ for all $v \in \fa{\Va}$ and $\omega_s = \big \vert \calX_v \big \vert$  for all $v \in V \backslash \fa{\Va}$.
Each instance is generated by first choosing $\omega_a$ and $\omega_s$.
We then draw uniformly on $[0,1]$ the conditional probabilities $p_{v|\prt{v}}$ for all $v \in V \backslash \Va$ and on $[0,10]$ the rewards $r_v$ for all $v \in \Vr$.
We repeat the process $10$ times, and obtain therefore $10$ instances of the same size.

The results are reported in \Cref{tab:numerical_results}.
The first column specifies the size of the problem, the second the approximate number of admissible strategies.
The third column indicates the cuts used.
In the last four columns, we report the integrity gap (\ie the relative difference between the linear relaxation and the best integer solution), the final gap (relative difference between best integer solution and best lower bound), the improvement obtained over the solution given by SPU and the (shifted geometric mean of the) computation time for each instance.
All gaps are given in percentage. Computing times are given in seconds and correspond to the shifted geometric mean of the time over $10$ instances. All values are averaged over the $10$ instances.
In the last column, we write TL when the time limit is reached for the 10 instances of the same size.
Sometimes, the time limit is reached only for some of the $10$ instances, and we end up with a non-zero average final gap together with an average computing time that is smaller than the time limit.
% When the instance is not solved to optimality, the integrity gap corresponds the ratio of the best integer solution found and the linear relaxation.

All mixed-integer linear programs have been written in \texttt{Julia} \citep{bezanson2017julia} with \texttt{JuMP} \citep{DunningHuchetteLubin2017} interface and solved using \texttt{Gurobi} 7.5.2.
Experiments have been run on a server with 192Gb of RAM and 32 cores at 3.30GHz.
For each program, we use a warm start solution obtained by running the SPU algorithm of \citet{lauritzen2001representing} on the instances.

For notational simplicity, and since it is unambiguous, in the rest of this section we use the same notation to refer to a given node of the graph and to refer to the random variable associated with this node.
% In this section, we provide numerical experiments showcasing the results of the paper.
% In particular, on two examples of varying size, we study the impact of the valid cuts.
% We solve the MILP formulation~\eqref{pb:MILP} with the simplest McCormick bound (\ie, using
% polytope $\calP^1$), and then successively strengthen the McCormick bounds relying on \Cref{sub:algorithm_to_choose_good_quality_bounds} (\ie, using the polytope $\calP^b$)
% and the valid cuts of \Cref{sec:validCuts} (\ie, using the polytope $\Pfree$) obtained from the \rjt of \Cref{sec:algorithm_rjt}. \vl{true ?}

% All mixed-integer linear programs have been implemented\comGO{do you mean encoded? I don't understand what it means to "implement" a linear program.}\vl{I suggest "written"} in Julia with the JuMP interface and solved using Gurobi 7.5.2. Experiments were run on a server with 192Gb of RAM and 32 cores at 3.30GHz.
% %We solve the PIDs for different polytope $\calP^1$, $\calP^b$, $\Pfree$ and $\calP^b \cap \Pfree$.
% Algorithms were stopped after a time limit of $3600$s.
% Linux computer with 2.8GHz Intel Core i7 and 16Gb of RAM.

\subsection{Bob and Alice daily chess game}

We consider the chess game example represented in \Cref{fig:example_chess}.
The beginning of the \rjt built by \Cref{alg:buildRJT} for this example is represented in \Cref{fig:chess game_RJT}
%\comGO{This is the RJT just for the first "cell" of the game not for the whole graph... Need to be more clear here...}.
Since $\vartheta_{a_{t-1}} \,\nperp_{G^{\dagger}}\, \dsc{a_t} \mid \fa{a_t}$ for all $t \in [T]$, the chess game example is not a soluble ID, thus cannot be solved to optimality by SPU.
\Cref{tab:random_instances_chess game_results} reports results on the generated instances.

\begin{figure}
\begin{center}
\begin{tikzpicture}

\def\l{2}
\def\h{1}

\node[draw, rounded corners] (C1) at (-2*\l,1*\h) {---$s_1$};
\node[draw, rounded corners] (C2) at (-1*\l,1*\h) {$s_1$---$o_1$};
\node[draw, rounded corners] (C3) at (0*\l,1*\h) {$s_1 o_1$---$u_1$};
% \node[draw, rounded corners] (C4) at (1*\l,1*\h) {$uv$---$w$};
\node[draw, rounded corners] (C4) at (1.3*\l,1*\h) {$s_1 o_1 u_1$---$a_1$};
\node[draw, rounded corners] (C7) at (2.5*\l,1*\h) {$s_1 o_1 a_1$---$v_1$};
\node[draw, rounded corners] (C5) at (4*\l,2*\h) {$v_1$---$r_1$};
\node[draw, rounded corners] (C8) at (4*\l,0*\h) {$s_1 v_1$---$s_2$};

\draw[arc] (C1) -- (C2);
\draw[arc] (C2) -- (C3);
\draw[arc] (C3) -- (C4);
\draw[arc] (C7) -- (C5);
\draw[arc] (C4) -- (C7);
\draw[arc] (C7) -- (C8);
\end{tikzpicture}
\end{center}
\caption{RJT for the chess game. The element to the right of --- is the offspring $\offspring{C}_v$.}
\label{fig:chess game_RJT}
\end{figure}
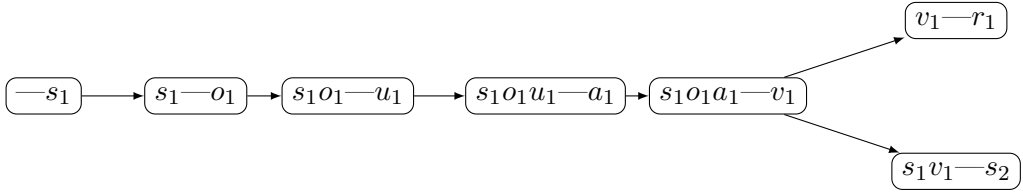

% Therefore, for each instance we assume that $\omega = \big \vert \calX_S \big \vert = \big \vert \calX_O \big \vert = \big \vert \calX_V \big \vert$, and we set $m = \big \vert \calX_A \big \vert = \big \vert \calX_U \big \vert$.
%\comGO{I am confused: why do the cardinalities have to be chosen in a given order? Let me suggest to introduce a quantity say $d=\big \vert \calX_S \big \vert=\big \vert \calX_O \big \vert=\big \vert \calX_U \big \vert=\big \vert \calX_A \big \vert=\big \vert \calX_V \big \vert,$ to only put this quantity in the table, and to mention the equality above in the legend}.
% We then draw uniformly on $[0,1]$ the initial probability $p(s) = \bbP(s_1=s)$, the transition probabilities $p(s'|s,v) = \bbP(s_{t+1} = s'| s_t=s, v_t = v)$%\inlGO{Why not call this just $p(s_{t+1}|s_{t})$? Introducing different letters for the name of the variable and for its value does not seem necessary and complicates things.}
% and $p(v|o,a) = \bbP(v_t = v| o_t=o, a_t=a)$, the emission probabilities $p(o|s)= \bbP(o_t = o| s_t=s)$ and $p(u|o) = \bbP(u_t = u| o_t=o)$
%\inlGO{from which distribution? Same question for all},
% and the immediate reward function $r(v)$ for all $s, s' \in \calX_S$, $o \in \calX_O$, $u \in \calX_U$, $a \in \calX_A$, and $v \in \calX_V$.
%\inlGO{The reward is itself drawn at random? Does this make sense in the context of the game described?}.

\begin{table}[!h]
\renewcommand\arraystretch{0.8}
{\scriptsize
\begin{subtable}{\textwidth}
\centering
    \begin{tabular}{|c|c|S|SScc|}
        \hline
        \multicolumn{1}{|c}{$(\omega_s,\omega_a,T)$} & \multicolumn{1}{c}{$|\Delta|$}&
        %\multicolumn{1}{c}{$T$} &
        \multicolumn{1}{c}{Polytope} & \multicolumn{1}{c}{Int. Gap } & \multicolumn{1}{c}{Final Gap } & \multicolumn{1}{c}{SPU Gap } & \multicolumn{1}{c}{Time (s)}  \vline\\ \hline
        % $(4,3,10)$ & $10^{14}$ & ${\overline{\calQ}}^{1}$ & 1.45 & Opt & 0.00 & 1.86  \\
        %    & & ${\overline{\calQ}}^{b}$  & 1.16 & Opt & 0.00 & 1.86 \\
        %   & & $\calQ^{\smallindep,1}$ & 0.00 & Opt & 0.00 & 0.08 \\
        %   & & $\calQ^{\smallindep,b}$ & 0.00 & Opt & 0.00 & 0.08 \\ \cline{1-7}
        % $(4,4,10)$ & $10^{24}$ & ${\overline{\calQ}}^{1}$ & 11.04 & Opt & 0.71 & 42.70  \\
        %      &    & ${\overline{\calQ}}^{b}$ & 9.11 & Opt & 0.71 & 46.61 \\
        %      &    & $\calQ^{\smallindep,1}$ & 2.89 & Opt & 0.71 & 4.09 \\
        %      &   & $\calQ^{\smallindep,b}$ & 2.44 & Opt & 0.71 & 3.10 \\ \cline{1-7}
        % $(3,5,10)$ & $10^{34}$ & ${\overline{\calQ}}^{1}$ & 5.45 & Opt & 0.13 & 32.4  \\
        %      &    & ${\overline{\calQ}}^{b}$ & 4.57 & Opt & 0.13 & 34.3 \\
        %      &    & $\calQ^{\smallindep,1}$ & 1.36 & Opt & 0.13 & 5.42 \\
        %      &   & $\calQ^{\smallindep,b}$ & 1.14 & Opt & 0.013 & 4.04 \\ \hline

        $(3,4,20)$& $10^{48}$ & ${\overline{\calQ}}^{1}$   & 5.18 & 0.35 & 0.02 &  299.1  \\
              &   & ${\overline{\calQ}}^{b}$ & 4.62 & 0.28 & 0.02 & 264.1 \\
              &   & $\calQ^{\smallindep,1}$  & 1.00 & 0.20 & 0.02 & 76.8 \\
              &   & $\calQ^{\smallindep,b}$  & 0.90 & 0.24 & 0.02 & 57.1 \\ \hline

        $(3,5,20)$ & $10^{69}$ & ${\overline{\calQ}}^{1}$ & 5.49 & 0.46 & 0.13 & 498.1  \\
            &    & ${\overline{\calQ}}^{b}$ & 5.05 & 0.46 & 0.13 & 562.3 \\
             &    & $\calQ^{\smallindep,1}$ & 1.38 & 0.27 & 0.13 & 200.5 \\
             &    & $\calQ^{\smallindep,b}$ & 1.27 & 0.23 & 0.013 & 181.7 \\ \hline

        $(3,6,20)$ & $10^{93}$ & ${\overline{\calQ}}^{1}$ & 4.17 & 0.48 & 0.05 & 1948.9  \\
             &   & ${\overline{\calQ}}^{b}$ & 3.84 & 0.36 & 0.05 & 1563.1 \\
             &   & $\calQ^{\smallindep,1}$ & 0.73 & 0.20 & 0.05 & 594.9 \\
             &    & $\calQ^{\smallindep,b}$ & 0.69 & 0.20 & 0.05 & 1109.5 \\ \hline
        $(3,9,20)$ & $10^{171}$ & ${\overline{\calQ}}^{1}$ & 6.57 & 1.50 & 0.16 & 2752 .2 \\
             &    & ${\overline{\calQ}}^{b}$ & 5.97 & 1.90 & 0.13 & 3067.8 \\
             &    & $\calQ^{\smallindep,1}$ & 1.05 & 0.37 & 0.16 & 843.6 \\
             &    & $\calQ^{\smallindep,b}$ & 1.00 & 0.37 & 0.13 & 868.2 \\ \hline
        $(3,10,20)$ & $10^{200}$ & ${\overline{\calQ}}^{1}$ & 6.99 & 2.28 & 0.04 & TL \\
             &    & ${\overline{\calQ}}^{b}$ & 6.45 & 2.39 & 0.04 & TL \\
             &    & $\calQ^{\smallindep,1}$ & 1.33 & 0.82 & 0.04 & 1759.3 \\
             &    & $\calQ^{\smallindep,b}$ & 1.25 & 0.81 & 0.04 & 1758.3 \\ \hline
        $(4,10,20)$ & $10^{200}$ & ${\overline{\calQ}}^{1}$ & 8.49 & 4.59 & 0.14 & TL \\
             &    & ${\overline{\calQ}}^{b}$ & 8.10 & 4.97 & 0.03 & TL \\
             &    & $\calQ^{\smallindep,1}$ & 2.40 & 1.77 & 0.11 & TL \\
             &    & $\calQ^{\smallindep,b}$ & 2.26 & 1.74 & 0.13 & TL \\ \hline
  \end{tabular}
  \caption{Results on chess game example
   \label{tab:random_instances_chess game_results}}
\end{subtable}

\vspace{0.5cm}

\renewcommand\arraystretch{0.8}
\begin{subtable}{\textwidth}
\centering
     \begin{tabular}{|c|c|S|SSSc|}
        \hline
        \multicolumn{1}{|c}{$(\omega_s,\omega_a,T)$} & \multicolumn{1}{c}{$|\Delta|$}&
        \multicolumn{1}{c}{Polytope} & \multicolumn{1}{c}{Int. Gap } & \multicolumn{1}{c}{Final Gap } & \multicolumn{1}{c}{SPU Gap } & \multicolumn{1}{c|}{Time (s)}  \vline\\ \hline
        % $(3, 4, 10)$  & $10^{24}$ & ${\overline{\calQ}}^{1}$ & 12.86 & 2.43 & 0.46 & 2354.5  \\
        %   &  & ${\overline{\calQ}}^{b}$ & 11.22 & 2.15 & 0.54 & 2328.4 \\
        %   &  & $\calQ^{\smallindep,1}$ & 4.70 & 0.58 & 0.55 & 820.2 \\
        %   &  & $\calQ^{\smallindep,b}$ & 3.57 & 0.63 & 0.55 & 625.8  \\ \hline
       % $(4, 4, 10)$  & $10^{24}$ & ${\overline{\calQ}}^{1}$ & 12.86 & 2.43 & 0.46 & 2354.5  \\
       %    &  & ${\overline{\calQ}}^{b}$ & 11.22 & 2.15 & 0.54 & 2328.4 \\
       %    &  & $\calQ^{\smallindep,1}$ & 4.70 & 0.58 & 0.55 & 820.2 \\
       %    &  & $\calQ^{\smallindep,b}$ & 3.57 & 0.63 & 0.55 & 625.8  \\ \hline
        % $(4, 3, 20)$  & $10^{28}$ & ${\overline{\calQ}}^{1}$ & 5.86 & 1.90 & 0.57 & 73.2 \\
        %   &  & ${\overline{\calQ}}^{b}$ & 5.47 & 1.77 & 0.58 & 68.7 \\
        %   &  & $\calQ^{\smallindep,1}$ & 1.64 & 0.54 & 0.58 & 65.2 \\
        %   &  & $\calQ^{\smallindep,b}$ & 1.44 & 0.48 & 0.58 & 64.7 \\ \hline
        % $(3,5,10)$ & $10^{34}$ & ${\overline{\calQ}}^{1}$ & 7.72 & 1.16 & 0.75 & 117.0 \\
        %   &  & ${\overline{\calQ}}^{b}$ & 6.61 & 0.98 & 0.75 & 113.0 \\
        %   &  & $\calQ^{\smallindep,1}$ & 2.13 & 0.24 & 0.75 & 71.2 \\
        %   &  & $\calQ^{\smallindep,b}$ & 1.35 & 0.22 & 0.75 & 60.6 \\ \hline

        $(3,4,20)$ & $10^{48}$ & ${\overline{\calQ}}^{1}$ & 5.34 & 0.58 & 4.15 & 563.1 \\
          &  & ${\overline{\calQ}}^{b}$ & 4.99 & 0.41 & 4.15 & 384.6 \\
          &  & $\calQ^{\smallindep,1}$ & 1.36 & Opt & 4.15 & 77.8 \\
          &  & $\calQ^{\smallindep,b}$ & 1.11 & Opt & 4.15 & 71.2 \\ \hline
        % $(4,6,10)$ & $10^{46}$ & ${\overline{\calQ}}^{1}$ & 10.12 & 3.90 & 0.59 & 3537.0  \\
        %   &  & ${\overline{\calQ}}^{b}$ & 8.70 & 3.64 & 0.60 & 3009.0 \\
        %   &  & $\calQ^{\smallindep,1}$ & 3.30 & 1.12 & 0.60 & 2691.8 \\
        %   &  & $\calQ^{\smallindep,b}$ & 2.26 & 1.05 & 0.60 & 2435.6 \\ \hline
        $(3, 5, 20)$ & $10^{69}$ & ${\overline{\calQ}}^{1}$ & 7.74 & 3.90 & 0.73 & 1090.9 \\
          &  & ${\overline{\calQ}}^{b}$ & 7.23 & 3.60 & 0.69 & 985.8 \\
          &  & $\calQ^{\smallindep,1}$ & 1.85 & 0.78 & 0.73 & 282.5 \\
          &  & $\calQ^{\smallindep,b}$ & 1.46 & 0.79 & 0.73 & 245.6 \\ \hline
        % $(4, 8, 10)$ & $10^{72}$ & ${\overline{\calQ}}^{1}$ & 12.96 & 6.85 & 1.12 & TL  \\
        %   &   & ${\overline{\calQ}}^{b}$ & 11.19 & 6.60 & 1.12 & TL \\
        %   &   & $\calQ^{\smallindep,1}$ & 3.74 & 1.65 & 1.12 & 2269.4 \\
        %   &   & $\calQ^{\smallindep,b}$ & 2.40 & 1.70 & 1.12 & 2822.3 \\ \hline
        $(3, 6, 20)$ & $10^{93}$ & ${\overline{\calQ}}^{1}$ & 9.01 & 5.68 & 0.74 & TL  \\
          &   & ${\overline{\calQ}}^{b}$ & 8.47 & 5.42 & 0.74 & TL \\
          &   & $\calQ^{\smallindep,1}$ & 1.67 & 1.02 & 0.74 & 1935.0 \\
          &   & $\calQ^{\smallindep,b}$ & 1.37 & 1.00 & 0.74 & 1533.8 \\ \hline

        $(3,9,20)$ & $10^{171}$ & ${\overline{\calQ}}^{1}$ & 8.09 & 5.94 & 1.67 & TL \\
          &  & ${\overline{\calQ}}^{b}$ & 7.60 & 5.47 & 1.71 & TL \\
          &  & $\calQ^{\smallindep,1}$ & 2.45 & 1.86 & 1.59 & 2729.6 \\
          &  & $\calQ^{\smallindep,b}$ & 2.07 & 1.87 & 1.60 & 2894.9 \\ \hline
        $(3,10,20)$ & $10^{200}$ & ${\overline{\calQ}}^{1}$ & 12.40 & 10.0 & 1.24 & TL \\
          &  & ${\overline{\calQ}}^{b}$ & 11.76 & 9.95 & 1.23 & TL \\
          &  & $\calQ^{\smallindep,1}$ & 4.45 & 3.86 & 1.05 & TL \\
          &  & $\calQ^{\smallindep,b}$ & 3.87 & 3.77 & 1.11 & TL \\ \hline
          $(4,8,20)$ & $10^{144}$ & ${\overline{\calQ}}^{1}$ & 12.90 & 9.89 & 1.20 & TL \\
          &  & ${\overline{\calQ}}^{b}$ & 12.00 & 9.70 & 1.23 & TL \\
          &  & $\calQ^{\smallindep,1}$ & 3.14 & 2.27 & 1.20 & TL \\
          &  & $\calQ^{\smallindep,b}$ & 2.43 & 2.22 & 1.23 & TL \\ \hline

  \end{tabular}
  \caption{Results on POMDP example
  \label{tab:random_instances_POMDP_results}}
\end{subtable}

  }
 \caption{Mean results on $10$ randomly generated instances with a time limit TL=$3600$s.
 All gaps are relative and given in $\%$.
 We note TL when the time limit is reached, in which case the gap between the best integer solution and the lower bound is reported, otherwise we write Opt to specify that the solver reached optimality.
 \label{tab:numerical_results}}

 \end{table}

In this problem we see that we can tackle large problems : we can reach optimality in less than one hour for a strategy set of size $10^{144}$, and find a small provable gap on even bigger instances.
Moreover, we see that the independance cuts reduce the computation time by a factor 100, whereas the improved McCormick bounds yield less impactfull improvements.

However, on this problem the SPU heuristic yields good results that are marginally improved
by our MILP formulation. On this problem the main interest of our formulation is the bounds obtained.
In the next problem we show better improvement.

\subsection{Partially Observed Markov Decision Process with limited memory}

Another classical example of ID is the Partially Observed Markov Decision Process (POMDP) introduced in Section~\ref{sec:introduction}.
% The influence diagram corresponding to the instance given in \Cref{fig:POMDP} is soluble since we assume perfect-recall.
% However, the number of parents of the actions grows over time and it makes the resolution intractable.
% Therefore, we relax the perfect-recall assumption (\citet[Figure 2]{lauritzen2001representing})
% \ie at each time $t \in [T]$ the decision maker has only access to the current observation $o_t$ when he takes his action $a_t$ instead of the history of actions and observations $\{o_1, a_1, \ldots, o_t\}$. \comGO{Better motivation to consider this example?}
% Instead of making a decision given all previous observations and actions, we assume that, when taking the action $a_t$ we only remember the current observation $o_t$ \inlGO{Is the POMDP already discussed in the paper? If yes, ad a ref to the corresponding section, if not we need a better definition.}.
Figure~\ref{fig:POMDP_limited_memory} provides the graph representation of the POMDP with limited information. Since $\vartheta_{a_{t-1}} \,\nperp_{G^{\dagger}}\, \dsc{a_t} | \prt{a_t}$ for all $t \in [T]$, this ID is not soluble. Figure~\ref{fig:POMDP_RJT_limited_memory_vcuts} represents the RJT built by Algorithm~\ref{alg:buildRJT}.

However, for $v \in \Va$, $C_v^{\smallindep} = \emptyset$ and the linear relaxation of Problem \eqref{pb:MILP} does not enforce all the conditional independences that are entailed by the graph structure. Indeed, Theorem~\ref{theo:PolytopesAndGraphs} ensures that the linear relaxation of the MILP \eqref{pb:MILP} corresponds to solving Problem \eqref{pb:LIMID} on the graph $\overline{G}$.
For this example $\overline{G}$ corresponds to the MDP relaxation, in which the decision maker knows the state $s_t$ when he makes the decision $a_t$.
Therefore, the conditional independences $s_t \perp a_t | o_t$ is no more satisfied.
Although we cannot enforce these independences with linear constraints, we propose slightly weaker independences: in particular,
we propose an extended formulation corresponding to the bigger \rjt represented in \Cref{fig:POMDP_RJT_limited_memory_vcuts} to enforce for $s_t$ to be conditionally independent from $a_t$ given $(s_{t-1},a_{t-1},o_t)$ for $t>1$.
% \comGO{Maybe having a slightly longer discussion of this new way of introducing valid cuts would be useful...}.
In such a \rjt, we have $C_{a_t}^{\smallindep} = \big\{ s_t \big\}$.
Therefore, we can derive valid equalities ~\eqref{eq:validInequality} in $\Pfree$.
We demonstrate the efficiency of such inequalities by solving the different formulation on a set of instances, summed up in \Cref{tab:random_instances_POMDP_results}.

This example is harder to solve to optimality as we only reach strategy set of size $10^{72}$.
Further, we can see that there are some instances where SPU seems to reach a local maximum that
is improved by our MILP formulation. Once again the valid cuts significantly reduce the root linear
relaxation gap and the solving time, even on large instances.
\begin{figure}
\begin{center}
\begin{tikzpicture}

\def\h{1.5}

\node[sta] (s1) at (0,2*\h) {$s_1$};
\node[sta] (o1) at (0,1*\h) {$o_1$};
\node[uti] (c1) at (0,3*\h) {$c_1$};
\node[act] (a1) at (1,1*\h) {$a_1$};

\node[sta] (s2) at (2,2*\h) {$s_2$};
\node[sta] (o2) at (2,1*\h) {$o_2$};
\node[uti] (c2) at (2,3*\h) {$c_2$};
\node[act] (a2) at (3,1*\h) {$a_2$};

\node[sta] (s3) at (4,2*\h) {$s_3$};
\node[sta] (o3) at (4,1*\h) {$o_3$};
\node[uti] (c3) at (4,3*\h) {$c_3$};
\node[act] (a3) at (5,1*\h) {$a_3$};

\node[sta] (s4) at (6,2*\h) {$s_4$};
\node[sta] (o4) at (6,1*\h) {$o_4$};

\node (tar) at (3,0.6*\h) {};

\draw[arc] (s1) -- (s2);
\draw[arc] (s1) -- (o1);
\draw[arc] (a1) -- (c1);
\draw[arc] (o1) -- (a1);
\draw[arc] (a1) -- (s2);
\draw[arc] (s1) -- (c1);

\draw[arc] (s2) -- (o2);
\draw[arc] (o2) -- (a2);
\draw[arc] (s2) -- (c2);
\draw[arc] (a2) -- (c2);
\draw[arc] (a2) -- (s3);
\draw[arc] (s2) -- (s3);

\draw[arc] (s3) -- (c3);
\draw[arc] (s3) -- (o3);
\draw[arc] (o3) -- (a3);
\draw[arc] (a3) -- (c3);
\draw[arc] (a3) -- (s4);

\draw[arc] (s3) -- (s4);
\draw[arc] (s4) -- (o4);

\end{tikzpicture}
\end{center}
\caption{Partially Observed Markov Decision Process with limited memory}
\label{fig:POMDP_limited_memory}
\end{figure}
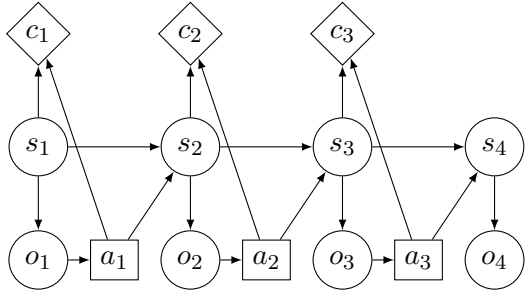

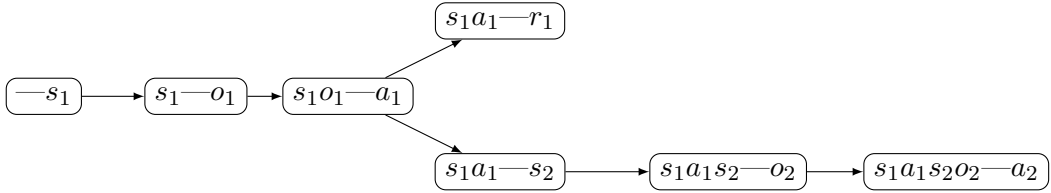
\begin{figure}
\begin{center}
\begin{tikzpicture}

\def\l{2}
\def\h{1}

\node[draw, rounded corners] (C1) at (-2*\l,1*\h) {---$s_1$};
\node[draw, rounded corners] (C2) at (-1*\l,1*\h) {$s_1$---$o_1$};
\node[draw, rounded corners] (C3) at (0*\l,1*\h) {$s_1 o_1$---$a_1$};
% \node[draw, rounded corners] (C4) at (1*\l,1*\h) {$uv$---$w$};
\node[draw, rounded corners] (C4) at (1*\l,0*\h) {$s_1 a_1$---$s_2$};
\node[draw, rounded corners] (C5) at (1*\l,2*\h) {$s_1 a_1$---$r_1$};
\node[draw, rounded corners] (C7) at (2.5*\l,0*\h) {$s_1 a_1 s_2$---$o_2$};
\node[draw, rounded corners] (C8) at (4*\l,0*\h) {$s_1 a_1 s_2 o_2$---$a_2$};
\draw[arc] (C1) -- (C2);
\draw[arc] (C2) -- (C3);
\draw[arc] (C3) -- (C5);
\draw[arc] (C3) -- (C4);
\draw[arc] (C4) -- (C7);
\draw[arc] (C7) -- (C8);
\end{tikzpicture}
\end{center}
\caption{Rooted Junction Tree for a Partially Observed Markov Decision Process with limited memory.}
\label{fig:POMDP_RJT_limited_memory_vcuts}
\end{figure}

\section*{Conclusion}
\label{sec:conclusion}
%%%%%%%%%%%%%%%%%%%%%%%%%%%%%%%%%%%%%%%%%%%%%%%%%%%%%%%%%%%%%%%%%%%%%%%%%%%%%%%
%%%%%%%%%%%%%%%%%%%%%%%%%%%%%%%%%%%%%%%%%%%%%%%%%%%%%%%%%%%%%%%%%%%%%%%%%%%%%%%

This paper introduced linear and mixed integer linear programming approaches for the \meu problem on influence diagrams.
The variables of the programs correspond, for the distributions induced by feasible policies, to the collection of vector of moments of the distribution on subsets of the variables that are associated to nodes of a new kind of junction tree, that we call a rooted junction tree.
We have thus introduced as well algorithms to build rooted junction trees tailored to our linear and integer programs.

For soluble IDs, which are IDs whose \meu problem is easy, in the sense that it can be solved by the single policy update algorithm, we showed that it can also be solved exactly via our linear programs.
Furthermore, we characterized soluble IDs as the IDs for which there exists a rooted junction tree such that the set of possible vector of moments on the nodes of the tree is convex for any parametrization of the influence diagram.

Finally, we proposed a mixed integer linear programming approach to solve the \meu problem on non-soluble IDs, together with valid cuts.
The bound provided by the linear relaxation is better than the bound that could be obtained using SPU on a soluble relaxation.
Numerical experiments show that the bound is indeed better in practice.

% \inlAP{Numerical experiments?}

% soluble = convex -> LP for influence diagrams

% non-soluble: MILP and valid cuts

% numerical results showing the relevance of the appr

% \newpage

\bibliographystyle{plainnat}
\bibliography{stoOptimGraphModel}

\newpage 

\appendix
%!TEX root=../main_MP.tex
\section{Rooted junction tree properties}
\label{app:RJT}
In this section we present further technical results on \rjt that are usefull in the analysis of our approach.
We start we generic properties of \rjt.
\begin{prop}\label{prop:rjtProperties}
Let $\calG$ be an \rjt on $G$.
\begin{enumerate}
	\item\label{prop:rjtOrderCompatible}
	If there is a path from $u$ to $v$ in $G$, then there is a path from $C_u$ to $C_v$ in $\calG$.
	\item\label{prop:rjtProperties2} If $\cdsc[G]{u} \cap \cdsc[G]{v} \neq \emptyset$, then either there is a unique path from $C_u$ to $C_v$ or from $C_v$ to $C_u$ in $\calG$.
\end{enumerate}
\end{prop}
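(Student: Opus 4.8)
The plan is to prove the two items in order, exploiting the running intersection property of the underlying junction tree together with condition (ii) in the definition of an \rjt, namely $\fa{v}\subseteq C_v$.

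For item~\ref{prop:rjtOrderCompatible}, I would first reduce to the case of a single arc $u\to v$ in $G$ by composing paths: if $u\rcarrow[G] v$ via $u=w_0\to w_1\to\cdots\to w_k=v$, then concatenating the paths $C_{w_{i-1}}\rcarrow C_{w_i}$ in $\calG$ gives $C_u\rcarrow C_v$ (here one uses that $\calG$ is a tree, so ``path'' can be read as ``undirected path'', but in fact the statement asks for a directed path in $\calG$, and directed subpaths compose). So assume $u\to v$ in $G$. Then $u\in\prt{v}\subseteq\fa{v}\subseteq C_v$, so $u$ belongs to both $C_u$ and $C_v$; hence $C_u$ and $C_v$ both lie in the subtree $\calG_u$ of nodes containing $u$, which is a rooted subtree with root $C_{\{u\}}=C_u$ (the root clique of $u$). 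Since $C_u$ is the root of $\calG_u$ and $C_v\in\calG_u$, there is a directed path from $C_u$ to $C_v$ in $\calG_u$, hence in $\calG$. This is the key step and it is short; the main thing to be careful about is to invoke the correct fact from the paragraph preceding Definition~\ref{def:rjt}, that any orientation making $\calG$ a rooted tree makes each $\calG_v$ a rooted tree with root $C_v$.

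For item~\ref{prop:rjtProperties2}, suppose $w\in\cdsc[G]{u}\cap\cdsc[G]{v}$, so $u\rcarrow[G] w$ and $v\rcarrow[G] w$ (allowing the degenerate cases $w=u$ or $w=v$). By item~\ref{prop:rjtOrderCompatible}, there are directed paths $C_u\rcarrow C_w$ and $C_v\rcarrow C_w$ in $\calG$. In a rooted tree, the set of ancestors of any fixed node $C_w$ is totally ordered by the ancestor relation (it is the unique path from the root to $C_w$); since $C_u$ and $C_v$ are both ancestors of $C_w$ (or equal to it), one of them is an ancestor of the other, i.e.\ there is a directed path $C_u\rcarrow C_v$ or $C_v\rcarrow C_u$. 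Uniqueness of the path follows because $\calG$ is a tree, so there is at most one undirected path between any two nodes. I would phrase the ``ancestors are totally ordered'' observation explicitly, as it is the crux: in a rooted tree every non-root vertex has a unique parent, so following parents from $C_w$ yields a single chain containing every ancestor of $C_w$.

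The only mild obstacle is bookkeeping around the degenerate cases ($u=v$, or $w$ equal to $u$ or $v$, or $C_u=C_v$) and making sure that ``path from $C_u$ to $C_v$'' is interpreted as a directed path of length possibly zero; once that convention is fixed, both arguments are a couple of lines each, resting entirely on elementary rooted-tree structure plus the running intersection property already built into Definition~\ref{def:rjt}.
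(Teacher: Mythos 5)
Your proposal is correct and follows essentially the same route as the paper: item 1 is reduced to a single arc using $\fa{v}\subseteq C_v$ and the fact that $C_u$ is the root of the subtree $\calG_u$ of clusters containing $u$, then composed by induction along the path; item 2 then follows because $C_u$ and $C_v$ are both ancestors (or equal to) $C_w$ in the rooted tree $\calG$. Your extra remarks on the total ordering of ancestors and on uniqueness of tree paths merely make explicit what the paper leaves implicit.
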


\begin{proof}
Let $\calG$ be an \rjt on $G$.
Consider a vertex $v$ of $G$ and a node $C$ of $\calG$ containing $v$.
Since $C$ is a node of $\calG_v$, and by definition of $C_v$, there exists a $C_v$-$C$ path in $\calG$.
Now consider $u\in \prt{v}$. Since $\fa{v} \subseteq C_v$, we have $u \in C_v$.
Thus there exists a $C_u$-$C_v$ path in $\calG$. The first statement follows by induction along a $u$-$v$ path in $G$.

We now show the second statement. Let $w$ be a vertex in $\cdsc[G]{u} \cap \cdsc[G]{v}$,
then by the first statement there exists both a $C_u$-$C_w$ and a $C_v$-$C_w$ path in $\calG$.
As $\calG$ is a rooted tree, this implies the existence of either a $C_u$-$C_v$ path or of a $C_v$-$C_u$ path in $\calG$.
\end{proof}

The following lemma is key in proving Theorem~\ref{theo:rjtFactorization}.
\begin{lem}\label{lem:rjtFactorization}
% \comAP{Replace $\tau$ by $\mu$}
Let $C,D$ be subsets of $V$ such that $\fa{D} \subseteq C$ and $\cdsc{D} \cap C = D$. %\vl{$\dsc{D}\cap C = \emptyset$ ?}\comAP{I prefer like it is now because it is clearer that it is a down-set}.
Any distribution $\mu_C$ on $C$ such that each $v$ in $D$ is independent from its non-descendants given its parents factorizes as
$\mu_C =\displaystyle \mu_{C\backslash D} \prod_{v \in D}q_{v|\prt{v}}$ where $\displaystyle\mu_{C\backslash D} = \sum_{x_D}\mu_C$ and $q_{v|\prt{v}}$ is defined as $\frac{\sum_{x_{C\backslash \fa{v}}}\mu_C}{\sum_{x_{C\backslash \prt{v}}}\mu_C}$ when the denominator is non-zero, and as $0$ otherwise.
\end{lem}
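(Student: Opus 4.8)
The plan is to argue by induction on $|D|$, peeling off one vertex of $D$ at a time; this mirrors the classical proof of Proposition~\ref{prop:factorizationDirectedGraph}, but localized to the pair $(C,D)$. When $D=\emptyset$ the statement is trivial, since the product is empty and $\mu_{C\backslash D}=\mu_C$. For the inductive step, the first observation is that if $v\in D$ is maximal among the vertices of $D$ for a topological order of $G$, then $v$ is in fact a sink of the induced subgraph $G[C]$: any descendant $w$ of $v$ lying in $C$ belongs to $\dsc{D}\cap C\subseteq\cdsc{D}\cap C=D$, hence to $\dsc{v}\cap D$, which is empty by maximality — a contradiction. Thus $\dsc{v}\cap C=\emptyset$, and the non-descendants of $v$ inside $C$ are exactly $C\backslash\{v\}$.

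Next, I would use the assumption that $v$ is independent from its non-descendants given $\prt{v}$ according to $\mu_C$; since $\prt{v}\subseteq C$ (because $\fa{D}\subseteq C$) this reads $X_v\indep X_{C\backslash\fa{v}}\mid X_{\prt{v}}$. Writing this conditional independence in its ``factorization form'', namely the pointwise identity $\mu_C\,\mu_{\prt{v}}=\mu_{\fa{v}}\,\mu_{C\backslash\{v\}}$ on $\calX_C$ (which also covers configurations of zero probability, since there $\mu_{\fa{v}}$ and $\mu_{C\backslash\{v\}}$ both vanish whenever $\mu_{\prt{v}}$ does), one obtains $\mu_C=\mu_{C\backslash\{v\}}\,q_{v|\prt{v}}$ with $q_{v|\prt{v}}$ exactly the quantity defined in the statement — equal to $\mu_C(x_v\mid x_{\prt{v}})$ on configurations with $\mu_{\prt{v}}(x_{\prt{v}})>0$, and with both sides equal to $0$ otherwise, which is what the $0/0\mapsto 0$ convention handles.

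Then I would apply the induction hypothesis to the marginal $\mu_{C\backslash\{v\}}$, with $C'=C\backslash\{v\}$ and $D'=D\backslash\{v\}$, after checking that the three assumptions persist. We have $\fa{D'}\subseteq C'$ because $v$ cannot be a parent of any $u\in D'$ (that would put $u\in\dsc{v}\cap C=\emptyset$); $\cdsc{D'}\cap C'=D'$ since $\cdsc{D'}\cap C'\subseteq\cdsc{D}\cap C=D$ while $v\notin C'$, and $D'\subseteq\cdsc{D'}\cap C'$ is immediate; and for each $u\in D'$ the conditional independence of $X_u$ from its non-descendants given $\prt{u}$ is inherited by $\mu_{C'}$ from $\mu_C$ by the decomposition property of conditional independence (marginalizing out $v$ merely removes $v$ from the block of non-descendants). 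The induction then gives $\mu_{C'}=\mu_{C\backslash D}\prod_{u\in D'}q_{u|\prt{u}}$, where the $q_{u|\prt{u}}$ computed from $\mu_{C'}$ coincide with those computed from $\mu_C$ because $v\notin\fa{u}$, so the relevant marginals on $\fa{u}$ and on $\prt{u}$ are unchanged. Multiplying by $q_{v|\prt{v}}$ yields the claimed factorization of $\mu_C$, completing the induction.

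I expect the main difficulty to be the bookkeeping of the inductive step rather than any deep idea: namely checking that a topologically maximal vertex of $D$ is a sink of $G[C]$ (this is precisely where the hypothesis $\cdsc{D}\cap C=D$ is used), that all three hypotheses survive marginalization of that vertex, and that the conditionals $q_{u|\prt{u}}$ are stable under this marginalization. The other point requiring some care is the systematic pointwise treatment of zero-probability configurations throughout, for which the convention built into the definition of $q_{v|\prt{v}}$ is tailor-made.
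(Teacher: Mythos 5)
Your proof is correct and follows essentially the same route as the paper's: the paper orders $C$ topologically with $C\backslash D$ first and applies the chain rule, replacing each conditional $\bbP_{\mu}(X_v \mid X_u,\, u \prec v)$ by $q_{v|\prt{v}}$ via the independence hypothesis, which is exactly your induction unrolled one vertex at a time. Your write-up is somewhat more careful than the paper's one-line argument (explicit handling of zero-probability configurations and of the persistence of the hypotheses under marginalization), but the underlying decomposition is the same.
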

\begin{proof}
% \comAP{A corollary of the Theorem that gives that a distribution factorizes on a directed graphical model each $v$ is independent from its non-descendants given its parents factorizes as}
Let $\preceq$ be a topological order on $C$ such that $u \in C\backslash D$ and $v\in D$ implies $u\preceq v$.
Such a topological order exists since $\cdsc{D} \cap C = D$. We have
\begin{equation}%{equation*}
	\mu_C = \mu_{C \backslash D} \prod_{v \in D} \bbP_{\mu}(X_v | X_u, u \in C, u \prec v) = \mu_{C \backslash D} \prod_{v \in D} \bbP_{\mu}(X_v | X_{\prt{v}}),
\end{equation}%{equation*}
where the first equality is the chain rule and the second follows from the hypothesis of the lemma.
% which gives the result.
\end{proof}

\begin{proof}[Proof of Theorem~\ref{theo:rjtFactorization}]
Let $\calG$ be an \rjt on $G$.
Let $C_1,\ldots,C_n$ be a topological ordering on $\calG$, 
let $C_{\leq i} = \bigcup_{j \leq i}C_j$,
and $C_{<i} = C_{\leq_i} \backslash C_i$.
Let $\tau$ be a vector of moments satisfying the hypothesis of the theorem, and for each $v$ in $V$, let
$q_{v|\prt{v}}$ be equal to $\frac{\sum_{x_{C\backslash \fa{v}}}\tau_{C_v}}{\sum_{x_{C\backslash \prt{v}}}\tau_{C_v}}$ if the denominator is non-zero, and to $0$ otherwise.
We show by induction on $i$ that 
$$\mu_{C_{\leq i}} = \prod_{v \in C_{\leq i}} q_{v|\prt{v}} 
\quad\text{is such that}\quad
\tau_{C_{i'}} = \sum_{x_{C_{\leq i} \backslash C_{i'}}}\mu_{C_{\leq i}}
\quad \text{for all } {i'}\leq i.$$
% admits $\mu_{C_j}$ as moments on $X_{C_j}$ for $j\leq i$\vl{unclear for me}.
Suppose the result true for all $j < i$, with the convention that $\mu_{0} = 1$.
We immediately deduce from the induction hypothesis that $\tau_{C_{i'}} = \sum_{x_{C_{\leq i} \backslash C_{i'}}}\mu_{C_{\leq i}}$ for all ${i'}< i $.
It only remains to prove to prove $\tau_{C_i} = \sum_{x_{C_{< i}}}\mu_{C_{\leq i}}$.
By definition of an \rjt, we have $\fa{\offspring{C_i}} \subseteq C_i$.
% \vl{$\fa{}$ is applied to a set of vertices}.
Proposition~\ref{prop:rjtProperties} implies that $\dsc{\offspring{C_i}} \cap C_i \subseteq \offspring{C_i}$.
Indeed let $u $ be in $\dsc{\offspring{C_i}} \cap C_i$. Then there is a $C_i$-$C_u$ path as $u \in \dsc{C_i}$, and a $C_u$-$C_i$ path as $u\in C_i$. Hence $C_u = C_i$ and $u \in \offspring{C_i}$.
By Lemma~\ref{lem:rjtFactorization}, we have $\tau_{C_i} = \tau_{C_i \backslash \offspring{C_i} } \prod_{v\in \offspring{C_i}} q_{v|\prt{v} }$. % where $q_{v|\prt{v}}$ is defined as in the statement of the theorem.
Let $C_j$ be the parent of $C_i$ in $\calG$, we have $\tau_{C_i \backslash \offspring{C_i}} = \sum_{x_{C_j\backslash C_i}} \tau_{C_j} 
% = \mu_{C_i \backslash \offspring{C_i}} 
= \sum_{x_{C_{< i} \backslash C_i}} \mu_{C_{<i}}$, the first equality coming from the fact that $(\tau_C)_{C \in \calV}$ belongs to the local polytope, and the second from the induction hypothesis.
% by we have $\mu_{C_i \backslash \offspring{C_i}} = \sum_{x_{C_{< i} \backslash C_i}} \mu_{C_{<i}}$.
Thus,
% $$ \sum_{x_{C_{<i}}}  \mu_{C_{\preceq i}}
% = \sum_{x_{C_{<i}}} \prod_{v \in V_{\leq i}} q_{v|\prt{v}}
% = \left(\sum_{x_{C_{< i} \backslash C_i}} \mu_{C_{<i}}\right)\prod_{v \in \offspring{C_i}} q_{v|\prt{v}} = \mu_{C_i \backslash \offspring{C_i}}\prod_{v \in \offspring{C_i}} q_{v|\prt{v}} = \mu_{C_i}, $$
\begin{align*}
% \mu_{C_i} & =
\sum_{x_{C_{<i}}}  \mu_{C_{\leq i}}
 = \sum_{x_{C_{<i}}} \prod_{v \in V_{\leq i}} q_{v|\prt{v}} %& \text{by induction hypothesis}
 = \Big(\sum_{x_{C_{< i} \backslash C_i}} \mu_{C_{<i}}\Big)\prod_{v \in \offspring{C_i}} q_{v|\prt{v}}
 = \tau_{C_i \backslash \offspring{C_i}}\prod_{v \in \offspring{C_i}} q_{v|\prt{v}} = \tau_{C_i},
\end{align*}
which gives the induction hypothesis, and the theorem.

% For each node $C$ in $\Vcl$, and $\Cr$ be the subset of vertices of $C$ such that $C_v = C$.
% \begin{itemize}
% 	\item $\prt{\Cr} \subseteq C$ and $\cdsc{\Cr} \cap \casc{C} = \Cr$.
% 	\item Hence, $\mu_C = \mu_{C\backslash \Cr} \prod_{v \in \Cr}q_{v|\prt{v}}$ where $q_{v|\prt{v}} = \frac{\sum_{x_{C\backslash \fa{v}}}\tau_C}{\sum_{x_{C\backslash \prt{v}}}\tau_C}$
% 	\item $\mu_{1,\ldots,i} = \prod_{v \in C_1\cup \ldots C_i} q_{v|\prt{v}}$ follows by iteration along a topological order $C_1,\ldots,C_k$ on $\calG$.
% \end{itemize}
\end{proof}

\begin{proof}[Proof of Proposition~\ref{prop:algcorrect}]
Algorithm~\ref{alg:buildRJT} obviously converges given that it has only a finite number of iterations. If $G$ is not connected, the algorithm is equivalent to its separate application on each of the connected components, which each yield a tree. W.l.o.g., we prove properties of the algorithm under the assumption that $G$ is connected.
To simplify notations, we denote $C'_v$ by $C_v$, and check that it indeed corresponds to the root node of $v$.

We first prove that $\preceq$ is a topological order on $\calG$. First, remark that $(u \in C_v) \Rightarrow (u \preceq v)$. Indeed, if $u \in C_v,$ then either Step~\ref{step:Cvdef} of the algorithm ensures that $u \in \fa{v}$ and $u \preceq v$ or $u \notin \fa{v}$ and there exists $x$ such that  $u \in C_x$ and $C_v \rightarrow C_x$.
But by Step~\ref{step:arc} of Algorithm~\ref{alg:buildRJT}, the fact that $C_v \rightarrow C_x$ entails that $v$ is the maximal element of $C_x\backslash \{x\}$ for the topological order, so that $u \prec v$.
Furthermore, Step~\ref{step:arc} ensures that $(C_u,C_v) \in \calA$ implies $u \in C_v$.
We deduce from the previous result that $(C_u,C_v) \in \calA$ implies $u \preceq v$, and $\preceq$ is a topological order on $\calG$.
% It then immediately follows from Step~\ref{step:Cvdef} that $u \prec v$ implies $v \notin C_u$, and $C_v$ is the root node of $v$.
% Since for all $v$, the parent of the node  $C_v$ is a node $C_y$ with $y \in C_v$, the previous result guarantees that $C_y$ is constructed by the algorithm only after all the children of $y$ have been processed and listed as such in $\calA'$. So that, at the end of the algorithm, we indeed have $C_v =  \fa{v} \cup \bigcup_{C_u \in \cld[\calG]{C_v}} C_u \backslash \{u\}$ and $\preceq$ is a topological order for $\calG$.

Then we show that \eqref{eq:minimality_rjt} holds. We first show  that $(u \in C_v) \Rightarrow C_u \rcarrow C_v$ and $u\in C'$ for any $C'$ on path $C_u\mcarrow C_v$. Either $u=v$ and this is obvious, or $u \in \prt[\calG]{C_v}$; and by recursion either $C_u \rcarrow C_v$ or $u \in C_r$ with $C_r$ the root of the tree which is also the first element in the topological order. But, unless $u=r$, this is excluded given that $u \in C_r$ implies $u \preceq r$. Note that this shows that $C_u$ is indeed the unique minimal element of the set $\{C \colon u \in C\}$ for the partial order defined by the arcs of the tree.
To show the first part of \eqref{eq:minimality_rjt}, we just need to note that either $u \in \fa{v}$ and the result holds, or there must exist $x$ such that $C_v \rightarrow C_x$ and $u \in C_x$ and by recursion, there exists $w$ such that $C_v \rcarrow C_w$ and $u \in \fa{w}$.

Finally, we prove that we have constructed an \rjt. Indeed, if two vertices $C_v$ and $C_{v'}$ contain $u$ then
% $\calG$ contains the trail $C_v \lcarrow C_u \rcarrow C_{v'}$, and all vertices on this  trail must contain $u$ because of the update rule for cluster vertices in Algorithm~\ref{alg:buildRJT}.
since $\calG$ is singly connected, the trail connecting $C_v$ and $C_{v'}$ must be composed of vertices on the paths $C_v \lcarrow C_u$ and $C_u \rcarrow C_{v'}$, and we have shown in the previous paragraph that that $u$ belongs to any $C'$ on $C_v \lcarrow C_u$ and $C_u \rcarrow C_{v'}$, and so the running intersection property holds. Finally, property (ii) of Definition~\ref{def:rjt} must holds because the fact that $C_u$ is minimal among all cluster vertices containing $u$ together with the running intersection property entails that the cluster vertices containing $u$ are indeed a subtree of $\calG$ with root $C_u$.
\end{proof}

Proposition~\ref{prop:algcorrect} provides a justification for Algorithm~\ref{alg:buildRJT}, but it characterizes the content of the cluster vertices based on the topology of the obtained \rjt,
which is itself produced by the algorithm (note that the composition of cluster vertices depends only on $\preceq$ via the partial order of the tree). The cluster nodes of any \rjt and those produced by Algorithm~\ref{alg:buildRJT} admit however more technical characterizations using only $\preceq$ and the information in $G$, which we present next. These characterizations will be useful in \Cref{sub:proofs}.
For each vertex $v$ in $V$, let $$T_{\succeq v}=\{w \in V_{\succeq v} \mid \text{ there is a $v$-$w$ trail in } V_{\succeq v}\}.$$

\begin{prop}\label{prop:rjtInclusions}
Let $\calG = (\calV,\calA)$ be an \rjt satisfying $\offspring{C_v} = \{v\}$, and $\preceq$ be a topological order on $\calG$. Then $\preceq$ induces a topological order on $G$ and
\begin{subequations}\label{eq:rjtInclusions}
\begin{align}
	% \dsc[\calG]{C_v} &\supseteq \{C_w \colon w \succ v  \text{ and there is a $v$-$w$ trail in }V_{\succeq v}\}
	w\in T_{\succeq v}& \implies C_v \rcarrow C_w,
	\label{eq:dscCvContains} \\
	% C_v &\supseteq \{u \in V_{\preceq v}\colon \text{there is a $u$-$v$ trail $P$ with } P\backslash \{u\}\subseteq V_{\succeq v}\}
	\left.\begin{array}{r}
	\cld{u} \cap T_{\succeq v} \neq \emptyset \\
	\text{ and }  u \preceq v
	\end{array}\right\}
	& \implies u \in C_v.
	\label{eq:CvContains}
\end{align}
\end{subequations}
% \comAP{Modified following Guillaume's suggestion}
% \begin{subequations}\label{eq:rjtInclusions}
% \begin{align}
% 	C_v &\supseteq \{u \in V_{\preceq v}\colon \text{there is a $u$-$v$ trail $P$ with } P\backslash \{u\}\subseteq V_{\succeq v}\} \label{eq:CvContains} \\
% 	\dsc[\calG]{C_v} &\supseteq \{C_w \colon w \succ v  \text{ and there is a $v$-$w$ trail in }V_{\succeq v}\} \label{eq:dscCvContains}
% \end{align}
% \end{subequations}
% \inlGO{? How about modifying the statement of the proposition as follows:
% With $T_{\succeq v}=\{w \in V_{\succeq v} \mid \text{ there is a v-w trail in } V_{\succeq v}\}$, we have
% \begin{subequations}%\label{eq:rjtInclusions}
% \begin{align}
% 	\cdsc[\calG]{C_v} &\supseteq \{C_w \colon w \in T_{\succeq v}\} \label{eq:new_dscCvContains} \\
% 	C_v &\supseteq \{u\preceq v \colon \exists w \in T_{\succeq v}, \: u \in \fa{w}\} \label{eq:new_CvContains}
% \end{align}
% \end{subequations}
% ? Since the first inclusion \eqref{eq:new_dscCvContains} is equivalent to $(w \in T_{\succeq v}) \Rightarrow (C_v \rcarrow C_w)$, it would make the connexion with \eqref{eq:rjt_prop} the vertices that satisfy the RHS of \eqref{eq:new_CvContains} are obviously included in the ones that satisfy the LHS of \eqref{eq:rjt_prop}, so that \eqref{eq:rjt_prop} implies immediately \eqref{eq:new_CvContains}. I wrote the proof of Proposition \ref{prop:rjtBuildAlgorithm} with these notations but I can easily revert.
% }
\end{prop}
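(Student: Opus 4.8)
The plan is to unpack the definition of $T_{\succeq v}$ and use repeatedly the two structural facts already available: the running intersection property of the junction tree, and the statement of Proposition~\ref{prop:rjtProperties} (in particular part~\ref{prop:rjtProperties2}, which says that if two vertices have a common ascendant--or more precisely $\cdsc{u}\cap\cdsc{v}\neq\emptyset$, then $C_u$ and $C_v$ are comparable in the rooted tree $\calG$). First I would note that since $\preceq$ is a topological order on $\calG$ and $\fa{v}\subseteq C_v$ with $\offspring{C_v}=\{v\}$, the implication $(u\in C_v)\Rightarrow(u\preceq v)$ carries over exactly as in the proof of Proposition~\ref{prop:algcorrect}, so $\preceq$ restricted to $V$ is a topological order on $G$; this is the easy preliminary.

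For~\eqref{eq:dscCvContains}, I would argue by induction on the length of the $v$-$w$ trail inside $V_{\succeq v}$ witnessing $w\in T_{\succeq v}$. The base case $w=v$ is trivial. For the inductive step, take such a trail $v=v_1\rightleftharpoons\cdots\rightleftharpoons v_k=w$ with all $v_i\succeq v$; by induction $C_v\rcarrow C_{v_{k-1}}$. Now $v_{k-1}\rightleftharpoons v_k$ means either $v_{k-1}\in\prt{v_k}$ or $v_k\in\prt{v_{k-1}}$. In the first case $v_{k-1}\in\fa{v_k}\subseteq C_{v_k}$, and since $v_{k-1}\in C_{v_{k-1}}$ as well, running intersection forces $v_{k-1}$ to lie on the whole $C_{v_{k-1}}$--$C_{v_k}$ path; combined with the rooted-tree structure and $v_{k-1}\preceq v_k$ (which holds because $v_{k-1}\in\casc{v_k}$... here I must be careful, $\succeq v$ does not by itself order $v_{k-1}$ and $v_k$) one deduces $C_{v_{k-1}}\rcarrow C_{v_k}$, hence $C_v\rcarrow C_w$. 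The symmetric case $v_k\in\prt{v_{k-1}}$ gives $v_k\in\fa{v_{k-1}}\subseteq C_{v_{k-1}}$; then $\cdsc{v_k}\cap\cdsc{v_{k-1}}\ni v_{k-1}$ is nonempty, so by Proposition~\ref{prop:rjtProperties}\ref{prop:rjtProperties2} the nodes $C_{v_k}$ and $C_{v_{k-1}}$ are comparable, and since $v_k\in C_{v_{k-1}}$ the root $C_{v_k}$ of the subtree of nodes containing $v_k$ must be an ascendant of $C_{v_{k-1}}$, giving $C_{v_k}\rcarrow C_{v_{k-1}}$; but we need the reverse direction $C_v\rcarrow C_{v_k}$, which follows because $C_v\rcarrow C_{v_{k-1}}$ and $C_{v_k}$ lies on the $C_v$--$C_{v_{k-1}}$ path (as $v_k\in C_{v_{k-1}}$ and $v_k\in C_{v}$? no)---so in this branch I would instead invoke that $C_v\rcarrow C_{v_{k-1}}$ and $C_{v_k}$ is comparable to both, locating it on the path.

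For~\eqref{eq:CvContains}, suppose $u\preceq v$ and there is $x\in\cld{u}\cap T_{\succeq v}$. By~\eqref{eq:dscCvContains} applied to $x$ we get $C_v\rcarrow C_x$. Since $u\in\prt{x}\subseteq\fa{x}\subseteq C_x$ and $u\in C_u$, running intersection says $u$ belongs to every node on the $C_u$--$C_x$ path; it remains to show $C_v$ lies on that path, i.e. that $C_u\rcarrow C_v\rcarrow C_x$. We have $C_v\rcarrow C_x$ and $u\in C_x$, so $C_u$ (the root of the subtree of nodes containing $u$) is an ascendant of $C_x$; thus $C_u$ and $C_v$ are both ascendants of $C_x$ in the rooted tree, hence comparable, and I must rule out $C_v\rcarrow C_u$ strictly: that would force (by the path characterization again) $v$ to precede $u$ whenever... more cleanly, $C_v\rcarrow C_u$ would give, along with $u\preceq v$ and the topological-order property $(w\in C_{v'})\Rightarrow(w\preceq v')$ applied suitably, a contradiction unless $C_u\rcarrow C_v$. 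Once $C_u\rcarrow C_v\rcarrow C_x$ is established, $u\in C_v$ by running intersection, completing the proof.

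The main obstacle I anticipate is the bookkeeping in~\eqref{eq:dscCvContains}: a \emph{trail} in $V_{\succeq v}$ can traverse arcs in both directions, so the induction must handle the ``$v_k$ is a parent of $v_{k-1}$'' step where the arc points backward, and there one cannot directly say $C_{v_{k-1}}\rcarrow C_{v_k}$; the resolution is to use Proposition~\ref{prop:rjtProperties}\ref{prop:rjtProperties2} to get comparability of $C_{v_k}$ with the already-reached $C_{v_{k-1}}$ and then pin down its position relative to $C_v$ using that $C_v\rcarrow C_{v_{k-1}}$ and the rooted-tree structure. Everything else is a routine application of running intersection plus the fact that $\preceq$ orders both $G$ and $\calG$.
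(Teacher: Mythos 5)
Your proposal is correct and takes essentially the same route as the paper's proof: the paper decomposes the trail into directed segments meeting at v-structures and diverging nodes and orients the corresponding cluster paths by combining comparability (Proposition~\ref{prop:rjtProperties}) with the fact that $\preceq$ is topological on $\calG$, which is exactly what your arc-by-arc induction does for \eqref{eq:dscCvContains}, and its proof of \eqref{eq:CvContains} likewise reduces to $C_u \rcarrow C_v \rcarrow C_w$ plus the running intersection property (via \eqref{eq:rjt_prop}). The only step you leave implicit---pinning the direction between $C_v$ and $C_{v_k}$ in the backward-arc case---is settled by the same order argument you spell out for \eqref{eq:CvContains}: a strict path $C_{v_k}\rcarrow C_v$ would force $v_k \prec v$ because $\preceq$ is topological on $\calG$, contradicting $v_k \in V_{\succeq v}$.
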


% \begin{rem}
% Note that conversely, the closed formula \eqref{eq:rjtInclusions} defines an \rjt that can be computed with Algorithm~\ref{alg:buildRJT}.
% \end{rem}

\begin{proof}%[Proof of \Cref{prop:rjtInclusions}]
Let $\calG = (\calV,\calA)$ be an \rjt satisfying $\offspring{C_v} = \{v\}$, and $\preceq$ be a topological order on $\calG$.
Property~\ref{prop:rjtOrderCompatible} in Proposition~\ref{prop:rjtProperties} ensures that $\preceq$ induces a topological order on $G$.

%We start by proving \eqref{eq:dscCvContains}. Let $v$ and $w$ be vertices such that $w \succ v$ and that there is a $v$-$w$ trail $Q$ in $V_{\succeq v}$. Let $s_0, \ldots, s_k$ be the nodes where $Q$ has a v-structure and $t_1,\ldots,t_k$ the nodes with diverging arcs in $Q$. Note that, since the trail is included in $V_{\succeq v}$, the first nodes of the trail have to be descendants of $v$ so that the trail takes the form $v \rcarrow s_0 \lcarrow t_1\rcarrow$ $s_1 \ldots t_k \rcarrow s_k \lcarrow w,$ where possibly the last arc is not present and $s_k=w.$   Then, given that $v\prec s_0$, and that $\preceq$ is topological for $\calG$, Proposition~\ref{prop:rjtProperties}.2 implies that ${C_v}\rcarrow {C_{s_0}}$.
%But by the same argument, Proposition~\ref{prop:rjtProperties}.2 implies ${C_{t_1}} \rcarrow {C_{s_0}}$, and thus Proposition~\ref{prop:rjtProperties}.1 gives that ${C_v}$ is on the path from the root to ${C_{t_1}}$ in $\calG$. By induction on $i$, we obtain that ${C_v}$ is on the path from the root to ${C_{t_i}}$ and ${C_{s_i}}$ in $\calG$, and then that ${C_v}$ is on the path from the root to ${C_{w}}$ in $\calG$, which yields Equation~\eqref{eq:dscCvContains}.

We start by proving \eqref{eq:dscCvContains}. Let $v$ and $w$ be vertices such that $w \succ v$ and that there is a $v$-$w$ trail $Q$ in $V_{\succeq v}$. Let $s_0, \ldots, s_k$ be the nodes where $Q$ has a v-structure and $t_1,\ldots,t_k$ the nodes with diverging arcs in $Q$. Note that, since the trail is included in $V_{\succeq v}$, the first nodes of the trail have to be immediate descendants of $v$ in $G$ so that the trail takes the form $v \rcarrow s_0 \lcarrow t_1\rcarrow$ $s_1 \ldots t_k \rcarrow s_k \lcarrow w,$ where possibly $s_k=w$ and the last arc is not present. Then, given that $v\prec s_0$, and that $\preceq$ is topological for $\calG$, Proposition~\ref{prop:rjtProperties}.2 implies that ${C_v}\rcarrow {C_{s_0}}$.
But by the same argument, Property \ref{prop:rjtProperties2} in Proposition~\ref{prop:rjtProperties} implies ${C_{t_1}} \rcarrow {C_{s_0}}$, but since $\calG$ is a tree and $v \prec t_1$, we must have ${C_v}\rcarrow {C_{t_1}}\rcarrow {C_{s_1}}.$ By induction on $i$, we have ${C_v}\rcarrow {C_{s_i}}$ and thus ${C_v}\rcarrow {C_{w}},$ which shows \Cref{eq:dscCvContains}.

%We now prove \eqref{eq:CvContains}. Let $u$ and $v$ be two vertices such that $u \preceq v$ and there is a $u$-$v$ trail $P$ with  $P\backslash \{u\}\subseteq V_{\succeq v}$. Let $w$ be the vertex right after $u$ on $P$. Vertex $w$ is such that $w \succ v$ there is a $v$-$w$ trail in $V_{\succeq v}$. Hence, Equation~\eqref{eq:dscCvContains} gives $C_w \in \dsc[\calG]{C_v}$. Since $C_u \preceq C_v$, the trail from $C_u$ to $C_w$ in $\calG$ intersects $C_v$. Since $u \in C_u$, and $u \in \fa{v}$ implies $u \in C_v$, the running intersection property yields $u \in C_v$, which concludes the proof of \eqref{eq:CvContains}.

We now prove \eqref{eq:CvContains}. Let $u$ and $v$ be two vertices such that $u \preceq v$ and there is a $u$-$v$ trail $P$ with  $P\backslash \{u\}\subseteq V_{\succeq v}$. Let $w$ be the vertex right after $u$ on $P$. We have $u \in \fa{w},$ $w \succeq v$ and there is a $v$-$w$ trail in $V_{\succeq v}$, which implies $C_v \rcarrow C_w$ by \eqref{eq:dscCvContains}. But, since $u \preceq v$, the $u$-$v$ trail is also in $V_{\succeq u}$, which similarly shows that $C_u \rcarrow C_v$. So by \eqref{eq:rjt_prop} we have proved~\eqref{eq:CvContains}.
\end{proof}

\begin{prop}\label{prop:rjtBuildAlgorithm}
The graph $\calG = (\calV,\calA)$ produced by \Cref{alg:buildRJT} is the unique \rjt satisfying $\offspring{C_v} = \{v\}$ such that the topological order $\preceq$ on $G$ taken as input of \Cref{alg:buildRJT} induces a topological order on $\calG$ and the implications in \eqref{eq:rjtInclusions} are equivalences.
% \begin{subequations}\label{eq:rjtAlgCharacterization}
% \begin{align}
% 	C_v &= \{u \in V_{\preceq v}\colon \text{there is a $u$-$v$ trail $P$ with } P\backslash \{u\}\subseteq V_{\succeq v}\}\\
% 	\dsc[\calG]{C_v} &= \{C_w \colon w \succ v  \text{ and there is a $v$-$w$ trail in }V_{\succeq v}\}
% \end{align}
% \end{subequations}
\end{prop}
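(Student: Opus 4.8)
The plan is to show, first, that the \rjt produced by Algorithm~\ref{alg:buildRJT} has the three listed properties, and then that these properties determine it uniquely. By Proposition~\ref{prop:algcorrect}, the output $\calG$ is an \rjt with $\offspring{C_v}=\{v\}$, admitting $\preceq$ as a topological order, satisfying $(u\in C_v)\Rightarrow(u\preceq v)$ and the minimality property~\eqref{eq:minimality_rjt}; in particular the offspring of clusters strictly increases along arcs of $\calG$, so $C_v\rcarrow C_w$ implies $v\preceq w$. By Proposition~\ref{prop:rjtInclusions}, the two forward implications in~\eqref{eq:rjtInclusions} already hold for $\calG$, so the existence part reduces to proving their converses, and the uniqueness part to showing that any \rjt with these properties coincides with $\calG$.

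I would establish the converse of~\eqref{eq:dscCvContains}, namely $C_v\rcarrow C_w \Rightarrow w\in T_{\succeq v}$, by strong induction on $v$ taken in reverse topological order for $\preceq$. The case $w=v$ is immediate; otherwise the $\calG$-path starts $C_v\to C_x\rcarrow C_w$ with $x\succ v$. Since the arc $C_v\to C_x$ gives $v\in C_x$, the minimality property~\eqref{eq:minimality_rjt} provides a vertex $y$ with $C_x\rcarrow C_y$ and $v\in\fa{y}$; one must have $y\neq v$ (else $C_v\rcarrow C_x\rcarrow C_v$ would force $C_x=C_v$), hence $v\to y$ in $G$. The induction hypothesis applied at $x$ gives $y\in T_{\succeq x}$ and $w\in T_{\succeq x}$, i.e.\ an $x$-$y$ trail and an $x$-$w$ trail, both inside $V_{\succeq x}\subseteq V_{\succeq v}$. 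Prepending the arc $v\to y$ to the reversed $x$-$y$ trail produces a $v$-$x$ trail inside $V_{\succeq v}$, and concatenating it with the $x$-$w$ trail produces a $v$-$w$ trail inside $V_{\succeq v}$; thus $w\in T_{\succeq v}$. The converse of~\eqref{eq:CvContains} then follows quickly: if $u\in C_v$ with $u\neq v$, minimality yields $w$ with $C_v\rcarrow C_w$ and $u\in\fa{w}$, and necessarily $w\neq u$ (otherwise $C_u=C_v$, i.e.\ $u=v$), so $u\to w$; combining with the converse of~\eqref{eq:dscCvContains} just proved gives $w\in\cld{u}\cap T_{\succeq v}$, while $u\preceq v$ is known from Proposition~\ref{prop:algcorrect}. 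Hence both implications in~\eqref{eq:rjtInclusions} are equivalences for $\calG$.

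The uniqueness argument goes as follows: given any \rjt $\calG'$ with $\offspring{C'_v}=\{v\}$ on which $\preceq$ is topological and for which the implications in~\eqref{eq:rjtInclusions} are equivalences, the (always valid) forward implication in~\eqref{eq:CvContains}, its assumed converse, and $\fa{v}\subseteq C'_v$ together yield $C'_v=\{v\}\cup\{u\colon u\preceq v \text{ and } \cld{u}\cap T_{\succeq v}\neq\emptyset\}$, a description involving only $G$ and $\preceq$; applying the same description to $\calG$ (which we showed satisfies the same equivalences) gives $C'_v=C_v$ for all $v$, so $\calG$ and $\calG'$ have the same, identically labelled, node set. Moreover, the equivalence in~\eqref{eq:dscCvContains} states that in either tree $C_v\rcarrow C_w$ holds precisely when $w\in T_{\succeq v}$, so $\calG$ and $\calG'$ have the same reachability relation; since an arborescence is recovered from its reachability relation (an arc being exactly a covering pair), we conclude $\calG'=\calG$.

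The delicate part is the induction in the second paragraph: one must invoke~\eqref{eq:minimality_rjt} at the correct cluster $C_x$, verify the non-degeneracy facts $y\neq v$ and $w\neq u$, and check that the stitched sequences of vertices are genuine trails contained in $V_{\succeq v}$ — this relies on the paper's convention that a trail may repeat vertices and on the strict increase of offspring along arcs (which guarantees $x\succ v$, so that the induction hypothesis is available). One small caveat should be recorded: when $u=v$ the right-hand side of~\eqref{eq:CvContains} need not hold literally (e.g.\ for a sink $v$), so the equivalence in~\eqref{eq:CvContains} is meant for $u\in\check{C}_v$, the membership $v\in C_v$ being automatic.
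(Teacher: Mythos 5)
Your proof is correct and follows essentially the same route as the paper's: reduce via Propositions~\ref{prop:algcorrect} and~\ref{prop:rjtInclusions} to the converse implications, then establish them by a backward induction along $\preceq$ (equivalently, from the leaves of $\calG$ upward) by stitching a $v$-$x$ trail obtained from $v\in C_x$ onto an $x$-$w$ trail in $V_{\succeq x}$. The only differences are organizational — you induct on the converse of~\eqref{eq:dscCvContains} alone and recover the converse of~\eqref{eq:CvContains} afterwards via~\eqref{eq:minimality_rjt}, whereas the paper runs a simultaneous induction on both — and you usefully make explicit the uniqueness argument (and the $u=v$ caveat at sinks) that the paper leaves implicit.
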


\begin{proof}%[Proof of \Cref{prop:rjtBuildAlgorithm}]
Note that some visual elements of the proof are given in \Cref{fig:rjtAlgCharacterization}.
It is sufficient to prove the following inclusions
\begin{subequations}%\label{eq:rjtInclusions}
\begin{align}
	\cdsc[\calG]{C_v} &\subseteq \{C_w \colon w \in T_{\succeq v}\}, \label{eq:revdscCvContains} \\
	C_v &\subseteq \{u\preceq v \colon \exists w \in T_{\succeq v}, \: u \in \fa{w}\}. \label{eq:revCvContains}
\end{align}
\end{subequations}
Indeed, note that by Proposition~\ref{prop:algcorrect}, the obtained tree is an \rjt so that, by Proposition~\ref{prop:rjtInclusions}, the reverse inequalities hold.

We prove the result by backward induction on \eqref{eq:revCvContains} and \eqref{eq:revdscCvContains}. For a leaf $C_v$ of $\calG,\: \cdsc[\calG]{C_v}=\{C_v\}$ so that \eqref{eq:revdscCvContains} holds trivially
and $C_v=\fa{v}$ so that \eqref{eq:revCvContains} holds because $u \in \fa{v}$ implies $u \preceq v$.
Then, assume the induction hypothesis holds for all children of a node $C_v$.

We first show \eqref{eq:revdscCvContains} for $C_v$ , i.e.\ that $(C_v \rcarrow C_w) \Rightarrow (w \in T_{\succeq v})$ (see Figure~\ref{fig:rjtAlgCharacterization}). Let $C_x$ be the child of $C_v$ on the path $C_v \rcarrow C_w$. By Proposition \ref{prop:algcorrect},
we have $v \prec x$, so that $V_{\succeq x} \subset V_{\succeq v} $.
Then, using the induction hypothesis, by \eqref{eq:revCvContains}, $(v \in C_x)$ implies that there is a $v$-$x$ trail in $V_{\succeq v}$, and by ~\eqref{eq:revdscCvContains}, $C_x \rcarrow C_w$ implies there is a trail $x$-$w$ in $V_{\succeq x}$, so there is a $v$-$w$ trail in  $v \prec x$ in $V_{\succeq v}$, which shows the result.

We then show \eqref{eq:revCvContains} for $C_v$ (see Figure~\ref{fig:rjtAlgCharacterization}). Indeed if $u \in C_v$, either $u \in \fa{v}$ and $u$ is in the RHS of \eqref{eq:revCvContains}, or there exists a child of $C_v$, say $C_x$ such that $u \in C_x$ and $u \prec v$, because the algorithm imposes $v=\max_{\preceq} (C_x \backslash\{x\})$. Since $C_v \rcarrow C_x$ there exists a path $v$-$x$ in $V_{\succeq v}$, and using induction, by \eqref{eq:revCvContains}, $(u \in C_x)$ implies that
$\exists w$ such that $u \in \fa{w}$ and there exists a trail $w$-$x$ in $T_{\succeq v}$. But we have shown in Proposition \ref{prop:algcorrect} that $(v \in C_x)\Rightarrow (v \preceq x)$, so $T_{\succeq x}\subset T_{\succeq v}$ and we have shown that there exists a $v$-$w$ trail in $T_{\succeq v}$ with $u \preceq v$ and $u \in \fa{w}$, which shows the result.
\end{proof}

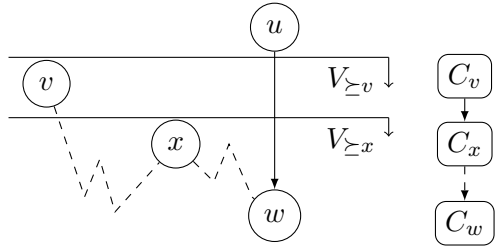
\begin{figure}
\begin{center}

% a.
% \begin{tikzpicture}
% \node[sta] (v) at (0,3) {$v$};
% \node[sta] (x) at (0,1.5) {};
% \node[sta] (u) at (2,2) {$u$};
% \node[sta] (w) at (4,0.5) {$w$};
% \draw[arc] (v) -- (x);
% \draw[dashed] (x) -- ++(0.5, -1) -- ++ (0.2,0.5) -- ++ (0.3,-0.7) -- (u);
% \draw[dashed] (u) -- ++(0.5,-1.5) -- ++ (0.5,1) -- (w);
% \draw (-0.5,2.4) -- (4.5,2.4);
% \draw[->] (4.5,2.4) -- (4.5,2);
% \node[below] at (4,2.4) {$V_{\succeq u}$};
% \node[draw,rounded corners] (Cv) at (5.5,3) {$C_v$};
% \node[draw,rounded corners] (Cu) at (5.5,2.0) {$C_u$};
% \node[draw,rounded corners] (Cw) at (5.5,0.5) {$C_w$};
% \draw[arc] (Cv) -- (Cu);
% \draw[dashed, arc] (Cu) -- (Cw);
% \end{tikzpicture}
% \quad b.
\begin{tikzpicture}
\node[sta] (u) at (3,3) {$u$};
\node[sta] (w) at (3,0.5) {$w$};
%\node[sta] at (1,2.35) {};
\node[sta] (x) at (1.7,1.45) {$x$};
\node[sta] (v) at (0,2.25) {$v$};
\draw[arc] (u) -- (w);
\draw[dashed] (x) -- ++(0.5, -0.5) -- ++ (0.2,+0.5) -- ++ (0.3,-0.7) -- (w);
\draw[dashed] (v) -- ++(0.5,-1.5) -- ++ (0.2,0.5) --++ (0.2,-0.7)-- (x);
\draw (-0.5,2.60) -- (4.5,2.60);
\draw[->] (4.5,2.60) -- (4.5,2.20);
\draw (-0.5,1.8) -- (4.5,1.8);
\draw[->] (4.5,1.8) -- (4.5,1.55);
\node[below] at (4,2.60) {$V_{\succeq v}$};
\node[below] at (4,1.8) {$V_{\succeq x}$};
\node[draw,rounded corners] (Cv) at (5.5,2.35) {$C_v$};
\node[draw,rounded corners] (Cx) at (5.5,1.45) {$C_x$};
\node[draw,rounded corners] (Cw) at (5.5,0.4) {$C_w$};
\draw[ arc] (Cv) -- (Cx);
\draw[dashed, arc] (Cx) -- (Cw);
\end{tikzpicture}
\end{center}
\caption{Illustration of the proof of Proposition~\ref{prop:rjtBuildAlgorithm}. Plain arcs represent arcs, dashed line trails.}
\label{fig:rjtAlgCharacterization}
\end{figure}

 \section{Proofs of \Cref{sec:soluble_}}
 \label{sub:proofs}
 %%%%%%%%%%%%%%%%%%%%%%%%%%%%%%%%%%%%%%%%%%%%%%%%%%%%%%%%%%%%%%%%%%%%%%%%%%%%%%%
For any set $C$ and binary relation $R$, we denote $C_{\mathbin{\mathrm{R}}v}$ the set of vertices $u$ in $C$ such that $u\mathbin{\mathrm{R}}v$.
The following lemma will be useful in the proof of Lemma~\ref{lem:SolubleEqualRJT}. Let $H$ denote the relevance graph of $G$.

\begin{lem}\label{lem:topoOrderSoluble}
 In general, $u \in \dsc{v} \Rightarrow (v,u) \in H$. But, when $G$ is soluble, then $u \in \dsc{v} \Leftrightarrow (v,u) \in H.$
%\jo{we used $G'$ when we introduced this graph. why changing? way too many letters in this papers already.}
\end{lem}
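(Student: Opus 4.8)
The plan is to work directly from the definition of s-reachability, Equation~\eqref{eq:def_soluble}, via the augmented graph $G^{\dagger}$. First I would prove the ``in general'' direction: suppose $u \in \dsc{v}$. I need to show $(v,u) \in H$, i.e.\ $\vartheta_u$ is \emph{not} d-separated from $\dsc{v}$ given $\fa{v}$ in $G^{\dagger}$. Consider the trail $\vartheta_u \rightarrow u$ in $G^{\dagger}$ (this arc exists by construction of $G^{\dagger}$, since $u \in \Va$). Since $u \in \dsc{v}$, we have $u \in \dsc{v}$, so $u$ itself is a vertex of $\dsc{v}$. Thus the length-one trail $\vartheta_u \rightarrow u$ connects $\vartheta_u$ to the set $\dsc{v}$; it has no v-structure, and I must check that $u \notin \fa{v}$: this holds because $u \in \dsc{v}$ and $G$ is acyclic, so $u$ cannot be $v$ nor a parent of $v$. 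Hence this trail is active given $\fa{v}$, so $\vartheta_u \nperp_{G^{\dagger}} \dsc{v} \mid \fa{v}$, i.e.\ $(v,u) \in H$.

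For the converse under the solubility assumption, I would argue by contraposition: suppose $u \notin \dsc{v}$; I must show $(v,u) \notin H$, i.e.\ $\vartheta_u \perp_{G^{\dagger}} \dsc{v} \mid \fa{v}$. Here the solubility hypothesis should be used through the equivalent characterization (Proposition cited just before, Theorem 23.5 of \citep{koller2009probabilistic}): the relevance graph $H$ is acyclic. The key observation is that if we had both $(v,u) \in H$ and (by the first part, since one can produce an example where $v \in \dsc{u}$ is forced, or more carefully) a cycle in $H$, we would contradict acyclicity. More precisely, I expect the clean argument to be: when $G$ is soluble, the relevance graph $H$ is acyclic, and one shows that the only arcs of $H$ are those forced by $\dsc{\cdot}$, because any ``extra'' s-reachability arc $(v,u)$ with $u \notin \dsc{v}$ would, combined with the guaranteed arcs along a descendant chain, create a directed cycle in $H$. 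Indeed, if $(v,u)\in H$ with $u\notin\dsc{v}$, then there is an active trail from $\vartheta_u$ to $\dsc{v}$ given $\fa{v}$ that is not the trivial descendant trail; tracing this trail and using the structure of $G^{\dagger}$ together with the fact that every decision node's $\vartheta$ points only into that node, one extracts a decision vertex $w$ with $w \in \dsc{v}$ (hence $(v,w)\in H$ by the first part) and $(w,u) \in H$, iterating to build a cycle — contradicting acyclicity of $H$.

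\textbf{Main obstacle.} The hard part will be the converse direction: turning ``$(v,u)\in H$ with $u\notin\dsc{v}$'' into an actual directed cycle in $H$. This requires carefully analyzing the shape of an active trail from $\vartheta_u$ to $\dsc{v}$ in $G^{\dagger}$ conditioned on $\fa{v}$ — in particular understanding where its v-structures sit and which decision nodes it passes through — and then invoking the first implication repeatedly to close the cycle. I would handle this by a minimal-counterexample / induction argument on the length of such a trail, or by directly citing the standard relevance-graph machinery from \citep{koller2009probabilistic} if the bookkeeping becomes heavy; the assumption $\dsc{v}\cap\Vl\neq\emptyset$ for all $v\in\Va$ (made at the start of \Cref{sec:soluble_}) should be used to ensure the two formulations of s-reachability coincide and that descendant sets are nonempty wherever needed.
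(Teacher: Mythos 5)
Your first implication is fine and is essentially the paper's own observation: the one-arc trail $\vartheta_u \rightarrow u$ ends in $\dsc{v}$, has no v-structure, and meets $\fa{v}$ nowhere (acyclicity of $G$ rules out $u \in \fa{v}$), so it is active given $\fa{v}$ and $(v,u)\in H$.

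The converse, however, is where the actual content of the lemma lies, and your proposal does not contain an argument for it: it is a plan ("trace the trail, extract a decision vertex $w$ with $(v,w),(w,u)\in H$, iterate to build a cycle, or else cite the standard machinery"), and the mechanism you sketch is not the one that works. The active trail from $\vartheta_u$ to $\dsc{v}$ given $\fa{v}$ need not pass through any decision vertex other than $u$, so there is in general no intermediate $w\in\Va\cap\dsc{v}$ to extract, and even if there were, the arcs $(v,w)$ and $(w,u)$ together with $(v,u)$ do not close a directed cycle — you would still need an arc back into $v$, which is exactly the missing step. The paper's proof supplies that step directly, in two moves. First, $(v,u)\in H$ forces $u$ and $v$ to have a common descendant: if the active trail $T$ from $\vartheta_u$ to some $w\in\dsc{v}$ is a directed path, then $w\in\dsc{u}\cap\dsc{v}$; otherwise the v-structure of $T$ closest to $\vartheta_u$ sits at a descendant $x$ of $u$ (the initial segment is directed, starting with $\vartheta_u\rightarrow u$), and activation requires $x$ or one of its descendants to lie in $\fa{v}$, which forces $v\in\dsc{u}$. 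Second, if moreover $u\notin\dsc{v}$, the directed trail $\vartheta_v\rightarrow v \rcarrow w$ down to a common descendant $w$ is active given $\fa{u}$: any vertex of that trail lying in $\fa{u}$ would be an ascendant-or-equal of $u$ that is also in $\cdsc{v}$, putting $u$ in $\dsc{v}$, a contradiction. Hence $(u,v)\in H$ as well, and $H$ contains the two-cycle $v\rightarrow u\rightarrow v$, contradicting acyclicity (solubility). This reverse-arc construction — s-reachability of $u$ from $v$ without descendance implies s-reachability of $v$ from $u$ — is the key idea, and it is absent from your proposal; falling back on "citing the relevance-graph machinery" would beg precisely this point.
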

\begin{proof}
% \comAP{Axel relit la preuve}
Assume that $u$ is s-reachable from $v$, that is $(v,u)$ is an arc in $H$. We first show that this implies that $u$ and $v$ have descendants in common. Indeed, by definition of  s-reachability, this means that there exist $w \in \dsc{v}$ and an active trail $T$ from $\vartheta_u$ to $w$. Either, $T$ is a directed path and $w$ is also a descendant of $u$ or $T$ must have a v-structure. In the latter case, let $x$ be the node with the v-structure closest to $\vartheta_u$ on $T$; since the trail is active, we must have $x \in \fa{v}$ but since $x$ is a descendant of $u$, in that case, $v$ must be a descendant of $u$. In both cases considered $u$ and $v$ have descendants in common.
%Indeed, suppose it is not the case. Let $T$ be an active trail between $\dsc{v}$ and $\vartheta_u$ given $\fa{v}$.
% and consider a trail  from $\dsc{v}$ to $\vartheta_u$ has a v-structure that is not is $\asc{v}$ and hence not active given $\fa{v}$. Consider a trail $T$
%\jo{P used for too many things... let us use T for trail?}
%Then $T$ has a v-structure since $u$ and $v$ have no common descendants. Let $x$ be the last v-structure starting from $\dsc{v}$.
%If $x=u$, then $T$ is not active given $\fa{v}$, else $x \in \dsc{u}$, hence $x \notin \asc{v}$ and $T$ is not active. It contradicts the assumption on $u$ and $v$. Therefore, $u$ and $v$ have common descendants.
Now, if $u$ is not a descendant of $v$, then $v$ is s-reachable from $u$, which is not possible as $H$ is acyclic.
Hence $u \in \dsc{v}$.
\end{proof}
%\begin{corollary*}
As an immediate consequence, we obtain the following corollary.

\begin{coro}\label{coro:sharedTopo}
If $G$ is soluble and  $\preceq$ is a topological order on $G$, then its restriction $\preceq_H$ to $\Va$ is a topological order on the relevance graph $H$.
\end{coro}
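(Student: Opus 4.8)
\textbf{Proof proposal for Corollary~\ref{coro:sharedTopo}.}
The plan is to deduce the statement directly from Lemma~\ref{lem:topoOrderSoluble}. Recall that the vertex set of the relevance graph $H$ is $\Va$, so $\preceq_H$ is simply $\preceq$ restricted to $\Va$, and proving that $\preceq_H$ is a topological order on $H$ amounts to checking that every arc of $H$ points ``forward'' for $\preceq$. So first I would fix an arbitrary arc $(v,u)$ of $H$, with $v,u \in \Va$.

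Next, since $G$ is soluble, Lemma~\ref{lem:topoOrderSoluble} gives the equivalence $u \in \dsc{v} \Leftrightarrow (v,u) \in H$; in particular, from $(v,u) \in H$ we obtain $u \in \dsc[G]{v}$, i.e.\ there is a directed $v$-$u$ path in $G$. Because $\preceq$ is by hypothesis a topological order on $G$, the existence of this path forces $v \prec u$, hence $v \prec_H u$. Since the arc $(v,u)$ of $H$ was arbitrary, every arc of $H$ respects $\preceq_H$, which is exactly the statement that $\preceq_H$ is a topological order on $H$.

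There is no real obstacle here: the corollary is an immediate consequence of the soluble direction of Lemma~\ref{lem:topoOrderSoluble} together with the defining property of a topological order, and the only point worth stating explicitly is that solubility is needed to guarantee that an arc $(v,u)$ of $H$ corresponds to a \emph{directed} path in $G$ (in the non-soluble case one only has the implication $u \in \dsc{v} \Rightarrow (v,u) \in H$, which is not enough to conclude).
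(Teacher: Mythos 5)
Your argument is correct and is exactly the reasoning the paper intends: it states the corollary as an immediate consequence of Lemma~\ref{lem:topoOrderSoluble}, and your use of the soluble-case equivalence to turn each arc $(v,u)$ of $H$ into a directed $v$-$u$ path in $G$, then invoking the topological order on $G$, is that argument spelled out. Nothing is missing.
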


% \comGO{Should we make a corollary here?}
%\end{corollary*}

\begin{proof}[Proof of Lemma~\ref{lem:SolubleEqualRJT}]
% Let $G' = (V,E')$ be the digraph obtained by adding to $A$ the arcs $(u,v)$ in $(\Va)^2$ such that $u \preceq v$.
Let $G$ be a soluble influence diagram.
We start by proving that Algorithm~\ref{alg:buildSolubleRjt} with $G$ as an input returns an \rjt $\calG$.
It suffices to show that it is possible to compute topological orderings in
step~\ref{step:topo}, that is, to prove that $H$, $G'$ and $G''$, defined in \Cref{alg:buildSolubleRjt}, are acyclic.
$H$ is acyclic because the ID is soluble.
 % steps \ref{step:topoH} and \ref{step:topo}, that is, to show that $H$ and $G''$ are acyclic.
We now prove that $G'$ is acyclic. %, which we now prove.
As $G$ is acyclic and by definition of $G'$, a cycle in $G'$ contains necessarily two vertices of $\Va$. Let $u$ and $v$ thus be two distinct elements of $\Va$.
Remark that, if there exists a path from $u$ to $v$ in $G$, then $v$ is strategically reachable from $u$, and $u \preceq_H v$.
Hence, by definition of $G'$, the indices of vertices in $\Va$ for $\preceq_H$ can only increase along a path in $G'$. There is therefore no cycle in $G'$ containing two vertices in $\Va$, and thus no cycle in $G'$.
We now prove that $G''$ is acyclic. %, which we now prove.
Suppose that there is a cycle in $G''$.
Let $\preceq_{G'}$ be a topological order on $G'$, and let $v_h$
% Let $(u_h,v_h)$  be the arc of $E'' \backslash E'$ such that $v_h$
be the
smallest vertex $v$ for $\preceq_{G'}$ in the cycle such that there is an arc $(u,v)$ in $E'' \backslash E'$ in the cycle.
And let $(u_h,v_h)$ be the corresponding arc in the cycle.
Let $(u_l,v_l)$ be the arc of $E'$ right before $(u_h,v_h)$ in the cycle such that $v_l \in \Va$.
Arc $(u_l,v_l)$  is possibly identical to $(u_h,v_h)$.
By definition of $G'$, given two disjoint vertices $u$ and $v$ in $\Va$, either $(u,v) \in E'$ or $(v,u) \in E'$. 
Since $v_h \preceq_{G'} v_l$ by definition of $v_h$, we have either $v_h = v_l$ or $(v_h,v_l) \in E'$.
And since all the arcs in the $v_l$-$u_h$ subpath of the cycle are in $E'$, we have $u_h \in \cdsc[G']{v_l}$.
Hence $u_h \in \dsc[G']{v_h}$%\comGO{I follow the reasoning until this last step, but then I am missing something: $v_h$ could perfectly be incomparable with any of the elements in $G'$ although it is the first in the topological order, no? Unless, you choose a topological order with specific properties...}
, which contradicts the definition of $E''$ in Step~\ref{step:app}.
Hence, Algorithm~\ref{alg:buildSolubleRjt} always returns an \rjt, which we denote by~$\calG$.
% Let $<$ be a topological order for $G''$. Then as $E' \subseteq E''$, order $<$ coincides with $\preceq$ on $\Va$.
% Suppose that $v\prec u$ with with $v \in \Va$ and $u \in \Vs$. Then by construction of $G''$, we have $u \in \cdsc{\Va_{\succeq v}}$. Hence, $<$ is an extension of $\preceq$ with the desired properties.
% \inlAP{First part of the proof of Theorem 14.}

It remains to prove that $\calG$ is such that $\M{C_v} \subseteq \fa{v}$ for each decision vertex $v\in \Va$.
We start with two preliminary results.
Remark that $E \subseteq E'$ implies that $\preceq$ is a topological order on $G$.
Let $\preceq_H$ denotes its restriction to $\Va$. 
% Lemma~\ref{lem:topoOrderSoluble} 
Corollary~\ref{coro:sharedTopo}
ensures that $\preceq_H$ is a topological order on $H$.
% , and $E' \subseteq E''$ gives that $\preceq$ coincide with $\preceq_H$ on $\Va$.
Hence, we have
\begin{equation}\label{eq:extremalVertex}
	\vartheta_{\Va_{\prec v}} \perp \dsc{v} \mid \fa{v}, \quad
\text{for all }v \in \Va.\end{equation}
Furthermore, if
$u \in \Va$ and $v \in \Vs_{\succeq u} $, the definition of $G'$ implies the existence of a path from $u$ to $v$ in $G$, and hence the existence of a path from $\Va_{\succeq u}$ to $v$  in $G$.

We now prove $\M{C_v} \subseteq \fa{v}$ for each $v\in \Va$.
This part of the proof is illustrated on Figure~\ref{fig:proofOfTheoSoluble}.a.
Let $v$ be a vertex in $\Va$, let $u\in C_v \backslash \fa{v}$, and let $b \in \Va_{\prec u}$. 
We only have to prove that $u$ is d-separated from $\vartheta_b$ given $\fa{v}$.
We start by proving that $u$ and $v$ have common descendants.
% and let $P$ be a $u$-$\vartheta$
% Suppose \comAP{On peut enlever le suppose, et ne plus faire un raisonnement par contradiction} that there is an $u$-$\vartheta_b$ trail $P$ that is active given $\fa{v}$.
Proposition~\ref{prop:rjtBuildAlgorithm} guarantees that \eqref{eq:CvContains} is an equivalence.
Hence, there exists a $u$-$v$ trail in $V_{\succ v}$.
Consider such a $u$-$v$ trail $Q$ with a minimum number of v-structures.
Suppose for a contradiction that $Q$ has more than one v-structure. 
Starting from $v$, let $w_1$ be the first v-structure of $Q$ and $u_1$ bet its first vertex with diverging arcs $u_1$.
Using the result at the end of the previous paragraph, we have $u_1 \in \cdsc{\Va_{\succeq v}}$.
Since $Q$ has been chosen with a minimal number of $v$-structures, we obtain $u_1 \in \cdsc{\Va_{\succ v}}$.
Let $a_1$ denote an ascendant of $u_1$ in $\Va_{\succ v}$.
Since $w_1 \in \dsc{v}$, Equation \eqref{eq:extremalVertex} ensures that $w_1 \perp \vartheta_v \mid \fa{a_1}$.
Hence, the $v$-$w_i$ path  is not active given $\fa{a_1}$, and it therefore necessarily intersects $\prt{a_i}$. Hence, $u_1 \in \dsc{v}$, and $Q$ there exists a $u$-$v$ trail $Q$ with fewer v-structures than $Q$, which gives a contradiction. Trail $Q$ therefore has a unique $v$-structure, and $u$ and $v$ have a common descendant $w$.
Hence, if $\vartheta_b$-$u$ trail $P$ is active given $\fa{v}$, then $P$ followed by a $u$-$w$ path is active given $\fa{v}$.
The fact that $\dsc{v} \perp \vartheta_b | \fa{v}$ ensures that there is no-such path $P$, and we have proved that $u$ is d-separated from $\vartheta_b$ given $\fa{v}$.

\begin{figure}
\def\l{1}
\def\h{0.7}

\begin{center}
a)
\begin{tikzpicture}
% Axis
\draw (-0.5*\l,3.5*\h) -- (4*\l,3.5*\h);
\draw[->] (4*\l,3.5*\h) -- (4*\l,3.2*\h);
\node[below] at (3.7*\l,3.5*\h) {$V_{\succeq v}$};

% Main nodes
\node[act](b ) at (3.5*\l,7*\h) {$b $};
\node[sta](u ) at (  0*\l,4.2*\h) {$u $};
\node[act](v ) at (  3*\l,  4*\h) {$v $};

% Lower part
\node[sta](x ) at (  0*\l,3.0*\h) {};
\node[sta](w) at (.5*\l,0.5*\h) {$w$};
\node[sta](u1) at (1.5*\l,1.5*\h) {$u_1$};
\node[act](a1) at (1.5*\l, 2.7*\h) {$a_i$};
\node[sta](w1) at (2.5*\l,.5*\h) {$w_1$};

\draw[dash dot] (v ) to node[very near start] (i) {} (w1);
\draw[dash dot] (w1) -- (u1) -- (w) to node[midway,left] {$Q$} (x );
\draw[arc, dash dot] (u ) -- (x );
\draw[dotted] (i) -- (a1) -- (u1);

% Parents of v
\node[](pv) at (  2.5*\l,5.1*\h) {};
\node[](qv) at (  3.5*\l, 5.1*\h) {};
\node[draw,rounded corners, fit=(pv.east) (qv.south) (pv.west) (pv.north)] (prt) {};
\node[right=0.3 of prt.west] {$\prt{v}$};
\draw[arc] (prt.south) to (v );

% upper path
\node (p1) at (1*\l,6.5 *\h) {};
\node (z1) at (1.5*\l,6 *\h) {};
\node (p2) at (2.5*\l,7*\h) {};
\node (z2) at (3*\l,6*\h) {};

\draw[dashed] (u ) to node[midway,left] {$P$} (p1.center) -- (z1.center) -- (p2.center) -- (z2.center) -- (b );
\draw[dotted] (z1) -- (prt);
\draw[dotted] (z2) -- (prt);
\end{tikzpicture}
\qquad
b)
\begin{tikzpicture}
\def\l{1}
\def\h{1}
\node[act] (v) at (3.0*\l,4.0*\h) {$v$};
\node[sta] (x) at (1.0*\l,3.5*\h+ 0.4*\h) {$x$};
\node[draw,rounded corners] (p) at (2.0*\l,2.5*\h) {$\prt{u}$};
\node[sta] (y) at (0.0*\l,2.0*\h) {$y$};
\node[act] (u) at (2.0*\l,1.5*\h) {$u$};
\node[sta] (z) at (0.0*\l,1.0*\h) {$z$};
\node[sta] (w) at (1.0*\l,0.0*\h) {$w$};

\node at (1.5*\l,3.0*\h+ 0.4*\h) (v1) {};
\node at (2.0*\l,3.5*\h+ 0.4*\h) (d) {};
\node at (2.5*\l,3.0*\h+ 0.4*\h) (v2) {};

\draw (v) -- (v2.center) -- (d.center) -- (v1.center) -- (x) -- (y) -- (z) -- (w);
\draw[dotted] (v1.center) -- (p) -- (v2.center);
\draw[dotted] (w) -- (u) -- (p);

\node[draw,rounded corners] (Cw) at (4.5*\l,0.0*\h) {$C_w$};
\node[draw,rounded corners] (Cz) at (4.5*\l,1.0*\h) {$C_z$};
\node[draw,rounded corners] (Cu) at (4.5*\l,2.0*\h) {$C_u$};
\node[draw,rounded corners] (Cy) at (4.5*\l,3.0*\h) {$C_y$};
\node[draw,rounded corners] (Cx) at (4.5*\l,4.0*\h) {$C_x$};

\draw[<-,dashed] (Cw) -- (Cz);
\draw[<-,dashed] (Cz) -- (Cu);
\draw[<-,dashed] (Cu) -- (Cy);
\draw[<-,dashed] (Cy) -- (Cx);

\end{tikzpicture}
\end{center}
\caption{Illustration o the proof of Lemma~\ref{lem:SolubleEqualRJT}. a) Direct statement, with $j = i-1$. Trail $P$ is illustrated by dashed line, trail $Q$ by a dash-dotted line, and other paths by dotted lines. b) Converse statement, with path $Q$ in plain line, and other paths dotted lines, and paths in $\calG$ in dashed lines.}
\label{fig:proofOfTheoSoluble}
\end{figure}
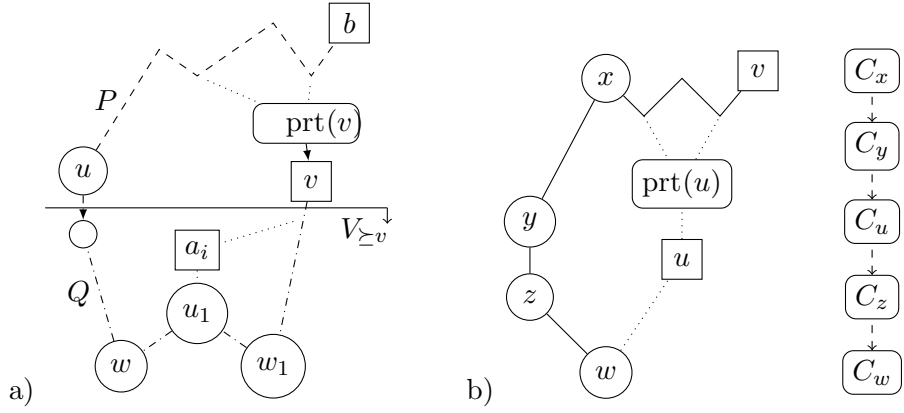

Conversely, let $G$ be a non-soluble influence diagram, and $\calG$ an RJT on $G$.
Let $u$ and $b$ be two vertices in $\Va$ such that $\dsc{v} \nperp \vartheta_{u} | \prt{v}$ and $\dsc{u} \nperp \vartheta_{v} | \prt{u}$.
Without loss of generality, we assume that if there is a path between $C_u$ and $C_v$, it is from $C_v$ to $C_u$. To prove the converse, we prove that $\M{C_u} \neq \fa{u}$.
This part of the proof is illustrated on Figure~\ref{fig:proofOfTheoSoluble}.b.
% Remark that this ensures that $v$ cannot be a descendant of $u$.
 %\comGO{where is this used in the proof?}
% Without loss of generality we assume that $u \succ v$.
There exists an active trail $Q$ from $w \in \dsc{u}$ to $\vartheta_v$ given $\prt{u}$.
Starting from $w$, let $x$ be the first vertex with diverging arcs of $Q$ if $Q$ contains such a structure, and be equal to $v$ otherwise.
And let $P$ be the $w$-$x$ subtrail of $Q$.
Remark that $P$ must be an $x$-$w$ path in $G$, because any passing v-structure on $P$ cannot be at a descendant of $w$, for it would then be a descendant of $u$ which could not have any descendant in $\fa{u}$ as $G$ is acyclic.
The path $P$ contains no v-structure, and is active given $\fa{u}$. Hence, it does not intersect $\fa{u}$.
Since $x$ and $u$ have $w$ as common descendant, Proposition \ref{prop:rjtProperties} ensures that $C_x$ and $C_u$ are on the same branch of $\calG$.
If $v=x,$ $x \in \asc{w}$ and there is a path in $\calG$ from $C_x$ to $C_w$, moreover, since we assumed $C_u$ is a descendant of $C_v$ in $\calG$, and since $u \in \asc{w},$ then the path  from $C_x$ to $C_w$ contains $C_u$ and all the vertices of $P$.
Now, if $x \neq v$, then $x$ is the first vertex with diverging arcs, and in that case it belongs to $\asc{u}$, because $Q \setminus P$ must contain at least one v-structure and any such v-structure can only be at a node in $\asc{u}$.
% And since $x=v$ or $x \in \asc{u}$\comGO{Why? Stuck here...}, 
So, again, there is a path in $\calG$ from $C_x$ to $C_w$ which contains $C_u$ and all the vertices of $P$.
Starting from $x$, let $y$ be the last vertex of $P$ such that $C_y$ is above $C_u$ in $\calG$, and $z$ be the child of $y$ in $P$.
But since $Q$ is active, the $y$-$\vartheta_{v}$ subtrail of $Q$ is active given $\fa{u}$, and we therefore have $\M{C_u} \neq \fa{u}$.
\end{proof}

\begin{proof}[Proof of Theorem~\ref{theo:validEqualSoluble}]
If $G$ is soluble, Lemma~\ref{lem:SolubleEqualRJT} ensures that Algorithm~\ref{alg:buildSolubleRjt} builds an \rjt $\calG$ such that $\Gfree = G$, and Theorem~\ref{theo:PolytopesAndGraphs} ensures that $\Pfree = \calS(P)$.

% Conversely, let $G = (V,E)$ be a non soluble influence diagram, and $\calG$ an \rjt. Lemma~\ref{lem:SolubleEqualRJT} gives $\Gfree \neq \calG$, and more precisely $\Efree \subsetneq E$.
% Hence, there exists distributions that factorize as a directed graphical model on $\Gfree$ that do not factorize on $G$, which implies that there exists parametrization of influence digram $G$ such that $\calS(G) \neq \calS(\Gfree)$. Theorem~\ref{theo:PolytopesAndGraphs} ensures that $\Pfree = \calS(\overline{\Gfree})$, which concludes the proof.
% \end{proof}

% % As an immediate corollary, we obtain that $G$ is soluble if and only if.

% % The following proposition show that we cannot expect to describe the solutions using a simple LP (but says nothing more than NP-harness for non-soluble LIMIDs, so it is not so interesting)
% % \comAP{The section stops here. I let the remaining just for discussion between us.}

% \begin{proof}[Proof of Theorem~\ref{theo:solubleEqualConvex}]
% Theorem~\ref{theo:validEqualSoluble} immediately implies the ``if'' statement.
% % The only if is more difficult.
% We now prove the ``only'' if.

Consider now the result for non-soluble IDs. Let $G$ be a non-soluble influence diagram.
Let $a$ and $b$ be two decision vertices that are both strategically dependent on the other one.

%, let $\calG$ be a junction tree.

\begin{figure}
\begin{center}
\begin{tikzpicture}
\def\l{-1}
\def\h{-1.1}
\def\stas{\phantom{$s_1$}}

\node[draw,rounded corners] (vta) at (7*\h,0*\l) {$\vartheta_a$};
\node[act] (a) at (6*\h,0*\l) {$a$};
\draw[arc,blue] (vta) -- (a);
\node at (6*\h,2*\l) {$s_{k-1}$};
\node[sta] (sk) at (6*\h,2*\l) {\stas};
\node[sta] (s1) at (6*\h,4*\l) {$s_1$};
\node[sta] (s0) at (6*\h,6*\l) {$s_0$};

\node[sta] (ap) at (5*\h,0.5*\l) {\stas};
\node[sta] (skp) at (5*\h,2.5*\l) {\stas};
\node[sta] (s1p) at (5*\h,4.5*\l) {\stas};
\node at (5*\h,0.5*\l) {$a'$};
\node at (5*\h,2.5*\l) {$s'_{k-1}$};
\node at (5*\h,4.5*\l) {$s'_1$};

\node[sta] (sks) at (5*\h,1.5*\l) {\stas};
\node[sta] (s1s) at (5*\h,3.5*\l) {\stas};
\node[sta] (s0s) at (5*\h,5.5*\l) {\stas};
\node at (5*\h,1.5*\l) {$s''_{k-1}$};
\node at (5*\h,3.5*\l) {$s''_1$};
\node at (5*\h,5.5*\l) {$s''_0$};

\node[sta] (tk) at (4*\h,1*\l) {$t_k$};
\node[sta] (t2) at (4*\h,3*\l) {$t_2$};
\node[sta] (t1) at (4*\h,5*\l) {$t_1$};
\node[sta] (pk) at (2.9*\h,1*\l) {$p_k$};
\node[sta] (p2) at (2.9*\h,3*\l) {$p_2$};
\node[sta] (p1) at (2.9*\h,5*\l) {$p_1$};
\node[act] (b) at (2*\h,3*\l) {$b$};
% \node[sta] (u) at (2*\h,6*\l) {$u$};
\node[sta] (wb) at (1*\h,3*\l) {$w_b$};
\node[sta] (wu) at (1*\h,5*\l) {$w_{s_0}$};
\node[sta] (w) at (0*\h,4*\l) {$w$};

\draw[arc,blue,dashed]  (a) -- (ap);
\draw[arc,blue,dashed] (sk) -- (skp);
\draw[arc,blue,dashed] (s1) -- (s1p);
\draw[arc,blue,dashed]  (ap) -- (tk);
\draw[arc,blue] (skp) -- (t2);
\draw[arc,blue] (s1p) -- (t1);

\draw[arc,blue,dashed] (sk) -- (sks);
\draw[arc,blue,dashed] (s1) -- (s1s);
\draw[arc,blue,dashed] (s0) -- (s0s);
\draw[arc,blue] (sks) -- (tk);
\draw[arc,blue] (s1s) -- (t2);
\draw[arc,blue] (s0s) -- (t1);

\draw[arc,dashed] (tk) -- (pk);
\draw[arc,dashed] (t2) -- (p2);
\draw[arc,dashed] (t1) -- (p1);
\draw[arc] (pk) -- (b);
\draw[arc] (p2) -- (b);
\draw[arc] (p1) -- (b);
\draw[arc,red] (b) -- (wb);
\draw[arc,red] (wb) -- (w);
\draw[arc,blue,dashed] (s0) to[bend right] (wu);
% \draw[arc,blue,dashed] (u) -- (wu);
\draw[arc,blue] (wu) -- (w);

%legend
\node at (-0.5*\h,3*\l)[right, align=left]  {
	$ \calX_{s_0} = \calX_{s''_0} = \calX_{w_{s_0}} = \{1,2,\els\} $ \\
	$\calX_{s_\ell} = \calX_{s'_\ell} = \calX_{s''_\ell} = \{1,2\}$, $\forall \ell > 0$ \\
	$\calX_{t_\ell} = \calX_{p_\ell} = \{0,1\}$, $\forall \ell >0$ \\
	$\calX_{b} = \calX_{w_b} = \{1,2,\jok\}$ \\
	$\calX_{w} = \{-10,0,1,2\}$ \\
	\tikz[baseline=-0.5ex]{\draw[arc,red](0,0) -- (0.8,0);} P  \quad\quad \tikz[baseline=-0.5ex]{\draw[arc,blue](0,0) -- (0.8,0);} Q \\
	$p_{s_0}(x) = 1/3$ for $x$ in $\{1,2,\els\}$\\
	$p_{s_\ell}(x) = 1/2$ for $\ell>0$ and $x \in \{1,2\}$ \\
	$X_{t_\ell} = \ind(X_{s''_{k-1}} \neq X_{s'_\ell})$, $\forall \ell$ \\
	$X_{s''_\ell} = X_{s_\ell}, \quad X_{s'_\ell} = X_{s_i}$, $\forall \ell$ \\
	$X_{p_\ell} = X_{t_\ell}, \quad X_{w_b} = X_{b}$, $\forall \ell$ \\
	$X_{w_{s_0}} = X_{s_0}$  \\
	$X_w =
	\left\{\begin{array}{ll}
	0 & \text{if } X_{w_b} = \jok\\
	i & \text{if } X_{w_b} = X_{w_{s_0}} =i \text{ for } i \in \{1,2\}  \\
	-10 & \text{otherwise.}
	\end{array}\right.$

	% $X_{t_i} = \ind(X_{s_{i-1}} = X_{s_i})$
	};
% \node at  {$X_{t_i} = \ind(X_{s_{i-1}} = X_{s_i})$};
\end{tikzpicture}
\end{center}
\caption{Influence diagram and parametrization used in the proof of Theorem~\ref{theo:validEqualSoluble}}
\label{fig:solubleEqualConvex}
\end{figure}
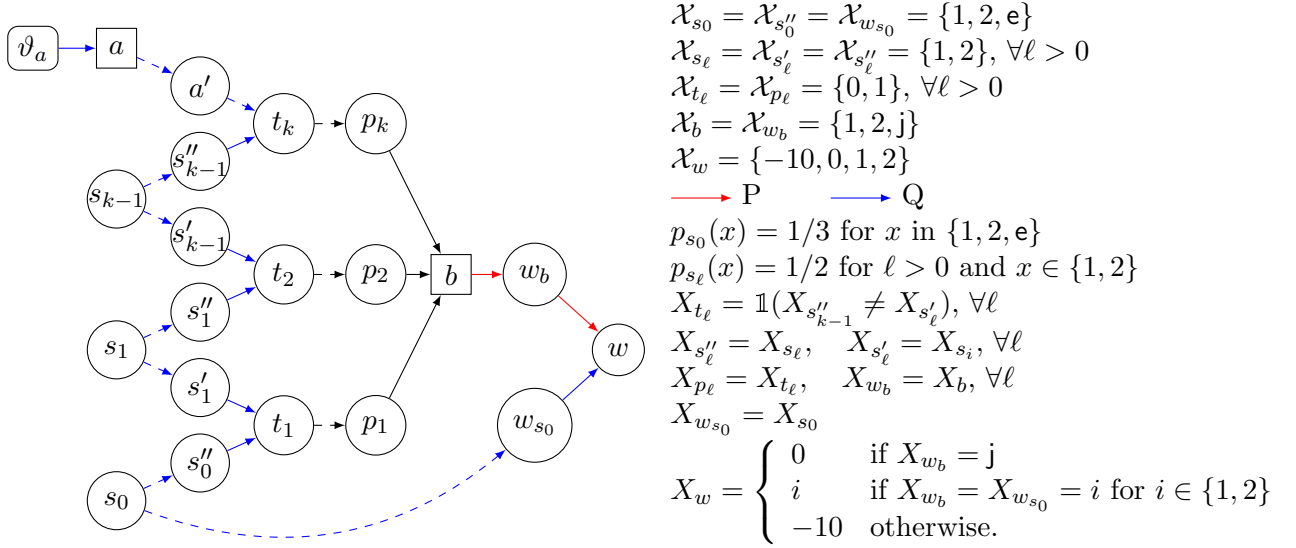

% \vlil{not finished ?}
% First prove that there is a decision vertex $v$ such that $\M{C_v} \neq \fa{v}$ (A little boring in the general junction tree case, but doable using the usual technique).
% Then use the same kind of examples as those used to build a local minimum, and conclude.

First, we suppose that $a \notin \asc{b}$ and $b \notin \asc{a}$. Let $P$ be a path from $a$ to $w \in \dsc{b}$ with a minimum number of arcs, and $Q$ be a $b$-$w$ path with a minimum number of arcs. Then $w$ is the unique vertex in the intersection of $P$ and $Q$.
Let $u$ and $v$ be the parents of $w$ in $P$ and $Q$ respectively.
Consider a parametrization where all the variables that are not in $P$ or $Q$ are unary, all the variables in $P$ and $Q$ are binary, all the variables in the $a$-$u$ subpath of $P$ are equal to $X_a$, all the variables in the $b$-$v$ subpath of $P$ are equal to $X_b$, and $p_{w|\prt{w}}$ is defined arbitrarily.
Let $\calG$ be an arbitrary junction tree, $C$ be its cluster containing $\fa{w}$.
Then choosing a distribution $\mu_a$ as policy $\delta_a$ and a distribution $\mu_b$ as policy $\delta_b$ implies that the restriction of $\mu_C$ to $X_{uv}$ is $\mu_{uv} = \mu_a \mu_b$. Hence, the marginalization on $X_{uv}$ of the set of distributions $\mu_C$ that can be reached for different policy is the set of independent distributions, which is not convex. Hence, $\calS(\calG)$ is not convex.

We now consider the case where $a \in \asc{b}$ or $b \in \asc{a}$.
% This part of the proof is illustrated on Figure~\ref{fig:solubleEqualConvex}.
W.l.o.g.,~we suppose $a \in \asc{b}$.
There exists a trail from $\vartheta_a$ to $w$ in $\dsc{b}$ that is active given $\prt{b}$. Let $Q$ be such a trail with a minimum number of $v$-structures.
And let $P$ be a $b$-$w$ path.
W.l.o.g., we suppose that $w$ is the only vertex in both $P$ and $Q$.
Let $w_b$ be the parent of $w$ on $P$ and $w_{s_0}$ its parent in $Q$.
Starting from $w$, let $s_0, \ldots,s_{k-1}$ denote the vertices with divergent arcs in $Q$, let $t_1,\ldots,t_k$ the $v$-structures, and $p_{\ell}$ denote the parent of $b$ that is below $t_{\ell}$.
Finally let $s'_{\ell}$ (resp.~$s''_{\ell}$) denote the parent of $t_{\ell}$ (resp~$t_{\ell+1}$) on the $s_{\ell}$-$t_{\ell}$ subpath (resp.~$s_{\ell}$-$t_{i+1}$ subpath) of $Q$.
The structures that we have just exhibited entail that $G$ contains a subgraph of the form represented on Figure~\ref{fig:solubleEqualConvex}.
Each dashed arrows correspond to a path whose length may be equal to $0$, in which case the vertices connected by the path are the same.

% These notions are illustrated on Figure~\ref{fig:solubleEqualConvex}.
% We have drawn the case where $a \neq t_k$.
% Paths $P$ and $Q$ are respectively drawn in red and blue. Plain arrows correspond to arcs and dashed arrows to paths whose extreme points are potentially identical.

We now introduce a game that we will be able to encode on the graph of Figure~\ref{fig:solubleEqualConvex} and hence on $G$.
This game is a dice game with two players $a$ and $b$.
Before rolling a uniform die with three faces, player $a$ chooses to play $1$ or $2$, where ``playing $i$" means observing if the die is equal to $i$, and passing this information to player $b$.
The die $s_0$ is rolled.
If $a$ has played $1$ (resp.~$2$), he passes the information \textsf{true} to $b$ if the die $s_0$ is equal to $1$ (resp.~$2$), and $\textsf{false}$ if it is equal to $2$ (resp.~$1$), or something else $\els$.
Player $b$ does not know what $a$ has played.
Based on the information he receives, player $b$ decides to play $1$, $2$, or~joker, that we denote $\jok$.
If he plays $\jok$, then none of the player either earns or loses money. If he plays $i$ in $\{1,2\}$, then both players earn $i$ euros if die $s_0$ is equal to $i$, and lose 10 euros otherwise. The goal is maximize the expected payoff.
This game has two locally optimal strategies $\delta^1$ and $\delta^2$. In strategy $\delta^i$, player $a$ plays $i$ and $b$ plays $i$ if he receives \textsf{true} and $\jok$ otherwise.
Both strategy are locally optimal: each players decision is the best possible given the other ones. But only strategy $\delta^2$ is globally optimal.

It changes nothing to the game if we add $k-1$ coin tosses $X_{s_1},\ldots,X_{s_{k-1}}$, and player $b$ observes the $k$ equality tests $X_{t_1},\ldots, X_{t_{k-1}}$, where $X_{t_\ell} =  \ind(X_{s_{\ell-1}} = X_{s_\ell})$.
Indeed, player $b$ can compute $ \sum_{\ell=1}^k x_{p_\ell} $ and knows that $X_a = X_{s_0}$ if and only if this sum is even.
% \inlAP{Il faut en mettre plus ici. Traiter le case $a=t_k$ où $t_k$ prend sa valeur dans $\{1,2\}$}
The parameterization of the influence diagram that enables to encode this game is specified on the right part of Figure~\ref{fig:solubleEqualConvex}.
For any $x$, the mapping $\ind_x(\cdot)$ is the indicator function of $x$.
All the variables that are not on Figure~\ref{fig:solubleEqualConvex} or on the paths on Figure~\ref{fig:solubleEqualConvex} are unary.
All the variables along paths represented by dashed arrows are equal.
% The parametrization ensures that variable $X_{p_i}$ is equal to $1$ if and only if $X_{s_{\ell-1}}$ and $X_{s_\ell}$ are not equal.
% Hence $X_a$ and $X_{s_0}$ are equal if and only if $\sum_{i=1}^k X_{p_\ell} = 0 \bmod 2$.
Policies $\delta^i$ can therefore be defined as
\begin{equation*}
\delta_a^i = \ind_i \quad \text{and} \quad \delta_b^i(x_{p_1},\ldots,x_{p_k}) = \left\{
\begin{array}{ll}
i & \text{if } \sum_{\ell=1}^k x_{p_\ell} = 0 \bmod 2, \\
0 & \text{otherwise.}
\end{array}
\right.
\end{equation*}
where $\ind_i$ is the Dirac in $i$. 
A technical case to handle is the one where $a=a'=t_k$. In that case, we define $\calX_a = \{0,1\}$ and $\delta_a^i = \ind_i(X_{s_{k-1}}'') $.

% \inlAP{remplacer les $s_i$ par des $s_\ell$}
% If $a=t_k$, we define $\calX_a = \{0,1\}$ and  $\delta_a^i(x_{s_{k-1}}) = \ind_{i}(x_{s_{k-1}})$.
% \inlGO{This does not make sense to me. I thought that $a$ could play $1$ or $2$. For me then $t_k$ is the equality test between $X_a'$ and $X_{s_{k-1}}$}
% \inlGO{In Figure~\ref{fig:solubleEqualConvex} node $w$ should be a diamond. The description of the game only covers the case with a single die. The index $i$ is used for several things that have nothing to do... What is \texttt{e}? The fact that $X_{s_0}$ can take three different values is not compatible with the description of the game. Also, is it really necessary to have all these duplicate versions of the variables}
Consider now a junction tree $\calG$ on $G$.
Let $C$ be a node of $\calG$ that contains $\fa{w}$. Then $C$ contains both $w_b$ and $w_{s_0}$.
Let $\mu_{C}^1$ and $\mu_{C}^ 2$ be the distributions induced by $\delta^1$ and $\delta^2$ on $X_{C}$, and $\mu_{bs_0}^1$ and $\mu_{bs_0}^2$ their marginalizations on $X_{w_bw_{s_0}}$.
Since $X_{w_b} = X_b$ and $X_{w_{s_0}} = X_{s_0}$, 
$\mu_{bs_0}^1$ and $\mu_{bs_0}^2$ are the distributions induced by $\delta^1$ and $\delta^2$ on $X_{bs_0}$.
Let $\mu_{bs_0} = \frac{\mu_{bs_0}^1 + \mu_{bs_0}^2}{2}$.
Denoting again $\ind_{x}$ the Dirac at $x$, we have 
\begin{equation*}
\mu_{bs_0}^1 =  \frac{\ind_{11} + \ind_{\jok 2} + \ind_{\jok \els}}{3}
,\quad
\mu_{bs_0}^2 =  \frac{\ind_{\jok 1} + \ind_{2 2} + \ind_{\jok \els}}{3},
\enskip \text{and} \enskip
\mu_{bs_0} = \frac{\ind_{11} + \ind_{\jok 1} + \ind_{\jok 2} + \ind_{2 2} + 2 \ind_{\jok \els}}{6}.
\end{equation*}
We claim that there is no policy $\delta$ that induces distribution $\mu_{bs_0}$ on $X_{bs_0}$.
Indeed, in a distribution induced by a policy $\delta$, it follows from the parametrization that
if $\bbP(X_a = 1) < 1$ and $\bbP(X_b = 1) > 0$, then $\bbP(X_b = 1, X_u = \els) > 0$.
And, if $\bbP(X_a = 2) < 1$ and $\bbP(X_b = 2) > 0$, then $\bbP(X_b = 2, X_u = \els) > 0$.
(In both claims, ``if $\bbP(X_a = 1) < 1$'' must be replaced by ``if $\delta_a(x_{s_{k-1}}) \neq \ind_i(x_{s_{k-1}})$'' when $a = t_k$).
As $\mu_{ub}$ is such that $\bbP(X_b = 1) > 0$, $\bbP(X_b = 2) > 0$, and $\bbP(X_b = 1, X_u = \els) = 0$, it cannot be induced by a policy.
Hence, $\calS(\calG)$ is not convex.
\end{proof}

\section{Algorithm to build a small \rjt}
\label{sec:algorithm_rjt}

In this section we present an algorithm to build a \rjt without considering a topological ordering on the initial graph $G = (V,A)$. 
% Furthermore, this algorithm builds a \rjt with small decision clusters.

\begin{algorithm}[H]
\caption{Build a \rjt}
\label{alg:build_smallest_RJT2}
\begin{algorithmic}[1]
\STATE \textbf{Input} $G = (V,E)$
\STATE \textbf{Initialize} $\mathcal{C} = \emptyset$ and $\calA' = \emptyset$
\STATE $L = \{v \ \text{s.t} \ \cld[G]{v} = \emptyset \}$
\WHILE{$L \neq \emptyset$}
\IF{$\Va \cap L \neq \emptyset$} \label{step:priority}
\STATE Pick $v \in \Va \cap L$ \label{step:priority2}
\ELSE
\STATE Pick $v \in L$
\ENDIF
\STATE $C_v \leftarrow \fa{v}$
\FOR{$C_x \in \mathcal{C}:  v \in C_x$}
\STATE $C_v \leftarrow C_v \cup ( C_x\backslash \{x\} )$
\STATE Remove $C_x$ from $\mathcal{C}$
\STATE Add $\{v,x\}$ in $\calA'$
\ENDFOR
\STATE Add $C_v$ to $\calC$
\STATE Remove $v$ from $G$ and recompute $L$
\ENDWHILE
\STATE $\calA \leftarrow \{(C_u,C_v) \mid (u,v) \in \calA'\}$
\STATE \textbf{Return} $\calG = ((C_v)_{v \in V} , \calA)$
\end{algorithmic}
\end{algorithm}

The only difference between Algorithms~\ref{alg:buildRJT}
and~\ref{alg:build_smallest_RJT2} 
is that the for loop along a reverse topological ordering of Algorithm~\ref{alg:buildRJT} is replaced in Algorithm~\ref{alg:build_smallest_RJT2} by a breadth first search that computes online this reverse topological ordering.
Hence, if we denote $\preceq$ this ordering, 
Algorithm~\ref{alg:build_smallest_RJT2} builds the same \rjt as the one we obtain when we use Algorithm~\ref{alg:buildRJT} with $\preceq$ in input.
Therefore, the \rjt built by Algorithm~\ref{alg:build_smallest_RJT2} satisfies~\ref{eq:minimality_rjt}, and is such that the implications in~\eqref{eq:rjtInclusions} are equivalence.

% Algorithm~\ref{alg:build_smallest_RJT2} does basically the same thing as Algorithm~\ref{alg:buildRJT} on a topological ordering computed by Algorithm~\ref{alg:buildSolubleRjt}.
% Indeed, 
Furthermore, Steps~\ref{step:priority} and~\ref{step:priority2} enable to ensure that, when there is no path between a vertex $u \in \Va$ and a vertex $v\in \Vs$, then $u$ is placed before $v$ in the reverse topological ordering computed by the breadth first search. 
% Remark that when a graph is soluble, its relevance graph is acyclic,
% and two vertices in $u$ and $v$ in $\Va$, either they are not strategically relevant 
% Hence, 
% we use its restriction to $\Va$ as a topological order $\preceq_H$ on $\Va$ in Step~\ref{step:topoH} of Algorithm~\ref{alg:buildSolubleRjt}, 
Therefore, $\preceq$ is a topological ordering on the graph $G''$ used as Step~\ref{step:topo} of Algorithm~\ref{alg:buildSolubleRjt}. 
Hence, if $G$ is soluble, Algorithm~\ref{alg:build_smallest_RJT2} builds a \rjt such that $\Gfree = G$.

% Finally, remark that Steps~\ref{step:priority} and~\ref{step:priority2} may increase the size of the $C_v$ for $v \in \Va$.
% Indeed, as the inclusions in~\eqref{eq:rjtInclusions} are equivalence, 

Remark that on non-soluble IDs, Steps~\ref{step:priority} and~\ref{step:priority2} are a heuristic aimed at minimizing the size of $\B{C_v}$ for each $v$ in $\Vs$.
Such a heuristic is not relevant if valid cuts~\eqref{eq:validInequality} are not used.
In that case, an alternative strategy could be to add as few variable as possible to $C_v$ for $v$ in $\Va$ to improve the quality of the soluble relaxation $\overline{G}$.
This could be done by putting vertices $u$ in $\Vs$ unrelated to $v \in \Va$ after in this topological order, i.e., by replacing $\Va$ by $\Vs$ in Steps~\ref{step:priority} and~\ref{step:priority2}.

\section{McCormick Relaxation}
\label{app:McCormick}

McCormick inequalities allow to turn the NLP formulation~\eqref{pb:NLP}
into the MILP formulation~\eqref{pb:MILP}. Further good bounds ease the resolution
of Problem~\eqref{pb:MILP}. In this section we first discuss these relaxation,
show that in the NLP formulation loose bounds are useless while tight bounds improve
the formulation. Finally, we give an algorithm to compute good quality bounds.

\subsection{Review of McCormick's relaxation}
For the sake of completeness we briefly recall McCormick's relaxation,
and condition for exactness if all of the variables but one are binary.

\begin{prop}
Consider the variables $(x,y,z) \in [0,1]^3$ and the following constraint
\begin{equation}
\label{eq:prod_cst}
	z = x y
\end{equation}
Further, assume that we have an upper bound $y \leq b$.
We call ${\rm {McCormick}}\eqref{eq:prod_cst}$ the following set of contraints
\begin{subequations}
\label{eq:generic_McCormick}
\begin{align}
z \geq\;& y + xb - b \label{cst:mccormick_b}\\
z \leq\;& y  \label{cst:mccormick_a}\\
z \leq\;& bx \label{cst:mccormick_c}
\end{align}
\end{subequations}

If $x,y$ and $z$ satisfy \cref{eq:prod_cst}, then they also satisfy \cref{eq:generic_McCormick}.
If $x$ is a binary variable (that is $x \in \{0,1\}$) and \cref{eq:generic_McCormick} is satisfied,
then so is \cref{eq:prod_cst}.
\end{prop}
\begin{proof}
	Consider $x \in [0,1]$, $y \in [0,b]$ and $z \in [0,1]$, such that $z = x y$.
	Noting that $(1-x)(b-y) \geq 0$ we obtain Constraint~\eqref{cst:mccormick_b}.
	Constraints~\eqref{cst:mccormick_a} and~\eqref{cst:mccormick_c} are obtained
	by upper bounding by bounding one variable.
Now assume that $x \in \{0,1\}$, $y \in [0,b]$ and $z\in[0,1]$ satisfy \cref{eq:generic_McCormick}.
	Then, if $x =1$, constraints~\eqref{cst:mccormick_b} and~\eqref{cst:mccormick_a}
	yield $z=y$. Otherwise, as $z \geq 0$, we have $z=0$ by~\eqref{cst:mccormick_c}.
\end{proof}

%!TEX root=../main_MP.tex

%%%%%%%%%%%%%%%%%%%%%%%%%%%%%%%%%%%%%%%%%%%%%%%%%%%%%%%%%%%%%%%%%%%%%%%%%%%%%%%
%%%%%%%%%%%%%%%%%%%%%%%%%%%%%%%%%%%%%%%%%%%%%%%%%%%%%%%%%%%%%%%%%%%%%%%%%%%%%%%
\subsection{Choice of the bounds in McCormick inequalities}
\label{sec:bounds}
%%%%%%%%%%%%%%%%%%%%%%%%%%%%%%%%%%%%%%%%%%%%%%%%%%%%%%%%%%%%%%%%%%%%%%%%%%%%%%%
%%%%%%%%%%%%%%%%%%%%%%%%%%%%%%%%%%%%%%%%%%%%%%%%%%%%%%%%%%%%%%%%%%%%%%%%%%%%%%%

% Section~\ref{sub:mccormick_inequalities_with_well_chosen_bounds_are_useful} gives examples where weel-chosen bounds

%%%%%%%%%%%%%%%%%%%%%%%%%%%%%%%%%%%%%%%%%%%%%%%%%%%%%%%%%%%%%%%%%%%%%%%%%%%%%%%
\subsubsection{Using $b_{\D{v}} = 1$ leads to loose constraints}
\label{sub:using_}
%%%%%%%%%%%%%%%%%%%%%%%%%%%%%%%%%%%%%%%%%%%%%%%%%%%%%%%%%%%%%%%%%%%%%%%%%%%%%%%

As $\mu_{\D{v}}$ is a probability distribution, $1$  is an immediate upper bound on $\mu_{\D{v}}$.
Let $\calQ^1$ be the polytope $\calQ^b$ obtained using bounds vector $b$ defined by $b_{\D{v}}=1$ for all $v$ in $\Va$.
% but in that case, the McCormick constraints~\eqref{eq:McCormick} does not improve the linear relaxation. 

\begin{prop}
Let $\mu$ be in $\overline{\calP}$. Then there exists $\delta$ in $\Delta$ such that $(\mu,\delta)$ belongs to $\calQ^1$, and the linear relaxation of~\eqref{pb:MILP} is equal to $\displaystyle\max_{\mu \in \overline{\calP}}\sum_{v \in \Vl} \langle r_v , \mu_v \rangle$.
\end{prop}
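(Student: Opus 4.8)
The statement claims two things: (1) for any $\mu \in \overline{\calP}$ there is a policy $\delta \in \Delta$ making $(\mu,\delta) \in \calQ^1$, and (2) consequently the LP relaxation of~\eqref{pb:MILP} equals $\max_{\mu \in \overline{\calP}} \sum_{v \in \Vl} \langle r_v, \mu_v\rangle$. The second part is immediate from the first: the LP relaxation of~\eqref{pb:MILP} is $\max \{ \sum_v \langle r_v,\mu_v\rangle : \mu \in \overline{\calP},\, \delta \in \Delta,\, (\mu,\delta)\in\calQ^1\}$, and if every $\mu\in\overline{\calP}$ can be completed to a feasible point, then projecting onto the $\mu$-coordinates the feasible set is exactly $\overline{\calP}$, so the two optimization problems coincide (the objective does not involve $\delta$). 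So the work is entirely in part~(1).

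For part~(1), fix $\mu \in \overline{\calP}$. For each $v \in \Va$ I need to exhibit a conditional distribution $\delta_{v|\prt{v}} \in \Delta_v$ such that ${\rm McCormick}(v,1)$ holds, i.e.
\begin{equation*}
\mu_{C_v} \geq \mu_{\D{v}} + (\delta_{v|\prt{v}} - 1),\qquad
\mu_{C_v} \leq \delta_{v|\prt{v}},\qquad
\mu_{C_v} \leq \mu_{\D{v}}.
\end{equation*}
The third inequality is automatic since $\mu_{\D{v}} = \sum_{x_v}\mu_{C_v} \geq \mu_{C_v}$ (entries are nonnegative). The natural choice is to define $\delta_{v|\prt{v}}(x_v \mid x_{\prt{v}})$ from $\mu_{C_v}$ itself when possible — e.g.\ $\delta_{v|\prt{v}}(x_v\mid x_{\prt{v}}) = \mu_{C_v}(x_{C_v}) / \mu_{\D{v}}(x_{\D{v}})$ when the denominator is positive — but the subtlety is that $\D{v}=C_v\setminus\{v\}$ may strictly contain $\fa{v}$, so this ratio need not be a function of $x_{\fa{v}}$ alone, hence not a valid $\delta_{v|\prt{v}}$. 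The fix is to use the $\fa{v}$-conditional instead: set $\delta_{v|\prt{v}}(x_v\mid x_{\prt{v}}) = \big(\sum_{x_{C_v\setminus\fa{v}}}\mu_{C_v}\big)(x_{\fa{v}}) \,/\, \big(\sum_{x_{C_v\setminus\prt{v}}}\mu_{C_v}\big)(x_{\prt{v}})$ when the denominator is positive, and arbitrary (say uniform) otherwise. One then checks this is a genuine element of $\Delta_v$ (nonnegative, sums to one over $x_v$), and verifies the two McCormick inequalities pointwise in $x_{C_v}$.

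The verification is a short case analysis on whether $\mu_{\prt{v}}(x_{\prt{v}}) = 0$ and whether $\delta_{v|\prt{v}}(x_v\mid x_{\prt{v}})=0$. If $\mu_{\prt{v}}(x_{\prt{v}})=0$ then $\mu_{C_v}(x_{C_v})=\mu_{\D{v}}(x_{\D{v}})=0$ and both inequalities are trivial. If $\delta_{v|\prt{v}}(x_v\mid x_{\prt{v}})=1$ then the upper bound $\mu_{C_v}\le\delta_{v|\prt{v}}$ is $\mu_{C_v}\le 1$, which holds, and the lower bound becomes $\mu_{C_v}\ge\mu_{\D{v}}$; here I would need to argue that when $\delta$ puts all mass on $x_v$ given $x_{\prt{v}}$, every value of $x_{C_v\setminus\fa{v}}$ consistent with $x_{\prt{v}}$ and $x_v$ carries the full mass — but actually since $b=1$, the cleanest path is to observe the lower bound $\mu_{C_v} \ge \mu_{\D{v}} + \delta_{v|\prt{v}} - 1$ is vacuous whenever $\delta_{v|\prt{v}} < 1$ is not the case and when $\delta_{v|\prt{v}}=1$ it reads $\mu_{C_v}\ge\mu_{\D{v}}$, which may fail with the ratio choice. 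So I anticipate the main obstacle: one must choose $\delta$ carefully so that $\delta_{v|\prt{v}}(x_v\mid x_{\prt{v}}) = 1$ only when it does not force the false inequality $\mu_{C_v}=\mu_{\D{v}}$. A robust remedy is to take $\delta_{v|\prt{v}}$ \emph{strictly} positive, e.g.\ $\delta_{v|\prt{v}}(x_v\mid x_{\prt{v}}) = \max\big(\varepsilon,\, \text{ratio}\big)$ renormalized, or simply the uniform distribution $1/|\calX_v|$ for every $v\in\Va$: with $b=1$ the lower bound $\mu_{C_v}\ge\mu_{\D{v}}+(1/|\calX_v|)-1$ holds since the RHS is $\le 0$ when $|\calX_v|\ge 1$ whenever $\mu_{\D{v}}\le 1$... but the $\le 0$ claim needs $\mu_{\D{v}} \le 1 - 1/|\calX_v|$, which can fail. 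The genuinely safe choice giving a vacuous lower bound is to ensure $\delta_{v|\prt{v}}\le \mu_{\D{v}}$ is automatic — so I would revisit and take $\delta_{v|\prt{v}}(x_v\mid x_{\prt{v}})$ to be the ratio $\mu_{\fa{v}}(x_{\fa{v}})/\mu_{\prt{v}}(x_{\prt{v}})$, for which the upper bound $\mu_{C_v}\le\delta_{v|\prt{v}}$ follows from $\mu_{C_v}\le\mu_{\fa{v}}$ and the lower bound $\mu_{C_v}\ge\mu_{\D{v}}+\delta_{v|\prt{v}}-1$ follows from $\mu_{\D{v}} + \mu_{\fa{v}}/\mu_{\prt{v}} - 1 \le \mu_{\D{v}}\cdot\mu_{\fa{v}}/\mu_{\prt{v}}$ when... and here one uses $1 - \mu_{\D{v}} \ge 1 - \mu_{\prt{v}} \ge (1-\mu_{\fa{v}}/\mu_{\prt{v}})$, i.e. that decreasing the conditioning set only increases mass — which is a valid monotonicity of marginals since $\prt{v}\subseteq\D{v}$ implies $\mu_{\D{v}}\le\mu_{\prt{v}}$ pointwise along a consistent extension. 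Pinning down exactly this chain of elementary inequalities is the crux; once it is in place, the conclusion follows by assembling the per-$v$ choices into $\delta\in\Delta$ and invoking the argument of the first paragraph.
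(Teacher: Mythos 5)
You converge on the right construction---$\delta_{v|\prt{v}}(x_v|x_{\prt{v}})=\mu_{\fa{v}}(x_{\fa{v}})/\mu_{\prt{v}}(x_{\prt{v}})$ when the denominator is positive, an arbitrary distribution (e.g.\ a Dirac) otherwise---which is exactly the paper's choice, and your reduction of the second claim to the first (projection onto the $\mu$-coordinates, since $\delta$ does not appear in the objective), the treatment of the case $\mu_{\prt{v}}(x_{\prt{v}})=0$, the inequality $\mu_{C_v}\leq\mu_{\D{v}}$, and the upper bound $\mu_{C_v}\leq\mu_{\fa{v}}\leq\delta_{v|\prt{v}}$ are all fine. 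The genuine gap is the lower McCormick inequality $\mu_{C_v}\geq\mu_{\D{v}}+\delta_{v|\prt{v}}-1$, which you yourself flag as ``the crux'' and for which the justification you sketch is wrong. Deducing it from $\mu_{\D{v}}+\delta_{v|\prt{v}}-1\leq\mu_{\D{v}}\,\delta_{v|\prt{v}}$ would require $\mu_{C_v}\geq\mu_{\D{v}}\,\delta_{v|\prt{v}}$, which is false in general: the conditional of $X_v$ given $X_{\D{v}}$ under $\mu_{C_v}$ need not dominate its conditional given the smaller set $\prt{v}$. Likewise the intermediate step ``$1-\mu_{\prt{v}}\geq 1-\mu_{\fa{v}}/\mu_{\prt{v}}$'' asserts $\mu_{\fa{v}}/\mu_{\prt{v}}\geq\mu_{\prt{v}}$, which also fails (take $\mu_{\prt{v}}(x_{\prt{v}})=0.9$ and $\mu_{\fa{v}}(x_{\fa{v}})=0.09$). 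Your detours are red herrings as well: the uniform policy genuinely does not satisfy the lower bound, no $\varepsilon$-clipping is needed, and the fear that $\delta_{v|\prt{v}}=1$ forces a false equality is unfounded (see below).

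The correct verification, which is the paper's, bounds the deficit $\mu_{\D{v}}-\mu_{C_v}$ rather than $\mu_{C_v}$ itself. Writing $\mu_{\fa{v}}$ and $\mu_{\prt{v}}$ for the marginals of $\mu_{C_v}$, for every $x_{C_v}$ with $\mu_{\prt{v}}(x_{\prt{v}})>0$,
\begin{align*}
\mu_{\D{v}}(x_{\D{v}})-\mu_{C_v}(x_{C_v})
&=\sum_{x'_v\neq x_v}\mu_{C_v}(x'_v,x_{\D{v}})
\;\leq\;\sum_{x'_v\neq x_v}\mu_{\fa{v}}(x'_v,x_{\prt{v}})\\
&=\mu_{\prt{v}}(x_{\prt{v}})-\mu_{\fa{v}}(x_{\fa{v}})
=\mu_{\prt{v}}(x_{\prt{v}})\bigl(1-\delta_{v|\prt{v}}(x_v|x_{\prt{v}})\bigr)
\;\leq\; 1-\delta_{v|\prt{v}}(x_v|x_{\prt{v}}),
\end{align*}
where the first inequality marginalizes $\mu_{C_v}$ over $x_{C_v\backslash\fa{v}}$ and the last uses $\mu_{\prt{v}}\leq 1$ together with $\delta_{v|\prt{v}}\leq 1$. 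This is precisely the missing chain; it also disposes of the $\delta_{v|\prt{v}}=1$ worry, since in that case the same estimate yields $\mu_{C_v}=\mu_{\D{v}}$ automatically. With this in place, your construction of $\delta$ and your projection argument for the value of the linear relaxation go through verbatim.
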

\begin{proof}
Let $v$ be a vertex in $\Va$, and let
$$\delta_{v|\prt{v}}(x_{\fa{v}}) = \left\{
\begin{array}{ll}
\frac{\mu_{\fa{v}}(x_{\fa{v}})}{\mu_{\prt{v}}(x_{\prt{v}})} &  \text{ if } \mu_{\prt{v}}(x_{\prt{v}}) \neq 0,
 \\
 \ind_{e_v}(x_v) & \text{otherwise,}
\end{array}
\right.  $$
where $e_v$ is an arbitrary element of $\calX_v$.
To prove the result, we show that \eqref{eq:McCormick} is satisfied for this well-chosen $\delta_{v|\prt{v}}$ and $b_{\D{v}}=1$.

We have
\begin{align*}
\mu_{C_v}(x_{C_v}) - \mu_{\D{v}}(x_{\D{v}})
&\geq \sum_{x'_{C_v \backslash \fa{v}} }\underbrace{\mu_{C_v}(x'_{C_v \backslash \fa{v}},x_{\fa{v}}) - \mu_{\D{v}}(x'_{C_v \backslash \fa{v}},x_{\prt{v}})}_{\leq 0} \\
&= \mu_{\fa{v}}(x_{\fa{v}}) - \mu_{\prt{v}}(x_{\prt{v}}) \\
&= \frac{1}{\mu_{\prt{v}}(x_{\prt{v}})}(\delta_{v|\prt{v}}(x_{\fa{v}}) - 1) \\
&\geq \delta_{v|\prt{v}}(x_{\fa{v}}) - 1
\end{align*}
which yields $\mu_{C_v} \geq \mu_{\D{v}}+ (\delta_{v|\prt{v}} -1) b_{\D{v}}$.

Besides, if $\mu_{C_v}(x_{C_v}) \geq 0$, following the definition of $\delta$ and given that $\mu_{\prt{v}}(x_{\prt{v}})\leq 1$, we have
$$ \delta_{v|\prt{v}}(x_{\fa{v}}) 
% = \frac{\sum_{x'_{V\backslash\fa{v}}} \mu_{C_v}(x_{\fa{v}},x'_{V\backslash\fa{v}})}{\sum_{x'_{V\backslash\prt{v}}} \mu_{C_v}(x_{\prt{v}},x'_{V\backslash\prt{v}})} 
\geq \mu_{\fa{v}}(x_{\fa{v}}) \geq \mu_{C_v}(x_{C_v}),$$
and the constraint $\mu_{C_v} \leq \delta_{v|\prt{v}}  b_{\D{v}}$ is satisfied.

Finally, $\mu_{C_v} \leq \mu_{\D{v}}$ follows from the marginalization constraint $\mu_{\D{v}} = \sum_{x_v} \mu_{C_v}$  in the definition of the local polytope.
\end{proof}

%%%%%%%%%%%%%%%%%%%%%%%%%%%%%%%%%%%%%%%%%%%%%%%%%%%%%%%%%%%%%%%%%%%%%%%%%%%%%%%
\subsubsection{McCormick inequalities with well-chosen bounds are useful}
\label{sub:mccormick_inequalities_with_well_chosen_bounds_are_useful}
%%%%%%%%%%%%%%%%%%%%%%%%%%%%%%%%%%%%%%%%%%%%%%%%%%%%%%%%%%%%%%%%%%%%%%%%%%%%%%%
    % We show in~\cref{thm:linear_relax_graph_interpretation} that solving Problem~\eqref{pb:MILP} with
    % $\overline{\calP}$ can be interpreted as solving the MEU problem over a graph $\overline{G}$.
% \end{rem}
% %%%%%%%%%%%%%%%%%%%%%%%%%%%%%%%%%%%%%%%%%%%%%%%%%%%%%%%%%%%%%%%%%%%%%%%%%%%%%%%
% % \subsection{Linear relaxation quality}
% % \label{sub:linear_relaxation_quality}
% % %%%%%%%%%%%%%%%%%%%%%%%%%%%%%%%%%%%%%%%%%%%%%%%%%%%%%%%%%%%%%%%%%%%%%%%%%%%%%%
% \begin{rem}
% \vlil{Faire la figure et compléter avec l'exemple quasi-egal.}
This section provides examples of IDs where McCormick inequalities~\eqref{eq:McCormick} improves the linear relaxation of MILPs~\eqref{pb:MILP} and \eqref{eq:MILPstrengthened}.

Consider the ID on Figure~\ref{fig:usefulMcCormick}.a,
and assume that we have a bound $\mu_{st} \leq b_{st}$.
Then, the McCormick relaxation of $\mu_{sta} = \mu_{st}\delta_{a|t}$
reads
\begin{equation}
\label{eq:McCormick_bound}
\begin{cases}
\mu_{sta} & \geq \mu_{st} + b_{st}(\delta_{a|t} - 1) \\
\mu_{sta} & \leq b_{st}\delta_{a|t}
\end{cases}
\end{equation}
Suppose that all variables are binary, that $s$ is Bernoulli with parameter $\frac{1}{2}$, that $\bbP(X_t = 1 | X_s) = 1+\varepsilon X_s - \varepsilon(1-X_s)$, that $X_w$ indicates if $X_s = X_a$, and that the objective is to maximize $\bbE_\delta(X_w)$, and has value $\tfrac{1}{2} + \varepsilon$.
An optimal policy consists in choosing $X_a = X_t$.
An optimal solution of the linear relaxation of \eqref{pb:MILP} on $\overline{\calP}$ without McCormick inequalities, has value 1.
Whereas an optimal solution with McCormick inequality and $b_{st}(x_s,x_t) = \tfrac{1}{2} + \varepsilon \ind_{x_s = x_t}$ has value $\tfrac{1}{2} + \varepsilon$.
However, on this simple example, the McCormick inequalities are implied by the valid inequalities of Section~\ref{sec:validCuts}.
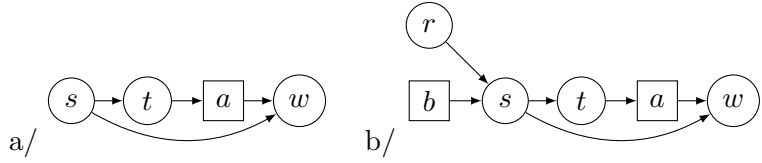
\begin{figure}
\begin{center}
a/
\begin{tikzpicture}
% \node[act] (b) at (-1,0) {$b$};
% \node[sta] (r) at (0,1) {$r$};
% \draw[arc] (b) -- (s);
% \draw[arc] (r) -- (s);
\node[sta] (s) at (0,0) {$s$};
\node[sta] (t) at (1,0) {$t$};
\node[act] (a) at (2,0) {$a$};
\node[sta] (w) at (3,0) {$w$};
\draw[arc] (s) -- (t);
\draw[arc] (t) -- (a);
\draw[arc] (a) -- (w);
\draw[arc] (s) to[bend right] (w);
\end{tikzpicture}
\quad b/
\begin{tikzpicture}
\node[act] (b) at (-1,0) {$b$};
\node[sta] (r) at (-1,1) {$r$};
\draw[arc] (b) -- (s);
% \draw[arc] (r) -- (b);
\draw[arc] (r) to (s);
\node[sta] (s) at (0,0) {$s$};
\node[sta] (t) at (1,0) {$t$};
\node[act] (a) at (2,0) {$a$};
\node[sta] (w) at (3,0) {$w$};
\draw[arc] (s) -- (t);
\draw[arc] (t) -- (a);
\draw[arc] (a) -- (w);
\draw[arc] (s) to[bend right] (w);
\end{tikzpicture}
\end{center}
\caption{ID with useful McCormick inequalities.}
\label{fig:usefulMcCormick}
\end{figure}

This is no more the case on the ID of Figure~\ref{fig:usefulMcCormick}.b, where $r$ is a Bernoulli of parameter $0.5$ and $X_s = X_rX_b + (1-X_r)(1-X_b)$, and the remaining of the parameters are defined as previously. Using the same bounds, this new example leads to exactly the same results as before.

% \inlAP{Check that we obtain the appropriate bound on these examples using the algorithm in the next subsection, and say it if it is true}

%%%%%%%%%%%%%%%%%%%%%%%%%%%%%%%%%%%%%%%%%%%%%%%%%%%%%%%%%%%%%%%%%%%%%%%%%%%%%%%
\subsection{Algorithm to compute good quality bounds}
\label{sub:algorithm_to_choose_good_quality_bounds}
%%%%%%%%%%%%%%%%%%%%%%%%%%%%%%%%%%%%%%%%%%%%%%%%%%%%%%%%%%%%%%%%%%%%%%%%%%%%%%%

% Constraints \eqref{eq:McCormick} for all $v \in \Va$ can be strengthened by using a tighter bound on $\mu_{C_v \backslash \{v\}}$. 

This section provides a dynamic programming equation to compute bounds $b_{\D{v}}$ on $\mu_{\D{v}}$ that are smaller than $1$.
% The goal of this section is to provide a bound $b_v(x_{C_v \backslash \{v\}})$ on $\mu_{C_v \backslash \{v\}}(x_{C_v \backslash \{v\}})$.
Let $\calG$ be a \rjt, and $C_1, \ldots, C_n$ be a topological order on $\calG$. Let $C_{k}$ be a vertex in $\calG$, $C_j$ be the parent of $C_k$ and $C_i$ the parent of $C_j$ ($i < j < k$). If $k=1$, then $C_i=C_j=C_k=C_1$. We introduce the notation $C_j^a = (C_j \backslash (C_i \cup C_k))\cap \Va$.
We define inductively on $k$ the functions $\tilde{b}_k : \calX_{C_k} \rightarrow [0,1]$ as follows. 
% \noindent If $i=1$, then%\vl{$\delta \in ?$}
% \begin{equation}
% \label{eq:b1}
% 	\tilde{b}_{1}(x_{C_1}) = \max_{\delta \in \Delta} \prod_{v \in C_1 \cap \Vs} p(x_v|x_{\prt{v}}) \prod_{v \in C_1 \cap \Va} \delta(x_v | x_{\prt{v}}) = \prod_{v \in C_1 \cap \Vs} p(x_v|x_{\prt{v}}).
% \end{equation}
% % \noindent Therefore, $\tilde{b}_1$ is an upper bound on $\mu_{C_1}$.
% If $i>1$, then suppose that for $k \leq i-1$, there exists an upper bound $\tilde{b}_k : \calX_{C_k} \rightarrow [0,1]$ on $\mu_{C_k}$.
% Without loss of generality, let $C_{k}$ is the predecessor of $C_i$ in $\calG$.
% We define :
\begin{equation}
	% \label{eq:bi}
	\left\{
	\begin{array}{ll}
	\tilde{b}_{1}(x_{C_1}) &= \displaystyle \prod_{v \in C_1 \cap \Vs} p(x_v|x_{\prt{v}}) \\
	% \tilde{b}_{2}(x_{C_2}) = \displaystyle \prod_{v \in C_1 \cap \Vs} p(x_v|x_{\prt{v}})
	\tilde{b}_k(x_{C_k}) &= \displaystyle \bigg( \sum_{x_{\prt{C_j^a}}}\max_{x_{C_j^a}} \sum_{x_{(C_i \cup C_j) \backslash (C_k \cup C_j^a)}} \tilde{b}_{i}(x_{C_{i}}) \prod_{v\in ((C_j \cup C_k) \backslash C_i) \cap \Vs} p(x_v | x_{\prt{v}}) \bigg) \label{eq:bound_res_i}\\
	% \tilde{b}_i(x_{C_i}) &= \displaystyle \bigg( \sum_{x_{\prt{\Va \cap C_{k}} \backslash C_i}}\max_{x_{(\Va \cap C_{k}) \backslash C_i}} \sum_{x_{C_{k-1} \backslash (\fa{C_k \cap \Va} \cup C_i)}} \tilde{b}_{k-1}(x_{C_{k-1}}) \prod_{v\in \offspring{C_k} \cap \Vs} p(x_v | x_{\prt{v}}) \bigg) \label{eq:bound_res_i}\\
	% 					 &\displaystyle \times \prod_{v \in \offspring{C_i} \cap \Vs} p(x_v | x_{\prt{v}}), \nonumber \\
	\text{for }k>1
	\end{array}
	\right.
\end{equation}

% \begin{prop}
% For all $i \in [n]$, $\tilde{b}_i$ is an upper bound on $\mu_{C_i}$
% \end{prop}
% \vlil{Possible reformulation.}
\begin{prop}
Let $\mu$ be in $\calS(G)$.
% Assume that $\mu$ is part of an admissible solution of the NLP 
% Problem~\ref{pb:NLP}, and $b$ is computed according to Equations~\eqref{eq:b1} and~\eqref{eq:bi}. 
We have $\mu_{C_k}(x_{C_k}) \leq \tilde{b}_k(x_{C_k})$ for all $i$ and $x_{C_k}$ in $\calX_{C_k}$.
\end{prop}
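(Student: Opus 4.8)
The plan is to prove this by induction on $k$, following the topological order on $\calG$, and showing at each step that $\tilde{b}_k$ dominates the marginal $\mu_{C_k}$ for any $\mu \in \calS(G)$ and any policy $\delta$ that induces it. The crucial observation is that $\mu \in \calS(G)$ means $\mu$ is the vector of moments of a distribution $\bbP_\delta$ that factorizes on $G$, so each marginal $\mu_{C_k}$ can be expressed in terms of the conditional probabilities $p_{v|\prt{v}}$ (for $v \in \Vs$) and $\delta_{v|\prt{v}}$ (for $v \in \Va$), and the dynamic programming recursion is designed so that we \emph{drop} the $\delta$ factors by replacing them with the worst-case bound $1$, while keeping the $p_{v|\prt{v}}$ factors exactly.

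The base case $k=1$: here $C_1$ is the root of $\calG$. Using Lemma~\ref{lem:rjtFactorization} (or directly the chain rule together with the running intersection / offspring structure), the marginal $\mu_{C_1}$ factorizes as a product over $v \in C_1$ of conditional terms. For $v \in C_1 \cap \Vs$ these are exactly the $p(x_v|x_{\prt{v}})$, while for $v \in C_1 \cap \Va$ they are policy terms $\delta_{v|\prt{v}}$, each bounded by $1$. Hence $\mu_{C_1}(x_{C_1}) \le \prod_{v \in C_1 \cap \Vs} p(x_v | x_{\prt{v}}) = \tilde b_1(x_{C_1})$.

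For the inductive step, suppose the bound holds at $C_i$ (the grandparent of $C_k$). I would express $\mu_{C_k}$ by first writing $\mu_{C_i \cup C_j \cup C_k}$ via the factorization on $G$ restricted to these clusters: starting from $\mu_{C_i}$, multiply by the conditionals $p_{v|\prt{v}}$ or $\delta_{v|\prt{v}}$ for the new vertices $v$ appearing in $C_j$ and $C_k$ (those in $((C_j \cup C_k)\setminus C_i)$), which is legitimate because the running intersection property of the \rjt guarantees that the parents of such $v$ lie in $C_i \cup C_j \cup C_k$ in the right configuration and that $v$'s offspring/descendant structure is compatible with this telescoping. Then marginalize out $x_{(C_i \cup C_j)\setminus C_k}$ to obtain $\mu_{C_k}$. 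Now bound: replace $\mu_{C_i}$ by $\tilde b_i$ using the induction hypothesis; the $\Vs$-conditionals stay; the $\Va$-conditionals $\delta_{v|\prt{v}}$ must be handled. For $v \in C_j^a = (C_j \setminus (C_i \cup C_k)) \cap \Va$, i.e. the decision vertices that appear only in $C_j$, we bound $\sum_{x_{C_j^a}}$ of a product containing $\delta_{v|\prt{v}}$ by $\max_{x_{C_j^a}}$ of the product without the $\delta$ factors — this is valid since $\sum_{x_v} \delta_{v|\prt{v}}(x_v|x_{\prt{v}}) = 1$ and we can push the sum over $x_{C_j^a}$ inside appropriately (the $\max$ over $x_{C_j^a}$ dominates the $\delta$-weighted average, and $\delta$ summing to one lets us collapse the sum to a max over the remaining free variables in $C_j^a$ with the $\prt{C_j^a}$ variables still summed). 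Decision vertices introduced in $C_k$ (i.e. in $C_k \cap \Va$, essentially the offspring $v$ with $C_v = C_k$) simply have their $\delta$ factor bounded by $1$. Matching this up with the recursion~\eqref{eq:bound_res_i} gives $\mu_{C_k}(x_{C_k}) \le \tilde b_k(x_{C_k})$.

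The main obstacle I expect is the bookkeeping of \emph{which} vertices' conditionals appear where, and making the telescoping factorization $\mu_{C_i} \to \mu_{C_i \cup C_j \cup C_k}$ rigorous: one needs to verify, using Proposition~\ref{prop:rjtProperties}, the running intersection property, and the assumption $\offspring{C_v} = \{v\}$, that each newly introduced vertex $v$ has all its parents already present (so $\prt{v} \subseteq C_i \cup C_j \cup C_k$ at the moment it is introduced) and that no vertex of $C_k$ is a descendant of a vertex being marginalized out — exactly the hypotheses of Lemma~\ref{lem:rjtFactorization}. The second delicate point is justifying the exchange of $\sum_{x_{C_j^a}}$ and $\max$: one writes $\sum_{x_{C_j^a}} \prod_{v \in C_j^a}\delta_{v|\prt{v}} \cdot f(x) \le \max_{x_{C_j^a}} f(x) \cdot \sum_{x_{C_j^a}}\prod \delta = \max_{x_{C_j^a}} f(x)$ only when $f$ does not itself depend on the summed-out variables in a way that breaks the inequality — here $f$ collects the $\Vs$-conditionals and $\tilde b_i$, and since the $\prt{C_j^a}$ variables are retained (summed separately, as in~\eqref{eq:bound_res_i}), the inequality goes through. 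Everything else is routine manipulation of sums and products of nonnegative quantities bounded by conditional-probability normalization.
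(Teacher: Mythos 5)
Your proposal is correct and follows essentially the same route as the paper's own proof: the same induction along the topological order of the \rjt, the same telescoping identity expressing $\mu_{C_k}$ from $\mu_{C_i}$ times the conditionals of the vertices in $(C_j\cup C_k)\setminus C_i$, bounding the decision factors appearing in $C_k$ by $1$, invoking the induction hypothesis on $\mu_{C_i}$, and replacing the $\delta$-weighted sum over $x_{C_j^a}$ by $\sum_{x_{\prt{C_j^a}}}\max_{x_{C_j^a}}$ exactly as in the paper. The points you flag as delicate (the factorization being licensed by the \rjt structure and $\offspring{C_v}=\{v\}$, and the sum-to-max exchange with the parent variables retained) are precisely the ones the paper's proof relies on, so no gap remains.
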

As a consequence, $b_{\D{v}}$ defined as $\sum_{x_v}\tilde{b}_{C_v}$ provides an upper bound on $\mu_{\D{v}}$ that can be used in McCormick constraints.

\begin{proof}
We prove the result by induction. Let $(\mu, \delta)$ be a feasible solution of Problem~\ref{pb:NLP} and $C_j$ be the parent of $C_k$ in $\calG$, and $C_i$ the parent of $C_j$ ($i < j < k$). 

\noindent If $k=1$, then the result is obtained by using $\delta_v \leq 1$ for all $v \in \Va$. 

\noindent We assume now that the induction is true until $k > 1$. We have
\begin{align}
\mu_{C_k}(x_{C_k}) &= \sum_{x_{(C_i \cup C_j) \backslash C_k}} \mu_{C_i}(x_{C_i}) \prod_{v \in ((C_j \cup C_k) \backslash C_i) \cap \Vs} p(x_v | x_{\prt{v}}) \prod_{v \in ((C_j \cup C_k) \backslash C_i) \cap \Va} \delta_v(x_v | x_{\prt{v}}) \label{eq:bounds_ex1}\\
				   &\leq \max_{\delta_{(C_j \cup C_k) \backslash C_i}} \sum_{x_{(C_i \cup C_j) \backslash C_k}} \mu_{C_i}(x_{C_i}) \prod_{v \in ((C_j \cup C_k) \backslash C_i) \cap \Vs} p(x_v | x_{\prt{v}}) \prod_{v \in ((C_j \cup C_k) \backslash C_i) \cap \Va} \delta_v(x_v | x_{\prt{v}}) \label{eq:bounds_ex2}\\
				   &\leq \max_{\delta_{C_j \backslash (C_i \cup C_k)}} \sum_{x_{(C_i \cup C_j) \backslash C_k}} \mu_{C_i}(x_{C_i}) \prod_{v \in ((C_j \cup C_k) \backslash C_i) \cap \Vs} p(x_v | x_{\prt{v}}) \prod_{v \in (C_j \backslash (C_i\cup C_k)) \cap \Va} \delta_v(x_v | x_{\prt{v}}) \label{eq:bounds_ex3} \\
				   &\leq \max_{\delta_{C_j \backslash (C_i \cup C_k)}} \sum_{x_{(C_i \cup C_j) \backslash C_k}} b_{C_i}(x_{C_i}) \prod_{v \in ((C_j \cup C_k) \backslash C_i) \cap \Vs} p(x_v | x_{\prt{v}}) \prod_{v \in (C_j \backslash (C_i\cup C_k)) \cap \Va} \delta_v(x_v | x_{\prt{v}}) \label{eq:bounds_ex4}
\end{align}
From \eqref{eq:bounds_ex1} to \eqref{eq:bounds_ex2}, we maximize over the policies in $(C_i \cup C_j) \backslash C_k$. From \eqref{eq:bounds_ex2} to \eqref{eq:bounds_ex3}, we bound all policies in $C_k \cap \Va$ by $1$. Then \eqref{eq:bounds_ex4} is obtained by using the induction assumption. Let $\alpha : \calX_{\fa{C_j^a}} \rightarrow \bbR$ be such that for all $x_{C_j^a} \in \calX_{C_j^a}$,
$$\alpha(x_{C_j^a}) = \sum_{x_{(C_i \cup C_j) \backslash (C_k \cup \fa{C_j^a})}} \tilde{b}_{C_i}(x_{C_i}) \prod_{v \in ((C_j \cup C_k) \backslash C_i) \cap \Vs} p(x_v | x_{\prt{v}}).$$
Then, \eqref{eq:bounds_ex4} becomes
\begin{align}
	\mu_{C_k}(x_{C_k}) &\leq \max_{\delta_{C_j^a}} \sum_{x_{\fa{C_j^a}}} \alpha(x_{\fa{C_j^a}}) \prod_{v \in C_j^a} \delta_v(x_v | x_{\prt{v}}) \label{eq:bounds_ex5}
\end{align}
\noindent Now, we can suppose that $\offspring{C_v} = \{ v\}$. Therefore, $\vert C_j^a \vert \leq 1$ and the maximum above can be decomposed into the sum. 
\begin{align}
	\mu_{C_k}(x_{C_k}) &\leq \sum_{x_{\prt{C_j^a}}}\max_{\delta_{C_j^a}} \sum_{x_{C_j^a}} \alpha(x_{\fa{C_j^a}}) \prod_{v \in C_j^a} \delta_v(x_v | x_{\prt{v}}) \label{eq:bounds_ex6} \\
					   &\leq \sum_{x_{\prt{C_j^a}}}\max_{x_{C_j^a}} \alpha(x_{\fa{C_j^a}}) \label{eq:bounds_ex7}
\end{align}
where from \eqref{eq:bounds_ex6} to \eqref{eq:bounds_ex7} we use a local maximization. Therefore, we obtain the result
\begin{align*}
	\mu_{C_k}(x_{C_k}) &\leq \sum_{x_{\prt{C_j^a}}}\max_{x_{C_j^a}} \sum_{x_{(C_i \cup C_j) \backslash (C_k \cup \fa{C_j^a})}} \tilde{b}_{C_i}(x_{C_i}) \prod_{v \in ((C_j \cup C_k) \backslash C_i) \cap \Vs} p(x_v | x_{\prt{v}}) \label{eq:bounds_ex8}
\end{align*}
\end{proof}
Note that $\tilde{b}_k(x_{C_k})$ is computed via an order two recursion from $\tilde{b}_i(x_{C_i})$ where $i$ is the grand-parent of $k$, which can be generalized to higher order if stricter bound are needed.

\end{document}